\numberwithin{equation}{section}
\def\eps{\varepsilon }
\newcommand\R{\mathbb R}
\newcommand{\Rn}{\mathbb{R}^{n}}
\newcommand{\lver}{\left\lvert}
\newcommand{\rver}{\right\rvert}
\def\eps{\varepsilon}
\newcommand\br{\begin{remark}}
\newcommand\er{\end{remark}}
\newcommand\bp{\begin{pmatrix}}
\newcommand\ep{\end{pmatrix}}
\newcommand\be{\begin{equation}}
\newcommand\ee{\end{equation}}
\newcommand\ba{\begin{equation}\begin{aligned}}
\newcommand\ea{\end{aligned}\end{equation}}
\newcommand{\bap}{\begin{app}}
\newcommand{\eap}{\end{app}}
\newcommand{\begs}{\begin{exams}}
\newcommand{\eegs}{\end{exams}}
\newcommand{\beg}{\begin{example}}
\newcommand{\eeg}{\end{exaplem}}
\newcommand{\bpr}{\begin{proposition}}
\newcommand{\epr}{\end{proposition}}
\newcommand{\bt}{\begin{theorem}}
\newcommand{\et}{\end{theorem}}
\newcommand{\bc}{\begin{corollary}}
\newcommand{\ec}{\end{corollary}}
\newcommand{\bl}{\begin{lemma}}
\newcommand{\el}{\end{lemma}}
\newcommand{\bd}{\begin{definition}}
\newcommand{\ed}{\end{definition}}
\newcommand{\brs}{\begin{remarks}}
\newcommand{\ers}{\end{remarks}}
\newtheorem{theo}{Theorem}[section]
\newtheorem{prop}[theo]{Proposition}
\newtheorem{exams}[theo]{Examples}
\numberwithin{equation}{section}
\newcommand{\TT}{{\mathbb T}}
\newcommand{\const}{\text{\rm constant}}
\newcommand{\Id}{{\rm Id }}
\newcommand{\blockdiag}{{\rm blockdiag }}
\newcommand{\sgn}{\text{\rm sgn}}
\newtheorem{theorem}{Theorem}[section]
\newtheorem{proposition}[theorem]{Proposition}
\newtheorem{corollary}[theorem]{Corollary}
\newtheorem{lemma}[theorem]{Lemma}
\newtheorem{definition}[theorem]{Definition}
\newtheorem{example}[theorem]{Example}
\newtheorem{remark}[theorem]{Remark}
\title{
 Existence and behavior of steady solutions on an interval for general hyperbolic-parabolic systems of conservation laws
}
\author{\sc \small
Benjamin Melinand\thanks{Institut de Recherche Mathématique avancée, UMR 7501, Université de Strasbourg et CNRS, 7 rue René Descartes, 67000
Strasbourg, France;
melinand@unistra.fr
}
and
Kevin Zumbrun\thanks{Indiana University, Bloomington, IN 47405;
kzumbrun@iu.edu: Research of K.Z. was partially supported
under NSF grants no. DMS-1700279, DMS-2154387,and DMS-2206105.
 }}
\begin{document}

\maketitle


\begin{center}
{\bf Keywords}: Steady solutions, hyperbolic-parabolic conservation laws, Evans function.
\end{center}
 
\begin{abstract}
	We study the inflow-outflow boundary value problem on an interval, the analog of
	the 1D shock tube problem for gas dynamics, for general systems of hyperbolic-parabolic
	conservation laws.
	In a first set of investigations, we study existence, uniqueness, and stability,
	showing in particular local existence, uniqueness, and
	stability of small amplitude solutions for general symmetrizable systems.
	In a second set of investigations, we investigate structure and behavior in the small- and large-viscosity
	limits. 
	
	A phenomenon of particular interest is the generic appearance 
	of characteristic boundary layers in the inviscid limit,
	arising from noncharacteristic data for the viscous problem, 
	even of arbitrarily small amplitude.  This induces an interesting new type of ``transcharacteristic'' 
	hyperbolic boundary condition governing the formal inviscid limit.
\end{abstract}

\section{Introduction}\label{s:intro}
In this paper, inspired by recent results in \cite{MelZ,paper1} on the ``1D shock tube problem''
for gas dynamics, i.e., steady flow on a bounded interval, with noncharacteristic inflow/outflow
boundary conditions, we here begin the systematic study of the ``generalized 1D shock problem''
for arbitrary systems of hyperbolic-parabolic conservation laws.

Our goals are two-fold: first, to add context and larger foundation to the somewhat special
analyses of \cite{MelZ,paper1}, and, second, to introduce what seems to be a family
of new and interesting types of hyperbolic and hyperbolic-parabolic problems for general conservation laws.

\subsection{Equations and assumptions}\label{s:eqs}
Following \cite{SZ,Z1}, we consider steady solutions of general viscous conservation laws:
\begin{equation}\label{conservation_law}
	\partial_{t}f^0(U) + f(U)_{x} = \left(B(U) U_{x} \right)_{x} \text{ , } 0<x<1 \text{ , } t>0,
\end{equation}
together with the boundary conditions
\begin{equation}\label{BC2}
U(0) = U_{0} = \begin{pmatrix} U_{0I} \\ U_{0II} \end{pmatrix} \text{  and  } U_{II}(1) = U_{1II}.
\end{equation}
In this paper we assume that  $U\in \mathcal{U}\subset \R^n$,
\begin{equation*}
U = \begin{pmatrix} U_{I} \\ U_{II} \end{pmatrix}, \, 
f^0 = \begin{pmatrix} f^0_{I} \\ f^0_{II} \end{pmatrix}, \,
f = \begin{pmatrix} f_{I} \\ f_{II} \end{pmatrix} \,
	\in \R^{r} \times \R^{n-r} \text{ , } 
	B = \begin{pmatrix} 0 & 0 \\ 0 & B_{22} \end{pmatrix} \text{ , } B_{22} \in M_{n-r}(\R),
\end{equation*}
and $df^0$ is invertible and lower block triangular, without loss of generality
\begin{equation*}
	df^0(U) = \begin{pmatrix} \Id_r & 0\\ (df^0_{II})_I& (df^0_{II})_{II} \end{pmatrix}
\end{equation*}
with $\det ((df^0_{II})_{II}) >0$. We also make the following assumptions:

\medskip
\medskip
\noindent (\textbf{H0}) $f$ and $B$ are smooth.

\medskip
\medskip
\noindent (\textbf{H1}) The eigenvalues of $(df^0_{II})_{II}^{-1}(U) B_{22}(U) + ((df^0_{II})_{II}^{-1}(U) B_{22}(U))^{T}$ are positive for any $U$.

\medskip
\medskip
\noindent (\textbf{H2}) 
The eigenvalues of $(df_{I})_{I}(U)$ are real and positive for any $U$. 

\medskip
\medskip

\noindent Condition (\textbf{H1}) corresponds to strict parabolicity of \eqref{conservation_law} with
respect to $U_{II}$, consistent with Dirichlet boundary conditions on $U_{II}(0), U_{II}(1)$. 
Note that $B_{22}(U)$ is necessary invertible and $\det B_{22}(U)>0$ for any $U$ (since $\det ((df^0_{II})_{II}) >0$).
The first part of Condition (\textbf{H2}) corresponds to hyperbolicity
of \eqref{conservation_law} with respect to variable $U_I$.
The second part of Condition (\textbf{H2})
means that the flow moves from the left to the right, so that the hyperbolic part of the flow, $U_I$,
is completely entering the domain at the left boundary,
consistent with Dirichlet boundary conditions on $U_I(0)$.

In the following, we denote 
$A^0(U)=df^0(U)$, $A(U)=df(U)$, with
\begin{equation*}
A^0 = \begin{pmatrix} \Id_r & 0 \\ A^0_{21} & A^0_{22} \end{pmatrix},\quad 
A = \begin{pmatrix} A_{11} & A_{12} \\ A_{21} & A_{22} \end{pmatrix} \text{ , } A_{11} \in M_{r}(\R).
\end{equation*}

\subsection{Main results}\label{s:main}

In Section \ref{s:existence}, we categorize essentially completely existence, uniqueness and stability of
small-amplitude steady solutions for general viscous conservation laws \eqref{conservation_law} under steadily
increasing assumptions, culminating with the physically natural assumption of existence of a convex
entropy.  We discuss afterward in a more partial and speculative way the issues of global existence,
uniqueness, and stability, that is, the situation for {\it large-amplitude solutions}.
In particular, we outline a strategy based on entropy dissipation and Brouwer degree for 
global uniqueness of constant solutions and global existence of large-amplitude systems, generalizing
and illuminating the one introduced in \cite{paper1} in the context of gas dynamics.

In Section \ref{s:asymptoticI}, we give a complete study of structure and behavior of steady solutions in the scalar case in the
small viscosity limit. 

In Section \ref{s:asymptoticII}, we turn to the study of structure and behavior of steady solutions, in the
small- and large-viscosity limits. 
Even for the simple case of isentropic gas dynamics, there is a rich ``zoo'' of possible solution patterns
in the inviscid limit similar to what we observe in Section \ref{s:asymptoticI}, featuring standing shocks, left and right boundary layers, and, the most novel feature,
a new type of characteristic boundary layer appearing not as a boundary case, but {\it generically} in
``rarefying,'' or ``expansive'' solutions.
These induce in the formal inviscid limit a new type of ``transcharacteristic'' boundary condition
accomodating perturbations on either side of a characteristic state, hence requiring different numbers
of boundary conditions; see Section \ref{s:hypdesc}.
This phenomenon occurs for {\it any} rarefying steady solution, even of arbitrarily
small amplitude, hence must be dealt with in order to produce a hyperbolic description of dynamics.

\subsection{Discussion and open problems}\label{s:disc}
A very interesting finding of \cite{paper1} is that nonuniqueness may hold for systems with convex entropy,
in particular even for gas dynamics with an artificially devised equation of state.
Yet, it appears (numerically) to hold for the standard polytropic equation of state.
A very interesting open problem is to verify this analytically.
Likewise, the proof or disproof of existence for more general equations of state, or more general equations such
as magnetohydrodynamics (MHD) are important open problems.
Existence of large-amplitude multi-D solutions is a challenging more long term goal, even for gas dynamics,
that appears to require further ideas for its resolutions.

As regards structure and asymptotic behavior, a very interesting open problem is to determine the possible
feasible hyperbolic configurations for general systems, similarly as done here for isentropic gas dynamics.
The understanding of hyperbolic behavior for the limiting zero-viscosity dynamics is another challenging direction.
The handling of characteristic boundary layers, and resulting transcharacteristic type boundary conditions is
a particularly intriguing piece of this puzzle.
Similarly, the understanding of large-viscosity behavior for general systems is another interesting direction for 
study.

The most intriguing open problem is one that has not been addressed at all here, namely structure of the 
corresponding {\it multi-D solutions} in the inviscid limit.
This could be thought of as a combination of compressible Pouseille flow and the 1-D shock problem;
the results should be very interesting indeed.
In particuar, even for small data, for which multi-D existence and uniqueness are known 
\cite{KK}, the question of structure seems not to have yet been addressed.
An especially interesting question seems to be what is the role in multi-d of the characteristic boundary layer
configurations we have seen here in 1-D.
Presumably such configurations are there in the construction of \cite{KK}, but their
numerical and asymptotic description is still wanting.


\medskip

{\bf Acknowledgement.} We thank Blake Barker for many helpful discussions during the closely related
project \cite{paper1}, and for his generous aid both numerical and analytical.

\section{Existence, uniqueness and spectral stability}\label{s:existence}

\subsection{The linear case}\label{s:linear}
We assume in this part that $df$ and $B$ are both constant. We have the following proposition.

\begin{prop}\label{steadyprop}
If $df$ and $B$ are both constant and conditions (\textbf{H0})-(\textbf{H2}) are satisfied, Problem \eqref{conservation_law} has a unique steady state that satisfies the boundary condition \eqref{BC2} if and only if 
\begin{equation}\label{speccond}
\sigma \left(B_{22}^{-1} \left(A_{22} - A_{21} A_{11}^{-1} A_{12} \right)\right) \cap 2i\pi \mathbb{Z} \backslash \{ 0 \} = \emptyset.
\end{equation}
\end{prop}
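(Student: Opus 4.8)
The plan is to reduce the two-point boundary-value problem to a single square linear algebraic system for the unknown slope $(U_{II})_x(0)$ and then to read off unique solvability from the spectrum. Since we seek steady states the time derivative drops, and because $df=A$ and $B$ are constant the profile equation is $A U_x = B U_{xx}$, i.e.
\begin{equation*}
\begin{pmatrix} A_{11} & A_{12} \\ A_{21} & A_{22}\end{pmatrix}\begin{pmatrix}(U_I)_x\\(U_{II})_x\end{pmatrix}=\begin{pmatrix}0&0\\0&B_{22}\end{pmatrix}\begin{pmatrix}(U_I)_{xx}\\(U_{II})_{xx}\end{pmatrix}.
\end{equation*}
The top block is the first-order constraint $A_{11}(U_I)_x + A_{12}(U_{II})_x = 0$; since (\textbf{H2}) forces $A_{11}$ invertible, I would solve $(U_I)_x = -A_{11}^{-1}A_{12}(U_{II})_x$ and integrate, so that $U_I$ is determined by $U_{II}$ together with the datum $U_I(0)=U_{0I}$ of \eqref{BC2}.

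Substituting this relation into the bottom block eliminates $U_I$ and yields the closed reduced system $B_{22}(U_{II})_{xx} = (A_{22}-A_{21}A_{11}^{-1}A_{12})(U_{II})_x =: M(U_{II})_x$, where $M$ is exactly the Schur complement appearing in \eqref{speccond}. Setting $V:=(U_{II})_x$ and $P:=B_{22}^{-1}M$ (the inverse exists by (\textbf{H1})), this becomes $V_x = PV$, whence $V(x)=e^{xP}V(0)$ and
\begin{equation*}
U_{II}(x) = U_{0II} + \Big(\int_0^x e^{sP}\,ds\Big)V(0).
\end{equation*}
With all remaining data $U_{0I},U_{0II}$ fixed by \eqref{BC2}, the only free parameter is $V(0)\in\R^{n-r}$, and the single remaining condition $U_{II}(1)=U_{1II}$ collapses to the square system $\big(\int_0^1 e^{sP}\,ds\big)V(0) = U_{1II}-U_{0II}$. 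Thus a unique steady state exists for every choice of boundary data if and only if $G:=\int_0^1 e^{sP}\,ds$ is invertible.

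It then remains to translate invertibility of $G$ into \eqref{speccond}. I would note that $G=g(P)$ for the entire function $g(z)=\int_0^1 e^{sz}\,ds=\sum_{k\ge0}z^k/(k+1)!$, which equals $(e^z-1)/z$ for $z\neq0$ and satisfies $g(0)=1$. By the spectral mapping theorem $\sigma(G)=g(\sigma(P))$, so $G$ is invertible precisely when $g$ does not vanish on $\sigma(P)$; and $g(z)=0$ holds exactly when $e^z=1$ with $z\neq0$, i.e. $z\in 2i\pi\ZZ\setminus\{0\}$. Hence $G$ is invertible iff $\sigma(P)\cap 2i\pi\ZZ\setminus\{0\}=\emptyset$, which is \eqref{speccond}.

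The argument is essentially mechanical once the reduction is in place, so I expect only mild obstacles. The point requiring care is verifying that the map $V(0)\mapsto(\text{solution of the BVP})$ is a genuine bijection, so that non-invertibility of $G$ really destroys existence and uniqueness rather than being an artifact of the elimination — in particular that the recovered $U_I$ matches the imposed datum and that no solutions are lost in passing to the reduced system. The spectral-mapping step is then immediate, with the harmless case $z=0$, where $g(0)=1\neq0$, explaining why $0$ is excluded from \eqref{speccond}.
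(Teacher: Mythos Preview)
Your proof is correct and follows essentially the same route as the paper: both reduce to the Schur complement matrix $P=\tilde A=B_{22}^{-1}(A_{22}-A_{21}A_{11}^{-1}A_{12})$ and show that unique solvability is equivalent to invertibility of $\int_0^1 e^{sP}\,ds$. The only difference is cosmetic---the paper parametrizes by the integration constant $C_2$ rather than the initial slope $V(0)$, and it checks invertibility of the integral by splitting off the zero-eigenvalue Jordan block explicitly, whereas your spectral-mapping argument with $g(z)=(e^z-1)/z$, $g(0)=1$, handles all eigenvalues at once and is a bit cleaner.
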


\begin{remark}
	Condition \eqref{speccond} is a compatibility condition between the parabolic and the hyperbolic part. A similar condition was assumed for the study of quasilinear noncharacteristic boundary layers (on the half-line) in, for instance, \cite[Lemma 5.1.3]{M1} or \cite{M2}. For example, the following system does not satisfy Condition \eqref{speccond} 
	$$
	U_{t} + \bp 0 & 2\pi\\ -2\pi & 0 \ep U_{x} = U_{xx}  \text{ , } 0<x<1,
	$$
	and any constant state $\hat{U}$ must satisfy $\hat{U}(0)=\hat{U}(1)$.
\end{remark}

\begin{proof}[Proof of Proposition \ref{steadyprop}]
We can rewrite the problem as
\begin{equation*}
\left\{
\begin{array}{l}
A_{11} U_{I}' + A_{12} U_{II}'=0,\\
A_{21} U_{I}' + A_{22} U_{II}'=B_{22} U_{II}''.
\end{array}
\right.
\end{equation*}
Then, integrating, we get 
\begin{equation*}
\left\{
\begin{array}{l}
A_{11} U_{I} + A_{12} U_{II}=C_{1},\\
A_{21} U_{I} + A_{22} U_{II} + C_{2} =B_{22} U_{II}'
\end{array}
\right.
\end{equation*}
where $C_{1}=A_{11} U_{I0} + A_{12} U_{II0}$ and $C_{2}$ is a constant that has to be determined. 
	Then, since $A_{11}$ is invertible, we obtain
\begin{equation}\label{varprof}
\left\{
\begin{array}{l}
U_{I}' = A_{11}^{-1} C_{1} - A_{11}^{-1} A_{12} U_{II},\\
U_{II}' = B_{22}^{-1} (A_{22} - A_{21} A_{11}^{-1} A_{12}) U_{II} + B_{22}^{-1}(C_{2}+A_{21}A_{11}^{-1}C_{1}).
\end{array}
\right.
\end{equation}
Denoting $\tilde{A}=B_{22}^{-1} (A_{22} - A_{21} A_{11}^{-1} A_{12})$ and $\tilde{C}=B_{22}^{-1}(C_{2}+A_{21}A_{11}^{-1}C_{1})$, we solve
\begin{equation*}
\left\{
\begin{array}{l}
U_{II}' = \tilde{A} U_{II} + \tilde{C}\\
U_{II}(0)=U_{II0}.
\end{array}
\right.
\end{equation*}
We decompose $\tilde{A}$ as $\tilde{A} = P^{-1} \begin{pmatrix} F_{1} & 0 \\ 0 & F_{2} \end{pmatrix} P$ where $P$ and $F_{2}$ are invertible and $F_{1}$ is strictly upper triangular.
	For example, $\blockdiag\{F_1,F_2\}$ could be taken to be a Jordan form for $\tilde A$, 
	with $F_1$ the $0$-generalized eigenspace part. We get that
\begin{equation*}
U_{II}(1) = e^{\tilde{A}} U_{II}(0)+P^{-1} \begin{pmatrix} \int_{0}^{1} e^{sF_{1}} ds & 0 \\ 0 & F_{2}^{-1} \left(e^{F_{2}} - I \right) \end{pmatrix} P \tilde{C}.
\end{equation*}
Thus, we see that the map $C_{2} \to U_{II}(1)$ is invertible if and only if $\sigma (\tilde{A}) \cap 2i\pi \mathbb{Z} \backslash \{ 0 \}= \emptyset$.
\end{proof}

\subsection{Almost-constant steady states}

We next study the existence of steady states for system \eqref{conservation_law}-\eqref{BC2} for data lying near a constant state satisfying \eqref{speccond}. We seek solutions $\widehat{U}$ of
\begin{equation}\label{steady_conservation_law}
(f(\widehat{U}))_{x} = \left(B(\widehat{U}) \widehat{U}_{x} \right)_{x} \text{ , } 0< x < 1 \text{ , } \widehat{U}(0) = U_{0} \text{  ,  } \widehat{U}_{II}(1) = U_{1II}.
\end{equation}
Similarly as done for gas dynamics in \cite{MelZ,paper1}, for $U_{0}$ and $U_{1II}$ fixed, we define the map
\begin{equation*}
\Psi : C_{2} \in \R^{n-r}  \to U_{II}(1)-U_{1II} \in \mathbb{R}^{n-r}
\end{equation*}
such that the maximal solution $U$ of the ODE
\be\label{ode_general}
B_{22}(U) U_{II}' = f_{II}(U) - f_{II}(U_{0}) +  B_{22}(U_{0}) C_{2} \text{ , } U_{II}(0)=U_{0II} \text{  ,  } f_{I}(U)=f_{I}(U_{0})
\ee
is defined on $[0,1]$. Thus, profiles are equivalent to roots $C_{2}$ of $\Psi$.

\bpr\label{existence_almost_steady}
Let $U_{0} \in \Rn$ and assume conditions (\textbf{H0})-(\textbf{H2}) are satisfied. Assume that Condition \eqref{speccond} is satisfied for $A=df(U_{0})$ and $B=B(U_{0})$. There exists $\delta>0$ and $\eps>0$ such that for any $U_{1II}$ with $\lver U_{0II}-U_{1II} \rver \leq \delta$, there exists a unique solution $\hat{U}$ of \eqref{steady_conservation_law} satisfying
\begin{equation*}
\lver \hat{U}_{II}'(0) \rver\leq \eps.
\end{equation*}
Moreover, the solution is nondegenerate: i.e., corresponds to a nondegenerate root of $\Psi$.
\epr

\br 
We do not claim that for $U_{1II}$ close enough to $U_{0II}$, there exists a unique solution of \eqref{steady_conservation_law}. The previous theorem only gives local uniqueness, 
even for small data.
\er 

\begin{proof}[Proof of Proposition \ref{existence_almost_steady}]
	Let us fix $U_{0} \in \Rn$, taking without loss of generality $f_I(U_0)=0$. 
	First, we notice that for $U_{1II}=U_{0II}$, $\hat{U} \equiv U_{0}$ is a solution of \eqref{steady_conservation_law}. Then, by continuous dependence on parameters on the ODE, (\textbf{H2}) and the implicit function theorem on the constraint $f_{I}(U_{I},U_{II})=0$, for $(U_{1II},C_{2})$ close enough to $(U_{0II},0)$, one can express $U_{I}$ as a function of $U_{II}$ and the maximal solution $U$ of \eqref{ode_general} is defined on $[0,1]$. Therefore, we can define the map $\Psi$ on a neighborhood $\mathcal{V}$ of $0$ in $\R^{n-r}$. The function $\Psi$ is $\mathcal{C}^{1}$ on this domain and $\Psi(0)=0$. Then, for any $D \in \mathbb{R}^{n-r}$, $d \Psi(0) \cdot D = V_{II}(1)$, with $V_{II}$ solving the variational equation
\begin{equation*}
B_{22}(U_{0}) V_{II}' = (A_{22} - A_{21} A_{11}^{-1} A_{12})(U_{0}) V_{II} + C_{2} + (A_{21}A_{11}^{-1})(U_{0})C_{1} \text{  ,  } V_{II}(0)=0,
\end{equation*} 
with $C_{1} = A_{11}(U_{0}) U_{I0} + A_{12}(U_{0}) U_{II0}$. As in the proof of Proposition \ref{steadyprop}, we can solve this ODE and, using  Condition \eqref{speccond}, we obtain that $d \Psi(0)$ is invertible. The result then
follows from the implicit function theorem, as does nondegeneracy.
\end{proof}

\subsubsection{The Evans function}\label{s:evans}
Linearizing \eqref{conservation_law} around a steady solution $\hat U$, we obtain eigenvalue equations
\begin{equation}\label{eval}
	\lambda A^0 V + (AV)_{x} = \left(B V_{x} \right)_{x} \text{ , } 0<x<1  \text{ , } t>0,
\end{equation}
$A^0$, $A$, $B$ depending on $x$, with homogeneous boundary conditions $U(0)=0$, $U_{II}(1)=0$.
In particular, $A^0_{11}=I_{r}$, $A^0_{12}=0$,
$A_{11}=df_I(\hat U)$ is invertible, $A_{12}=df_{II}(\hat U)$, and $B=B(\hat U)$ with $B_{22}$ invertible.
Thus, denoting $d/dx$ as $'$, we may rewrite \eqref{eval} as 
a first-order system 
\ba\label{firstorder}
\bp A_{11}V_I \\ V_{II} \\ B_{22} V_{II}'\ep'= \bp 
- \lambda A_{11}^{-1} & - A_{12}' & - A_{12}B_{22}^{-1}\\
0 & 0 & B_{22}^{-1}\\
 \lambda (A^0_{21} - A_{21} A_{11}^{-1}   )A_{11}^{-1} + \alpha & 
\lambda A_{22}^{0}+\beta
& \gamma \ep \bp A_{11}V_I \\ V_{II} \\ B_{22} V_{II}'\ep
\ea
with
\[
\alpha= A_{21}' A_{11}^{-1} + A_{21}(A_{11}^{-1})', \beta= A_{22}' - A_{21}A_{11}^{-1}A_{12}', \gamma= (A_{22} -  A_{21}A_{11}^{-1} A_{12})B_{22}^{-1}.
\]

The {\it Evans function} may thus be defined via a ``shooting'' construction, 
similarly as in \cite{AGJ,GZ} for the whole line or \cite{R,SZ} for the half-line case, as
\be\label{evans}
D(\lambda):= \det ( \mathcal{Z}_1, \dots, \mathcal{Z}_{2n-r})|_{x=1},
\ee
where $\mathcal{Z}_1, \dots, \mathcal{Z}_{n-r}$ are solutions with data $\mathcal{Z}_j(0)=\bp 0,0, B_{22}e_j^{n-r}\ep$
prescribed at $x=0$ and 
$\mathcal{Z}_{n-r+1}, \dots, \mathcal{Z}_{n}$ 
and 
$\mathcal{Z}_{n+1}, \dots, \mathcal{Z}_{2n-r}$ 
are solutions with data 
$$
\mathcal{Z}_{n-r+j}(1)=\bp A_{11} e^r_j,0,0\ep,\quad j=1,\dots, r
$$
and
$ \mathcal{Z}_{n+j}(1)=\bp 0,0, B_{22}e_j^{n-r}\ep$, $j=1,\dots, n-r$
prescribed at $x=1$ and where $e^{p}_{j}$ is the j-th element of the standard basis of $\R^{p}$. Evidently, $D(\lambda)$ vanishes if and only if there exists a solution to the eigenvalue equation, hence {\it spectral stability is equivalent to nonvanishing of the Evans function on $\Re \lambda \geq 0$.}

\subsubsection{Stability index, uniqueness, and Zumbrun-Serre/Rousset lemma}
Clearly $D$ is real-valued for real $\lambda$.
It is readily seen (see, e.g., \cite{SZ,Z1} in the half-line case) 
that $D(\lambda)\neq 0$ for $\lambda$ real and sufficiently large, hence we may define as in \cite{GZ}
the {\it Stability index}
\begin{equation}\label{stabind}
\mu:= \sgn D(0)\left( \lim_{\lambda \to +\infty_{real}}\sgn D(\lambda) \right).
\end{equation}
The index $\mu$ determines the parity of the number of roots of the Evans function with positive real part, 
with $+1$ corresponding to ``even'' and $-1$ to ``odd''.
Thus, $\mu=+1$ is a {\it necessary condition for stability}.
The following result analogous to the Zumbrun-Serre/Rousset lemmas of \cite{ZS,R} in the whole- and half-line case,
relates low-frequency stability $D(0)\neq 0$ and stability index information to transversality of the steady profile
solution of the standing-wave ODE (cf. \cite[Lemma 6.1]{paper1} for gas dynamics).

\bl\label{ZSlem}
The zero-frequency limit $D(0)$ is equal to $\det B_{2,2}$ multiplied by the
Jacobian determinant $\det (d\Psi(C_2))$ associated with problem \eqref{ode_general} evaluated at any root $C_2$;
in particular,
\be\label{sgns}
\sgn D(0)= \sgn \det d\Psi(C_2).
\ee
\el

\begin{proof}
	This follows from the observation that for $\lambda=0$ the eigenvalue equation reduces to the 
	variational equation for \eqref{ode_general} for the steady profile, with $C_1=0$ and $C_2=B_{22}U_{II}'(0)$
	imposed by the homogeneous boundary conditions at $x=0$.
	Noting that $d\Psi(C_2)$ is equal to the value of the solution $U_{II}(1)$ of \eqref{varprof}
	with matrix-valued data $\tilde C_2=C_2=\Id_{n-r}$, whereas $D(0)$ is equal to the determinant
	of the solution with matrix-valued data $U_{II}'(0)=B_{22}\Id_{n-r}$, we have 
	$D(0)=\det B_{22} \det d\Psi(C_2)$, giving the result by $\det B_{22}>0$.
\end{proof}

\subsection{Symmetrizable systems}\label{s:symm}
The spectral condition \eqref{speccond} is satisfied for many physical systems, in particular
ones that are {\it symmetrizable} in the sense of \cite{KSh}. We first have the following technical lemma.

\begin{lemma}
If $\tilde{A}$ is symmetric and $\tilde{B}+\tilde{B}^{T}>0$, then $\sigma(\tilde{B}^{-1} \tilde{A}) \cap i \mathbb{R} \subset \{ 0 \}$.
\end{lemma}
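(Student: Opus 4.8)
The claim is that if $\tilde A$ is symmetric and $\tilde B + \tilde B^t > 0$, then $\sigma(\tilde B^{-1}\tilde A) \cap i\mathbb{R} \subset \{0\}$. The plan is to argue by contradiction: suppose $\tilde B^{-1}\tilde A$ has a purely imaginary eigenvalue $i\omega$ with $\omega \in \mathbb{R}$, $\omega \neq 0$, and associated (possibly complex) eigenvector $v \neq 0$. Then $\tilde A v = i\omega \tilde B v$. The strategy is to extract a real scalar identity from this complex relation by pairing with $v$ and using the hypotheses on $\tilde A$ and $\tilde B$, and to show the identity forces $\omega = 0$.

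Concretely, first I would take the Hermitian inner product of $\tilde A v = i\omega \tilde B v$ with $v$, obtaining $v^* \tilde A v = i\omega\, v^* \tilde B v$. Since $\tilde A$ is symmetric and real, $v^* \tilde A v$ is real (it equals its own conjugate, because $\overline{v^* \tilde A v} = v^t \tilde A \bar v = (v^* \tilde A v)^*$ using $\tilde A = \tilde A^t$). For the right-hand side, I would split $\tilde B v$ into its symmetric-part contribution: writing $v^* \tilde B v = \tfrac12 v^*(\tilde B + \tilde B^t)v + \tfrac12 v^*(\tilde B - \tilde B^t)v$, the first term is real and strictly positive (by $\tilde B + \tilde B^t > 0$ and $v \neq 0$), while the second term, coming from the skew-symmetric part, is purely imaginary. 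Thus $v^*\tilde B v = p + iq$ with $p > 0$, and the equation reads $v^*\tilde A v = i\omega(p + iq) = -\omega q + i\omega p$.

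The key step is then to match real and imaginary parts. Since the left side $v^*\tilde A v$ is real, the imaginary part of the right side must vanish: $\omega p = 0$. Because $p > 0$, this forces $\omega = 0$, contradicting $\omega \neq 0$. Hence no nonzero purely imaginary eigenvalue exists, which is exactly $\sigma(\tilde B^{-1}\tilde A)\cap i\mathbb{R} \subset \{0\}$. I should note that $\tilde B$ is invertible under the hypothesis $\tilde B + \tilde B^t > 0$ — indeed if $\tilde B w = 0$ for real $w \neq 0$ then $w^t(\tilde B + \tilde B^t)w = 2w^t \tilde B w = 0$, contradicting positive definiteness — so $\tilde B^{-1}\tilde A$ is well defined and the eigenvalue formulation is legitimate.

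I do not anticipate a serious obstacle here; the argument is a standard energy/quadratic-form manipulation. The only point requiring slight care is the reality of $v^* \tilde A v$ and the clean separation of the real and imaginary parts of $v^*\tilde B v$ for a \emph{complex} eigenvector, since $v$ need not be real even though $\tilde A, \tilde B$ are; this is precisely where symmetry of $\tilde A$ (guaranteeing $v^*\tilde A v \in \mathbb{R}$) and positivity of $\tilde B + \tilde B^t$ (guaranteeing $\Re\, v^*\tilde B v > 0$) are both used essentially.
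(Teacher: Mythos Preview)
Your proof is correct and follows essentially the same approach as the paper: both arguments pair the eigenvalue relation with the eigenvector, use symmetry of $\tilde A$ to make one side real, split $v^*\tilde B v$ into contributions from the symmetric and skew-symmetric parts of $\tilde B$, and derive a contradiction from positivity of $\tilde B+\tilde B^t$. The only cosmetic difference is that the paper writes the relation as $\tilde B v = i\tau \tilde A v$ and concludes $v=0$, whereas you write $\tilde A v = i\omega \tilde B v$ and conclude $\omega=0$; these are equivalent.
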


\begin{proof}
\noindent If $\tilde{B}v = i \tau \tilde{A}v$ for $\tau \neq 0$, we get
\begin{equation*}
2i\tau \langle v,\tilde{A}v \rangle = \langle v, (\tilde{B}+\tilde{B}^{T})v\rangle + \langle v, (\tilde{B}-\tilde{B}^{T})v\rangle.
\end{equation*}
and since $\tilde{B}+\tilde{B}^{T}>0$, $v=0$.
\end{proof}

We recall that \eqref{conservation_law} is said to be symmetrizable  if:

\medskip
\medskip
\noindent (\textbf{H3}) There exists a smooth map $S:U \in \R^{n} \mapsto S(U)$ such that, for any $U \in \R^{n}$, 
$S(U)A^0(U)$ is symmetric positive definite, $S(U)= \begin{pmatrix} S_{11}(U) & 0 \\ 0 & S_{22}(U) \end{pmatrix}$, $S(U)A(U)$ is symmetric and $S_{22}(U)B_{22}(U) + (S_{22}(U)B_{22}(U))^{T}>0$.
\medskip
\medskip

Then we have the following useful lemma.

\begin{lemma}\label{symlem}
Under assumption (\textbf{H3}), $\sigma \left(B_{22}^{-1} \left(A_{22} - A_{21} A_{11}^{-1} A_{12} \right)\right) \cap i \mathbb{R} \subset \{ 0 \}$.
\end{lemma}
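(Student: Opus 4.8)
The plan is to reduce Lemma~\ref{symlem} to the preceding technical lemma by exhibiting a symmetrizer for the Schur-complement matrix $A_{22}-A_{21}A_{11}^{-1}A_{12}$. Under \textbf{(H3)} we have a block-diagonal symmetrizer $S=\operatorname{diag}(S_{11},S_{22})$ with $SA$ symmetric and $S_{22}B_{22}+(S_{22}B_{22})^t>0$. The idea is to set $\tilde B:=S_{22}B_{22}$, so that $\tilde B+\tilde B^t>0$ is exactly the hypothesis of the technical lemma, and then to show that $\tilde A:=S_{22}(A_{22}-A_{21}A_{11}^{-1}A_{12})$ is symmetric. Once both hold, the technical lemma gives $\sigma(\tilde B^{-1}\tilde A)\cap i\mathbb R\subset\{0\}$. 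Since
\[
\tilde B^{-1}\tilde A = (S_{22}B_{22})^{-1}S_{22}(A_{22}-A_{21}A_{11}^{-1}A_{12}) = B_{22}^{-1}(A_{22}-A_{21}A_{11}^{-1}A_{12}),
\]
this is precisely the spectrum in the statement, and the conclusion follows.

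The main work is verifying that $\tilde A=S_{22}(A_{22}-A_{21}A_{11}^{-1}A_{12})$ is symmetric. I would extract this from the symmetry of the full matrix $SA$. Writing out $SA$ blockwise using the block-diagonal form of $S$, symmetry of $SA$ is equivalent to the three relations $S_{11}A_{11}=(S_{11}A_{11})^t$, $S_{22}A_{22}=(S_{22}A_{22})^t$, and the off-diagonal coupling $S_{11}A_{12}=(S_{22}A_{21})^t=A_{21}^tS_{22}$. Using these, I would compute
\[
(S_{22}A_{21}A_{11}^{-1}A_{12})^t = A_{12}^t A_{11}^{-t}A_{21}^t S_{22},
\]
and rewrite the factors via the coupling relation $A_{21}^tS_{22}=S_{11}A_{12}$ and $A_{12}^t=(S_{11}^{-1}A_{21}^tS_{22})^{?}$, turning the transpose of $S_{22}A_{21}A_{11}^{-1}A_{12}$ back into itself. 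The symmetry of $S_{22}A_{22}$ handles the remaining term, so $\tilde A$ is symmetric.

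The hard part will be the algebra in the off-diagonal coupling: one must be careful that $A_{11}$ is symmetrized by $S_{11}$ (so $A_{11}^{-1}$ interacts correctly with $S_{11}$), and that the cross-relation $S_{11}A_{12}=A_{21}^tS_{22}$ is used in both directions to collapse $A_{12}^tA_{11}^{-t}A_{21}^t S_{22}$ into $S_{22}A_{21}A_{11}^{-1}A_{12}$. Concretely, $A_{11}^{-t}=S_{11}^{-1}(S_{11}A_{11})^{-1}\cdot S_{11}=S_{11}A_{11}^{-1}S_{11}^{-1}$ since $S_{11}A_{11}$ is symmetric, and substituting this together with $A_{12}^tS_{11}=S_{22}A_{21}$ (the transpose of the coupling) and $S_{11}A_{12}=A_{21}^tS_{22}$ yields exactly $S_{22}A_{21}A_{11}^{-1}A_{12}$. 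Thus both pieces of $\tilde A$ are symmetric, and invoking the technical lemma completes the proof.
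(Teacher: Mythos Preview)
Your proposal is correct and matches the paper's approach: set $\tilde B=S_{22}B_{22}$ and $\tilde A=S_{22}(A_{22}-A_{21}A_{11}^{-1}A_{12})$, show $\tilde A$ is symmetric from the block relations encoded in $SA=(SA)^t$, and apply the technical lemma. The paper compresses the symmetry check into the manifestly symmetric rewriting $S_{22}A_{21}A_{11}^{-1}A_{12}=(S_{11}A_{12})^{t}(S_{11}A_{11})^{-1}(S_{11}A_{12})$ (via $S_{22}A_{21}=(S_{11}A_{12})^{t}$ and $A_{11}^{-1}=(S_{11}A_{11})^{-1}S_{11}$), which has the added virtue of not using $S_{22}=S_{22}^{t}$---a property \textbf{(H3)} does not assert, so your coupling relation should strictly read $A_{21}^{t}S_{22}^{t}=S_{11}A_{12}$, after which your computation goes through unchanged.
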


\begin{proof}
\noindent We note first that by assumption $S_{22}A_{21}=(S_{11}A_{12})^{T}$. Then we write
$$
B_{22}^{-1} \left(A_{22} - A_{21} A_{11}^{-1} A_{12} \right) = (S_{22} B_{22})^{-1} \left(S_{22} A_{22} - (S_{11}A_{12})^{T} (S_{11}A_{11})^{-1} S_{11}A_{12} \right) 
$$
and the result follows from the previous lemma.
\end{proof}

\bt\label{symmthm}
For symmetrizable systems under Conditions (\textbf{H0})-(\textbf{H3}), almost-constant solutions of almost 
constant data exist and are locally unique, nondegenerate, and spectrally stable.
\et

\begin{proof}
The existence, local uniqueness, and nondegeneracy are immediate consequences of Proposition 
	\ref{existence_almost_steady} and Lemma \ref{symlem}.
Furthermore, one can easily adapt \cite[Prop. 3.2]{MelZ} and prove that the spectrum of the linearized operator about a steady state $\hat{U}$ only contains eigenvalues. We then consider the eigenvalue problem
\begin{equation*}
	\lambda  A^0(\widehat {U})V + (A(\widehat{U}) V)_{x} = \left(B(\widehat{U}) V_{x} + dB(\widehat{U}) (V) \hat{U}_{x} \right)_{x} \text{ , } V(0) = 0\text{  ,  } V_{II}(1) = 0.
\end{equation*}
If $\hat{U} \equiv U_{0}$, one can check that
\begin{equation*}
	\begin{aligned}
		\Re(\lambda) \left(S(U_0)A^0(U_{0}) V,V \right)_{L^{2}(0,1)} &+  \left(S_{22}(U_{0}) B_{22}(U_{0})V_{IIx}, V_{IIx} \right)_{L^{2}(0,1)}  \\
		&\quad + \frac{1}{2} |\sqrt{S_{11}(U_{0}) A_{11}(U_{0})} V_{I}(1)|^{2} = 0.
	\end{aligned}
\end{equation*}
Noting, since $A_{11}(U)$ is symmetric for the inner product associated to $S_{11}(U)$, that Conditions (\textbf{H2})-(\textbf{H3}) give $S_{11}(U_{0})A_{11}(U_{0})>0$, we thus have that $U_{0}$ is spectrally stable. 
	
Then, for almost-constant steady states (meaning $\hat{U}_{x}$ small enough), we use an appropriate Goodman-type estimate. We note that there exists a constant $C>0$, such that for any $\alpha \geq 0 $ and weight $\varphi(x)=e^{-\alpha x}$, we have

\begin{align*}
	\Re(\lambda) &\left( \varphi  S(\hat{U}) A^0(\hat {U}) V,V \right)_{L^{2}} +\frac12 \varphi(1) \left| \sqrt{S_{11}(\hat{U}) A_{11}(\hat {U})} V_{I}(1) \right|^{2}\\
	&\leq -\frac{\alpha}{2} \left( \varphi S(\hat{U}) A(\hat{U}) V, V \right)_{L^{2}} + C  | \hat{U} |_{L^{\infty}} \left( \varphi V, V \right)_{L^{2}} + \frac{\alpha^2}{2} \left( \varphi S(\hat{U}) B(\hat{U}) V, V \right)_{L^{2}}  \\
	&- \left( \varphi S(\hat{U}) B(\hat{U}) V_{x}, V_{x} \right)_{L^{2}} + C  | \hat{U} |_{L^{\infty}} (1+ \alpha) |\sqrt{\varphi} V_{IIx} |_{L^{2}} | \sqrt{\varphi} V_{II} |_{L^{2}}.
\end{align*}
Then, since the system is symmetrizable, there exists some constants $a,b>0$ such that
\begin{equation*}
	\left[ -\frac{\alpha}{2} \varphi S(\hat{U}) A(\hat{U}) + C  | \hat{U} |_{L^{\infty}} \varphi I_{n} + \frac{\alpha^2}{2} \varphi S(\hat{U}) B(\hat{U}) \right]  \leq \left( -\frac{a \alpha }{2} + C  | \hat{U} |_{L^{\infty}} + \frac{b \alpha^2}{2} \right) \varphi I_{n}.
\end{equation*}
We choose $\alpha>0$ so that $-\frac{a \alpha}{2} +  C  | \hat{U} |_{L^{\infty}} + \frac{b \alpha^{2}}{2} = -\frac{a \alpha}{4}$ (which is always possible for $| \hat{U} |_{L^{\infty}}$ small enough). Thus we get
\begin{align*}
	\Re(\lambda) \left( \varphi  S(\hat{U}) A^0(\hat {U}) V,V \right)_{L^{2}} \leq &-\frac{a\alpha}{4} | \sqrt{\varphi} V |_{L^{2}}^2  - \left( \varphi S(\hat{U}) B(\hat{U}) V_{x}, V_{x} \right)_{L^{2}}\\
	 &+ C  | \hat{U} |_{L^{\infty}} (1+\alpha)  | \sqrt{\varphi} V_{IIx} |_{L^{2}} | \sqrt{\varphi} V_{II} |_{L^{2}}.
\end{align*}
Using Young's inequality together with the fact that $| \hat{U} |_{L^{\infty}}$ is small enough, we obtain
\begin{equation*}
	\Re(\lambda) \left( \varphi  S(\hat{U}) A^0(\hat {U}) V,V \right)_{L^{2}} \leq -\frac{a\alpha}{8} |\sqrt{\varphi} V |_{L^{2}}^{2}.
\end{equation*}
Thus, we may again conclude that $\hat{U}$ is spectrally stable. See  \cite[Section 3]{MelZ} for similar computations in the isentropic case.
Alternatively, one could conclude by continuous dependence on coefficients of solutions of the eigenvalue ODE,
as in \cite{Z1,GMWZ10}.
\end{proof}

\br
Note that, contrary to the whole line situation (see for instance \cite{Kaw_thesis,KSh,Z3}),
we do not assume Kawashima's genuine coupling condition \cite{Kaw_thesis} in Theorem \ref{symmthm}. The main reason behind 
this is that a steady state $\hat{U}$ of a purely hyperbolic system ($B\equiv 0$) on an interval under assumptions (\textbf{H0}),(\textbf{H2}),(\textbf{H3}) is stable, 
as in this case $f=f_I$ and $f_I(U_0)=f_I(\hat U)$ enforces $\hat U$
constant, and also all characteristics move from left to right, sweeping perturbations out of the domain,
with full Dirichlet conditions at the left boundary.
Even better, any solution of Problem \eqref{conservation_law}-\eqref{BC2} initially close  enough to $\hat{U}$ is equal to $\hat{U}$ after a finite time.
Thus, the hyperbolic part of the system need not be coupled to the parabolic part in order to achieve
stability, but is stable even by itself.
\er

\begin{remark}\label{nonlinear_stab_remark}
Using the same kind of energy estimates, one can also prove the \emph{nonlinear stability} of 
an almost-constant steady state. See \cite[Section 6]{MelZ} for similar considerations 
in the isentropic gas dynamic case.
\end{remark}

Combining the argument of \ref{steadyprop} with the results of Theorem \ref{symmthm},
we may deduce not only nonvanishing of $D(0)/\mu$, $d\Psi$ 
but useful sign information, included here for definiteness.

\begin{proposition} \label{dPsi_eval_rmk}
For arbitrary solutions of symmmetrizable systems, 
\be\label{symm_mu}
\mu=\sgn D(0)=\sgn \det d\Psi(C_2).
\ee
For constant solutions of symmmetrizable systems, corresponding to $C_2=C_2^*$,
\be\label{symmcomp}
\mu=\sgn \det d\Psi(C^*_2)=+1.
\ee
\end{proposition}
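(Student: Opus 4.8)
The plan is to prove the two displayed identities separately, reducing both to the sign of the Jacobian determinant $\det d\Phi$ and to the high-frequency sign $\lim_{\lambda\to+\infty}\sgn D(\lambda)$. The right-hand equality in \eqref{symm_mu}, $\sgn D(0)=\sgn\det d\Phi(C_2)$, is precisely Lemma \ref{ZSlem} (there written with $d\Psi$), so no new argument is needed for it. Everything then reduces to showing (i) $\det d\Phi>0$ for constant solutions, (ii) $\mu=+1$ for constant solutions, and (iii) the remaining equality $\mu=\sgn D(0)$, equivalently $\lim_{\lambda\to+\infty}\sgn D(\lambda)=+1$, for arbitrary solutions.

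For (i) and the first equality of \eqref{symmcomp}, I would compute $d\Phi(C_2^*)$ explicitly. Exactly as in the proof of Proposition \ref{existence_almost_steady}, differentiating $\Phi$ in $C_2$ at a constant state reduces to the linear variational equation $B_{22}(U_0)V_{II}'=(A_{22}-A_{21}A_{11}^{-1}A_{12})(U_0)\,V_{II}+D$ of Proposition \ref{steadyprop}, whose solution with $V_{II}(0)=0$ is $V_{II}(1)=f(\tilde A)B_{22}(U_0)^{-1}D$, where $\tilde A=B_{22}^{-1}(A_{22}-A_{21}A_{11}^{-1}A_{12})$ is evaluated at $U_0$ and $f(z)=\frac{e^z-1}{z}$ is entire, has real Taylor coefficients, and satisfies $f(0)=1$. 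Hence $\det d\Phi(C_2^*)=\big(\prod_k f(\nu_k)\big)\det B_{22}(U_0)^{-1}$, the product running over the eigenvalues $\nu_k$ of $\tilde A$. Each real $\nu_k$ contributes $f(\nu_k)=\frac{e^{\nu_k}-1}{\nu_k}>0$; each complex-conjugate pair contributes $f(\nu)f(\bar\nu)=|f(\nu)|^2>0$, which is nonzero precisely because Lemma \ref{symlem} excludes any nonzero purely imaginary eigenvalue of $\tilde A$ (in particular any $\nu\in 2i\pi\mathbb{Z}\setminus\{0\}$); and $\det B_{22}(U_0)^{-1}>0$ by the symmetrizability condition (\textbf{H3}), exactly as in Lemma \ref{ZSlem}. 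Therefore $\det d\Phi(C_2^*)>0$.

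For (ii) I would invoke Theorem \ref{symmthm}: the constant solution is spectrally stable, so the eigenvalue equation has no root with $\Re\lambda\ge 0$; the number of unstable eigenvalues is zero, hence even, so by the parity interpretation of the stability index \eqref{stabind} we get $\mu=+1$. Combined with (i) and Lemma \ref{ZSlem} this yields \eqref{symmcomp}. Moreover, since $\mu=\sgn D(0)\cdot\lim_{\lambda\to+\infty}\sgn D(\lambda)$ and here both $\mu=+1$ and $\sgn D(0)=\sgn\det d\Phi(C_2^*)=+1$, I can read off $\lim_{\lambda\to+\infty}\sgn D(\lambda)=+1$ in the constant case.

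It remains to prove (iii), and this is the hard part. I would argue that $\lim_{\lambda\to+\infty}\sgn D(\lambda)$ depends only on the principal (leading-$\lambda$) part of the first-order system \eqref{firstorder} at the two endpoints: after rescaling into boundary-layer coordinates, the fast hyperbolic modes (growth rate $O(\lambda)$, governed by $A_{I,I}(\hat U)>0$ through (\textbf{H2})) and the parabolic modes (growth rate $O(\sqrt\lambda)$, governed by $B_{II,II}(\hat U)>0$ and $\beta$ through (\textbf{H3})) decouple, and the Evans determinant factors to leading order into nonzero contributions from $x=0$ and $x=1$ that depend continuously on the endpoint data $(\hat U(0),\hat U(1))$. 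Being a nonvanishing sign that varies continuously, it is locally constant as the endpoint data range over $\mathcal{U}\times\mathcal{U}$ within the symmetrizable class; connecting $(\hat U(0),\hat U(1))$ to $(U_0,U_0)$ and using the constant-solution value from (ii) gives $\lim_{\lambda\to+\infty}\sgn D(\lambda)=+1$, whence $\mu=\sgn D(0)$ and \eqref{symm_mu} follows. The genuine obstacle is this high-frequency, consistent-splitting analysis — establishing both the solution-independence and the positivity of the high-frequency sign — which is the interval analogue of the half-line high-frequency Evans estimates of \cite{SZ,Z1}, and is exactly where the structural hypotheses (\textbf{H1})--(\textbf{H3}) are needed to keep the splitting nondegenerate throughout the homotopy.
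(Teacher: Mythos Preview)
Your proof is correct and follows the paper's overall architecture---link $D(0)$ to $\det d\Phi$ via Lemma~\ref{ZSlem}, compute the sign at the constant solution, invoke spectral stability (Theorem~\ref{symmthm}) for $\mu=+1$ there, then propagate the high-frequency sign to general profiles---but two steps are carried out differently. For the constant-solution computation, the paper homotopes $B_{22}$ to a multiple of the identity within the symmetrizable class, so that $\tilde A$ acquires real semisimple spectrum and $\sgn\det\int_0^1 e^{\tilde A(1-s)}\,ds$ becomes a product of manifestly positive scalars; your direct argument via $f(z)=(e^z-1)/z$ and the conjugate pairing of complex eigenvalues of the real matrix $\tilde A$ is cleaner and avoids the homotopy entirely, with Lemma~\ref{symlem} entering only to exclude zeros of $f$ on $i\R\setminus\{0\}$. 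For the extension to arbitrary profiles, the paper does less than you: it merely cites the standard high-frequency nonvanishing $D(\lambda)\neq 0$ for large real $\lambda$ (from \cite{SZ,Z1}) and then homotopes in the parameter $C_2$, so that the real-valued, nonvanishing $\sgn D(\lambda)|_{\lambda\gg1}$ is continuous in $C_2$ and hence constant on the connected domain, equal to its value $+1$ at $C_2^*$. Your route---arguing first that the high-frequency sign depends only on endpoint data and then homotoping the endpoints---reaches the same conclusion but asserts a stronger intermediate fact than is needed; the paper's homotopy-in-$C_2$ argument is more economical.
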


\begin{proof}
From the argument of Proposition \ref{steadyprop}, we find that the profile map $\Psi:C_2\to U_{II}(1)$
satisfies $\det d\Psi(C_2^*)=\det \int_0^1 e^{\tilde A(1-s)}ds\neq 0$ at the value $C_2^*$ corresponding to a constant
solution, where $\tilde{A}=B_{22}^{-1} (A_{22} - A_{21} A_{11}^{-1} A_{12})$, and thus is invariant under
homotopy within the class of symmetrizable systems, hence may take $B_{22}$ without loss of generality to
be a multiple of the identity, hence, by symmetrizability, $\tilde A$ has real semisimple eigenvalues
$\tilde \alpha_1, \dots, \tilde \alpha_{n-r}$.
Diagonalizing, we have by direct computation that 
$\sgn \det d\Psi(C_2^*)=\sgn \det \int_0^1 e^{\tilde A(1-s)}ds= \Pi_{j=1}^{n-r}\sgn \int_0^1 e^{\alpha_j(1-s)}ds=1$,
hence $\sgn \det d\Psi(C^*_2)=\sgn D(0)=+1.$

Using spectral stability, $D(\lambda)\neq0$ together with real-valuedness of the Evans function when restricted to
the real axis, we may extend by a further homotopy in $\lambda$ to conclude that the limit of $\sgn D(\lambda)$
	as $\lambda \to \infty$, real, is also $+1$.  Summarizing, we have \eqref{symmcomp}.
By standard arguments \cite{SZ,Z1}, either WKB-type or using energy estimates, one has for general
solutions of symmetrizable systems that $D(\lambda)\neq 0$ for $\lambda$ real and large,
	hence $\lim_{\lambda \to +\infty} \sgn D(\lambda) =+1$, giving \eqref{symm_mu}, by homotopy in $C_2$. 
\end{proof}

\subsection{Systems with convex entropy}\label{s:entropy}
A system \eqref{conservation_law} is said to have a convex entropy \cite{Kaw_thesis,KSh} if it has
an entropy/entropy flux pair $(\eta, q)(f^0):\R^n\to\R^{2}$ such that 
\begin{equation}\label{ent1}
	\frac{d^2\eta}{(df^0)^2} >0 \text{  ,  } (d\eta/df^0) (df/df^0)= dq/df^0, 
\end{equation}
and
\begin{equation}\label{ent2}
	(df^0)^{T} \frac{d^2 \eta}{(df^0)^2}  B+ \Big( (df^0)^{T} \frac{d^2 \eta}{(df^0)^{2}}  B\Big)^t \geq 0,
\end{equation}
with equality only on  $\ker B$.
It is a theorem of \cite{KSh} that existence of a convex entropy implies symmetrizability, i.e.,
reducibility by coordinate change to a system satisfying (\textbf{H3}). Thus, we may deduce local 
uniqueness information for systems with a convex entropy already
by reference to Theorem \ref{symmthm}.

\subsubsection{Global uniqueness for constant data}
Arguing directly, we may obtain under a mild additional assumption, much more.
Namely, assume as holds for most physical systems that
\be\label{fsymm}
\hbox{\rm $(df_I)_I$ symmetric (hence positive definite).}
\ee
For gas dynamics, magnetohydrodynamics, and elasticity, $(df_I)_I = u\Id_r$ and so \eqref{fsymm} is
trivially satisfied.
By $\langle h, f_I(W+h, U_{II})-f_I(W)\rangle=\langle h,  \big(\int (df_I)_I(W+\theta h)d\theta \big) h\rangle>0$, 
this yields the global solvability property
\be\label{solve}
\hbox{\rm For fixed $U_{II}$, $(df_I)(\cdot, U_{II})$ is (globally) one-to-one.}
\ee

Alternatively, we may take $(df_I)_I+ (df_I)_I^t>0$,
or just impose \eqref{solve} without reference to \eqref{fsymm}.
Then, we have the following \emph{global} uniqueness result, for constant data only.

\bt\label{entthm}
For systems \eqref{conservation_law}-\eqref{BC2} with a global convex entropy and 
satisfying (\textbf{H0})-(\textbf{H3}) and \eqref{solve}, 
solutions of \eqref{steady_conservation_law} for constant data $U_{0II}=U_{1II}$
are \emph{globally unique}, nondegenerate (full rank), and spectrally stable, 
consisting exclusively of constant states.
\et

\begin{proof}
	Following \cite{La}, we obtain multiplying \eqref{conservation_law} by $d\eta/df^0$
	and using \eqref{ent1}(ii) the equation
$$
	\eta_t + q_x= (d\eta/ df^0) (B U_x)_x = ((d\eta/ df^0) BU_x)_x 
	- \langle df_0 U_x, \frac{d^2 \eta}{ (df^0)^2} U_x\rangle .
	$$
	By \eqref{ent2}, we have 
	$ \langle df_0 U_x, \frac{ d^2\eta}{(df^0)^2} B U_x\rangle \geq 0$ with equality if and only if $BU_x=0$.
Thus, integrating the steady equation from $x=0$ to $x=1$, we obtain
\be\label{keyeq}
(q(U) - d\eta(U) B(U)U')|_0^1\leq 0,
\ee
with equality if and only if 
	$BU'\equiv 0$ or equivalently $U_{II}'\equiv 0$.

On the other hand,
	integrating the $U_I$ equation, we have $f_I(U)\equiv \const$, whence, by (\textbf{H2}) and \eqref{solve},
$U_I(0)=U_I(1)$, and so $U(0)=U(1)$.  Thus, $q(U)|_0^1$ vanishes in \eqref{keyeq}.
At the same time, by addition of an arbitrary linear function, we may take $\eta$ without loss of generality
to satisfy $d\eta(U(0))=d\eta(U(1))=0$, whence the entire lefthand side of \eqref{keyeq} vanishes.
	Then, equality holds in \eqref{keyeq} and so we must have $U_{II}'\equiv 0$ and therefore
	$U_{II}\equiv  U_{II}(0)$ and $f_I\equiv f_I(U(0))$.
Applying (\textbf{H2}) and the implicit function theorem, we find that we may solve for a unique value
	of $U_I$ in a neighborhood of $U_I(0)$ as a function of the constants $U_{II}$ and $f_I(U)$.
	Since $U_I(x)$ varies continuously starting at $U_I(0)$, it can thus never escape this local neighborhood,
	and so $U_I(x)\equiv U_I(0)$ as well.
	This gives global uniqueness of $U\equiv U(0)$.
Nondegeneracy and spectral stability follow by Theorem \ref{symmthm}.
\end{proof}

\br\label{symmrmk}
The assumption \eqref{fsymm} seems possibly related to the circle of ideas around entropy and symmetrizability.
It would be very interesting to identify sharp criteria for \eqref{fsymm} assuming existence of
a complex entropy; however, we have not found such.
\er

\subsubsection{Global existence for general data}
Following \cite{paper1}, define the feasible set $\mathcal{C}$ as the connected component of the set of parameters $C_2$
corresponding to constant solutions of the open set of $C_2$ for which the solution of \eqref{ode_general}
is defined on $[0,1]$ and remains in the interior of its domain of definition $\mathcal{U}$.
If $\Psi$ is ``proper'' in the sense that the inverse image in $\mathcal{U}$ of a compact set
in $\mathcal{C}$ is compact, then we may conclude general existence from the special uniqueness result of
Theorem \ref{entthm}.

\begin{corollary}\label{gexist}
	For systems possessing a convex entropy for which $\Psi$ is proper in the above sense with repect to 
	$\mathcal{U}$, $\mathcal{C}$, there exists a steady profile solution for any data $U(0), U_2(1)$.
\end{corollary}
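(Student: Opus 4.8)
The plan is to recast existence as a surjectivity statement for the profile map and then compute a Brouwer degree, using the assumed properness to make the degree well defined and Theorem~\ref{entthm} together with Proposition~\ref{dPsi_eval_rmk} to evaluate it. Fix $U_0$ (equivalently $C_1$) and, following the discussion preceding the corollary, introduce the profile map $\Psi:\mathcal C\to\R^{n-r}$, $\Psi(C_2):=U_{II}(1)$, where $U$ is the maximal solution of \eqref{ode_general}; since $\Phi(U_{1II},C_2)=\Psi(C_2)-U_{1II}$, a steady profile with right-boundary datum $U_{1II}$ exists precisely when $U_{1II}\in\Psi(\mathcal C)$. Thus it suffices to show that $\Psi$ is surjective. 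Now $\mathcal C$ is by construction an open connected subset of $\R^{n-r}$, hence a smooth oriented $(n-r)$-manifold without boundary of the same dimension as the target; $\Psi$ is $C^1$ by smooth dependence of ODE solutions on parameters; and the properness hypothesis of the corollary ensures that $\Psi$ is proper (preimages of compact sets are compact). Consequently the integer Brouwer degree $\deg\Psi$ for proper maps between oriented manifolds of equal dimension is well defined, is computed at any regular value $y$ by $\sum_{C_2\in\Psi^{-1}(y)}\sgn\det d\Psi(C_2)$, is independent of the regular value because the target $\R^{n-r}$ is connected, and has the property that $\deg\Psi\neq0$ forces $\Psi$ to be onto.

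I would then evaluate $\deg\Psi$ at the distinguished constant-data value $y_0:=U_{0II}$. Since a convex entropy implies symmetrizability, Theorem~\ref{entthm} applies and gives that for $U_{1II}=U_{0II}$ the only solution of \eqref{steady_conservation_law} is the constant state $U\equiv U_0$; hence $\Psi^{-1}(y_0)=\{C_2^*\}$, the single parameter attached to that constant solution. Theorem~\ref{entthm} further asserts nondegeneracy, so $d\Psi(C_2^*)=d_{C_2}\Phi(C_2^*)$ is invertible and $y_0$ is a regular value with exactly one preimage. By Proposition~\ref{dPsi_eval_rmk}, formula \eqref{symmcomp}, that preimage carries sign $\sgn\det d\Phi(C_2^*)=+1$, whence
\be
\deg\Psi=\sum_{C_2\in\Psi^{-1}(y_0)}\sgn\det d\Psi(C_2)=\sgn\det d\Phi(C_2^*)=+1\neq0.
\ee

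With $\deg\Psi=+1$, properness and the connectedness of $\R^{n-r}$ yield surjectivity of $\Psi$: every regular value is attained by the degree count, and every critical value is attained by the usual limiting argument (approximate it by regular values via Sard's theorem; their preimages lie in a fixed compact subset of $\mathcal C$ by properness, and a convergent subsequence produces a preimage of the critical value). Hence for each right-boundary datum $U_{1II}$ there is some $C_2\in\mathcal C$ with $\Psi(C_2)=U_{1II}$, which by the equivalence above yields a steady profile solving \eqref{steady_conservation_law} with data $(U_0,U_{1II})$; as $U_0$ was arbitrary, existence holds for all data.

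The main obstacle is the degree bookkeeping at the noncompact boundary of $\mathcal C$: because $\mathcal C$ is open rather than compact and $\Psi$ need not extend to its closure, the single-point computation at $C_2^*$ equals the \emph{global} degree only if no preimage can escape to $\partial\mathcal C$ --- the locus where the profile reaches $\partial\mathcal U$ or fails to exist on all of $[0,1]$ --- as the target is moved across $\R^{n-r}$. This escape is exactly what the assumed properness forbids, so, the hypothesis granted, the stability of the degree and the passage from the constant-data count to global surjectivity are automatic; the real content in any concrete application is the verification of properness itself. A secondary care point is to insist on a genuine regular value of a map into a connected target, so that the value $+1$ supplied by Proposition~\ref{dPsi_eval_rmk} indeed controls all data.
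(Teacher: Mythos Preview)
Your proof is correct and follows essentially the same route as the paper's: both arguments compute the Brouwer degree of the profile map on $\mathcal C$ (your $\Psi$ differs from the paper's $\Phi(U_{1II},\cdot)$ only by the additive constant $U_{1II}$), evaluate it at the constant-data target $U_{0II}$ using Theorem~\ref{entthm} for uniqueness and Proposition~\ref{dPsi_eval_rmk} for the sign $+1$, and then invoke properness/homotopy invariance to propagate the nonzero degree to all targets. Your presentation is slightly more explicit about the regular-versus-critical value dichotomy and the manifold framework for the degree, but the substance is identical.
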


\begin{proof}
	The map $\Psi$ is continuous on $\mathcal{C}$, by continuous dependence of solutions of ODE,
	For $\Psi$ proper in the above sense, one may thus define the Brouwer degree 
	$d(\Psi, \mathcal{C}, U_{II}(1))$ for each target image $U_{II}(1)$, considering left data $U(0)$
	as a fixed parameter \cite{Brouwer1911,Mil1965,Hi1976,Pra2006,DincaMawhin2021}.
	Recall that Brouwer degree of a proper map is invariant under homotopy, and, for regular
	values $U_{II}(1)$, for which $\Psi^{-1}$ consists of a finite set of isolated nondegenerate roots,
	is given by
\be\label{regdegree}
	d(\Psi,\mathcal{C},U_{II}(1)):=\sum_{C_2 \in \Psi^{-1}(U_{II}(1))} \sgn \det d\Psi(C_2). 
\ee
	This includes the case $\Psi^{-1}(U_{II}(1))=\emptyset$, for which
	$d(\Psi,\mathcal{C},U_{II}(1))=0$, hence nonzero Brouwer degree implies existence of a solution.

	By homotopy invariance, the degree may be computed at any such $U_{II}(1)$, in particular the
	value $U_{II}^*(1)=U_{II}(0)$ to which Theorem \ref{entthm} applies. For this value, the inverse image
	of $\Psi$ consists of a single $C_2^*$ corresponding to a constant solution.
	Moreover, by Proposition \ref{dPsi_eval_rmk}, $C_2^*$ is an isolated nondegenerate root,
	with $\sgn \det d\Psi(C_2^*)=+1$, hence by \eqref{regdegree}
	$d(\Psi,\mathcal{C},U_{II}^*(1))=+1$.  By homotopy invariance, therefore,
	$d(\Psi,\mathcal{C},U_{II}(1))=+1$ for any $U_{II}(1)\in \mathcal{U}^{int}$.
	implying existence of a solution by nonvanishing of the degree.
\end{proof}

\br\label{proper_rmk}
The condition that $\Psi$ be proper is quite strict in practice, as solutions may escape to infinity, etc.
It was verified in \cite{paper1} for the compressible Navier--Stokes equations with polytropic
equation of state.  It is a very interesting open question whether it holds for a general convex equation of state,
or for more complicated physical systems such as viscoelasticity and  magnetohydrodynamics (MHD).
\er

\br\label{elliprmk}
It is worth noting that under solvability assumption \eqref{solve}, 
the shock tube problem (namely the Navier-Stokes equations in a tube) reduces to a nonlinear elliptic problem in $U_{II}$, with 
Dirichlet boundary conditions, to which all of the theory of global existence and uniqueness
for such may be applied.
However, up to now we have not been able to make use of this connection; rather our studies of specific
systems seem to be an interesting source of examples for elliptic theory.
\er

\subsubsection{Uniqueness and spectral stability}
For large-amplitude solutions, neither uniqueness nor spectral stability appear to follow from the existence
of a convex entropy.  Indeed, numerical computations of \cite{paper1} indicate that for the common example of
the compressible Navier--Stokes equation with an artificially chosen convex equation of state- hence
possessing a convex entropy- both failure of local uniqueness, and failure of spectral stability may occur,
with steady and Hopf bifurcations.

\subsection{Conclusions}
For small-amplitude data, local existence, uniqueness, and spectral stability may be deduced 
from the standard structural assumptions of symmetrizability. 
For systems with convex entropy, we obtain, further, from global uniqueness of solutions of constant data.
{Provided the mapping $\Psi$ may to shown to be proper} (in the sense described above), we obtain as a corollary
large-amplitude existence; however, this appears possibly quite restrictive.
The (numerical) gas-dynamical examples of \cite{paper1} suggest that
large-amplitude uniqueness and spectral stability {\it in general do not hold} for systems possessing a convex 
entropy.
Thus, these questions must apparently be addressed either by problem-specific analysis, asymptotic limit, or
numerical Evans function evaluation as in \cite{MelZ,paper1}

\section{Asymptotic limits I : the inviscid limit in the scalar case}\label{s:asymptoticI}

In this section we study the behavior of steady solutions of scalar viscous conservation laws. More precisely, we consider the problem

\begin{equation}\label{steady_scalar}
\left\{
\begin{array}{l}
\eps \hat u^{\eps}_{xx} =(f(\hat u^{\eps}))_{x} \text{  ,   } 0<x<1,\\
\hat u^{\eps}(0)=u_{0} \text{  ,  } \hat u^{\eps}(1)=u_{1},
\end{array}
\right.
\end{equation}
for $u_{0},u_{1} \in \R$ and $\eps>0$. We will assume in this section that 
\begin{enumerate}
  \item[(A0)] $f$ is a $\mathcal{C}^{2}$ strictly convex function satisfying $f(0)=f'(0)=0$.
\end{enumerate}
We will also assume sometimes that
\begin{enumerate}
  \item[(A1)] $f''(0) \neq 0$.
\end{enumerate}

The second assumption is not necessary but we will suppose it sometimes by simplicity when we will deal with characteristic boundary layers and double boundary layers. Our goal is to give a description of these steady solutions especially when $\eps$ goes to $0$. First, we give an existence result.

\bpr\label{existence_steady}
Assume that $f$ is $\mathcal{C}^{1}(\R)$. For any $\eps>0$ and any $u_{0},u_{1} \in \R$, there exists a unique solution of \eqref{steady_scalar}. 
\epr

In the following, we will denote by $\hat u^{\eps}$ the unique solution of \eqref{steady_scalar}. 

\begin{proof}
The arguments used are based on the proof of Proposition 2.1 in \cite{MelZ}. We introduce the ODE with parameter $b \in \R$
\[
\eps y_{b}' =b + f(y_{b}) \text{  ,  } y_{b}(0)=u_{0}
\]
and denote by $y_{b}$ the unique maximal solution. Note that $y_{b}$ is monotonic. We define the set $I_{u_{0}}$ as follows: $b \in I_{u_{0}}$ if and only if $y_{b}$ is defined on $[0,1]$. The set $I_{u_{0}}$ is open and $-f(u_{0}) \in I_{u_{0}}$. Furthermore, by the comparison theorem for ODE, if $b_{1}<b_{2}$, then $y_{b_{1}}<y_{b_{2}}$ on the intersection of the domains of definition of the two functions. Therefore $I_{u_{0}}$ is an interval. Then, we introduce the map
\[
\phi : b \in I_{u_{0}} \mapsto y_{b}(1)-u_{1}.
\]
The comparison theorem for ODE asserts that $\phi$ is increasing. Furthermore, $\phi$ is not bounded from below and not bounded from above.  Therefore, there exists a unique $b \in I_{u_{0}}$ such that $\phi(b)=u_{1}$.
\end{proof}

The purpose of this section is to describe the behavior of $\hat u^{\eps}$. If $u_{0}=u_{1}$ then $\hat u^{\eps}$ is constant. If $u_{0}>u_{1}$, $\hat u^{\eps}$ is decreasing and we talk about compressive waves. If $u_{0}<u_{1}$, $\hat u^{\eps}$ is increasing and we talk about expansive waves.

Before that, we would like to give a motivation of studying such solutions. We will show that there are spectrally stable (and then nonlinearly stable by adapting Section 6 of \cite{MelZ}). We introduce the unbounded operator $\mathcal{L}$ on $L^{2}(0,1)$ with domain $\mathcal{D}(\mathcal{L}) = H^{2}(0,1) \cap H^{1}_{0}(0,1)$ as 
\[
\mathcal{L} v =  \eps v'' - (f'(\hat u^{\eps}) v)'.
\] 
We denote by $\sigma(\mathcal{L})$ the spectrum of $(\mathcal{L},\mathcal{D}(\mathcal{L}))$.

\bl\label{stab_steady}
Assume that $f$ is $\mathcal{C}^{2}(\R)$. Let $\eps>0$ and $u_{0},u_{1} \in \R$. The inverse of $\mathcal{L}$ exists and is compact. The set $\sigma(\mathcal{L})$ only contains eigenvalues.
\el

\begin{proof}
The arguments used are based on the proof of Proposition 3.2 in \cite{MelZ}. For any $g \in L^{2}(0,1)$ the system 
\[
\left\{
\begin{array}{l}
\eps \hat v'' =(f'(\hat u^{\eps}) v)' + g\\
v(0)=0 \text{  ,  } v(1)=0
\end{array}
\right.
\]
admits the unique solution
\[
v(x) = \int_{0}^{x} \exp \left( - \int_{y}^{x} f'(\hat u^{\eps}) \frac{dz}{\eps}\right) \left[ \int_{0}^{y} g(z) \frac{dz}{\eps} - \frac{\exp \left( - \int_{y}^{1} f'(\hat u^{\eps}) \frac{dy}{\eps}\right) \int_{0}^{y} g(z) \frac{dz}{\eps}  }{\exp \left( - \int_{y}^{1} f'(\hat u^{\eps}) \frac{dy}{\eps}\right)} \right] dy
\]
so that $\mathcal{L}$ is invertible. Furthermore, if $g$ is in a ball of $L^{2}(0,1)$, the previous formula show that $v$ is in a ball of $H^{1}_{0}(0,1)$ so that, by Rellich--Kondrachov theorem, $\mathcal{L}^{-1}$ is compact.
\end{proof}
We can now state the spectral stability of $\hat u^{\eps}$. The following proposition corresponds to Lemma 2.5 in \cite{kreiss_burger_interval}. 
\bpr
Assume that $f$ is $\mathcal{C}^{2}(\R)$.  Any eigenvalue of $\mathcal{L}$ is real and negative. 
\epr

\subsection{Compressive waves}
This subsection is devoted to the study of compressive waves, i.e. $u_{0}>u_{1}$. The main result of this subsection asserts that $\hat u^{\eps}$ is a piece of a whole-line steady shock wave. In the following, we define, for $y \neq 0$, the quantity $y^{\dag}$ as the unique solution of the equation $f(x)=f(y)$ with $x \neq y$. We first recall some facts about whole-line shocks. Let $a>0$. We introduce the steady shock ODE
\begin{equation}\label{steady_shock}
\left\{
\begin{array}{l}
u'' =(f(u))' \text{  ,   } x \in \R,\\
u(-\infty)=a \text{  ,  } u(+\infty)=a^{\dag}.
\end{array}
\right.
\end{equation}
This problem is invariant under translation so that it is convenient to impose the extra condition
\begin{equation}\label{fix_shock}
u(0)=0.
\end{equation}
We recall some basic properties of the solution of this problem.
\bl\label{l.whole.line.shock}
Assume that $f$ satisfies (A0). For any $a>0$, Problem \eqref{steady_shock}-\eqref{fix_shock} admits a unique solution $u$. The function $u$ is decreasing. Furthermore, there exists a constant $C_{1}>0$ such that for any $x \leq 0$,
\[
 |u(x)-a| + |u'(x)| \leq C_{1} e^{f'(a) x},
 \]
and there exists a constant $C_{2}>0$ such that for any $x \geq 0$,
\[
|u(x)-a^{\dag}| +  |u'(x)| \leq C_{2} e^{f'(a^{\dag}) x}.
\]
\el
In the following we denote by $u(\cdot;a)$ this unique solution. Note that $u \left( \frac{\cdot}{\eps};a \right)$ is the unique solution of
\[
\left\{
\begin{array}{l}
\eps u'' =(f(u))' \text{  ,   } x \in \R,\\
u(-\infty)=a \text{  ,  } u(+\infty)=a^{\dag}.
\end{array}
\right.
\]
\begin{proof}
We briefly recall the proof of this statement. It is convenient to consider the equivalent problem
\begin{equation}\label{steady_shock2}
\left\{
\begin{array}{l}
u' =f(u)-f(a) \text{  ,   } x \in \R,\\
u(0)=0.
\end{array}
\right.
\end{equation}
Introducing the map 
\[
\phi_{a} : y \in (a^{\dag},a) \mapsto \int_{0}^{y} \frac{dz}{f(z)-f(a)} \in \R,
\]
we note that $\phi_{a}$ is decreasing and bijective. we define $u(x) := \phi_{a}^{-1}(x)$. Then, we notice that
\[
(u')' = f'(u) u' \text{  ,  } u'(0)= -f(a),
\]
so that $u'(x) = - f(a)\exp \left( \int_{0}^{x} f'(u(y)) dy \right)$. We get that $u'$ and $u-a$ go exponentially to $0$ as $x \to -\infty$. Therefore, the function $x \in \R^{-} \mapsto \int_{0}^{x} f'(u(y)) dy - f'(a) x$ is bounded. The estimates on $\R^{-}$ follow easily. The estimates on $\R^{+}$ are similar. 
\end{proof}
The next lemma provides an estimate of the differential of $u(\cdot;a)$ with respect to $a$.
\bl\label{l.diff_a}
Assume that $f$ satisfies (A0). Let $a>0$ and $b \in \R$. Denote $u=u(\cdot;a)$ and $v_{b} = \displaystyle \lim_{s \to 0} \frac{u(\cdot;a+sb)-u(\cdot;a)}{s}$. Then, there exists a constant $C>0$ depending only on $a$ such that, for any $x \in \R$,
\[
|v_{b} (x)| \leq C |b|.
\]
\el
\begin{proof}
The function $v$ satisfies the ODE
\[
v_{b}' =f'(u)v_{b} -f'(a)b \text{   ,   } v(0)=0,
\]
so that $v_{b}(x)= - b f'(a) \int_{0}^{x} \exp \left( \int_{y}^{x} f'(u) dz \right) dy$. The estimate follows from the estimates of Lemma \ref{l.whole.line.shock}. 
\end{proof}
We get from Lemma \ref{l.whole.line.shock} that for any $a>0$, for any $y \in (a^{\dag},a)$, the equation $u(x;a)=y$ has a unique solution. We keep the previous notation and denote as $\phi_{a}(y)$ this unique solution. 
We can now state the main result of this section.
\bt 
Assume that $f$ satisfies (A0). Let $\eps>0$, $u_{0},u_{1} \in \R$ such that $u_{0}>u_{1}$. Then the unique solution $\hat u^{\eps}$ of Problem \eqref{steady_scalar} is the restriction to $[0,1]$ of a whole-line shock: there exists a unique pair $(a,\lambda) \in \R^{+}_{\ast} \times \R$ such that, for any $x \in [0,1]$, we have $\hat u^{\eps}(x)=u \left( \frac{x}{\eps}+\lambda ; a \right)$. Furthermore,
\begin{enumerate}[label=\arabic*)]
\item If $f(u_{0})>f(u_{1})$, the shock moves to the endpoint $x=1$ as $\eps \to 0$: $u_{0}>0$ and there exists a constant $C>0$ such that, for any $x \in [0,1]$,
\[
\left| \hat u^{\eps}(x) - u \left( \frac{x-1}{\eps}+\phi_{u_{0}}(u_{1}) ; u_{0} \right) \right| \leq C \exp \left( - \frac{f'(u_{0})}{\eps} \right).
\]
\item If $f(u_{0})<f(u_{1})$, the shock moves to the endpoint $x=0$ as $\eps \to 0$: $u_{1}<0$ and there exists a constant $C>0$ such that, for any $x \in [0,1]$,
\[
\left| \hat u^{\eps}(x) - u \left(\frac{x}{\eps}+\phi_{u_{1}^{\dag}}(u_{0}) ; u_{1}^{\dag} \right) \right| \leq C \exp \left(\frac{f'(u_{1})}{\eps} \right).
\]
\item If $f(u_{0})=f(u_{1})$, the shock stays inside the interval as $\eps \to 0$: $u_{0}>0>u_{1}=u_{0}^{\dag}$ and there exists a constant $C>0$ such that, for any $x \in [0,1]$,
\[
\left| \hat u^{\eps}(x) - u \left(\frac{x-\frac{f'(u_{0}^{\dag})}{f'(u_{0}^{\dag})-f'(u_{0})}}{\eps} +b ; u_{0} \right) \right| \leq \frac{C}{\eps} \exp \left( - \frac{f'(u_{0}^{\dag}) f'(u_{0})}{f'(u_{0}^{\dag})-f'(u_{0})} \frac{1}{\eps} \right).
\]
with
\[
b=\frac{1}{f'(u_{0}^{\dag})-f'(u_{0})} \left( \int_{u_{0}^{\dag}}^{0} \frac{f'(u_{0}^{\dag})-f'(z)}{f(u_{0})-f(z)} dz + \int_{0}^{u_{0}} \frac{f'(u_{0})-f'(z)}{f(u_{0})-f(z)} dz\right).
\]
\end{enumerate}
\et

\br 
Note that the previous theorem also works with characteristic endpoints, i.e. the cases $u_{0}=0 > u_{1}$ or $u_{0} > u_{1}=0$. Therefore, in these cases, one expect a boundary layer of size $\eps$ whereas characteristic boundary layers on the half-life have size $\sqrt{\eps}$.
\er

\br 
When $f$ is even, we have $u_{0}^{\dag}=-u_{0}$ and in the case $u_{1}=-u_{0}$ a better look of the proof shows that
\[
\left| \hat u^{\eps}(x) - u \left(\frac{x-\frac12}{\eps} ; u_{0} \right) \right| \leq C \exp \left( - \frac{f'(u_{0})}{2\eps} \right).
\]
\er 

\begin{proof}
We consider first the case $f(u_{0})>f(u_{1})$. In that case $u_{0}>0$ and $u_{1} \in (u_{0}^{\dag},u_{0})$. We solve the following system, with unknowns $(a,\lambda) \in (u_{0},+\infty) \times \R^{-}$,
\[
\left\{
\begin{array}{l}
u(\lambda;a)=u_{0}\\
u \left( \frac{1}{\eps} + \lambda; a\right)= u_{1}
\end{array}
\right.
\Longleftrightarrow
\left\{
\begin{array}{l}
\lambda=\phi_{a}(u_{0})\\
\frac{1}{\eps} + \phi_{a}(u_{0}) = \phi_{a}(u_{1}).
\end{array}
\right.
\]
Then, we remark that $\psi := a \in (u_{0},+\infty) \mapsto \phi_{a}(u_{1}) - \phi_{a}(u_{0}) \in \R^{+}_{\ast}$ is actually
\[
\psi(a) = \int_{u_{1}}^{u_{0}} \frac{dz}{f(a)-f(z)}
\]
and this map  is continuous and decreasing, $\displaystyle \lim_{a \to u_{0}+} \psi(a)=+\infty$ and $\displaystyle \lim_{a \to +\infty} \psi(a)=0$. That proves that $\hat u^{\eps}$ is the restriction to $[0,1]$ of a unique whole-line shock. Furthermore, we remark  that $\displaystyle \lim_{\eps \to 0} a=u_{0}$. Then, denoting by simplicity $f^{-1}$ as the inverse function of $f_{|\R^{+}}$, we have
\begin{align*}
\int_{0}^{u_{0}} \hspace{-0.1cm} \frac{dz}{f(a)-f(z)} - \frac{1}{f'(u_{0})} \hspace{-0.1cm}  \int_{0}^{f(u_{0})} \frac{dy}{f(a)-y} &= \int_{0}^{f(u_{0})} \hspace{-0.1cm}  \frac{1}{f'(f^{-1}(y)) f'(u_{0})} \frac{f'(u_{0}) - f'(f^{-1}(y))}{f(a)-y} dy\\
&= \frac{1}{f'(u_{0})}  \int_{0}^{u_{0}} \frac{f'(u_{0})-f'(z)}{f(a)-f(z)} dz
\end{align*}
the last expression being bounded uniformly with respect to $\eps \to 0$. Therefore, there exists a constant $C>0$ such that
\[
0 \leq f'(u_{0})(a-u_{0}) \leq f(a)-f(u_{0}) \leq C e^{-\frac{f'(u_{0})}{\eps}}.
\]
That proves that $a = u_{0} + \mathcal{O} ( e^{-\frac{f'(u_{0})}{\eps}} )$. Furthermore, we have
\[
\lambda = - \frac{1}{\eps} + \phi_{a}(u_{1}) = - \frac{1}{\eps} + \phi_{u_{0}}(u_{1}) + \mathcal{O} ( e^{-\frac{f'(u_{0})}{\eps}} ).
\]
Finally, for any $x \in [0,1]$, using Lemma \ref{l.diff_a} and the fact $u'(\cdot;u_{0})$ is bounded, there exists a constant $C>0$ such that
\begin{align*}
\left| u \left( \frac{x}{\eps}+\lambda ; a \right)  - u \left( \frac{x-1}{\eps}+\phi_{u_{0}}(u_{1})  ; a\right) \right| &\leq \left| u \left( \frac{x}{\eps}+\lambda ; a \right)  - u \left( \frac{x}{\eps}+\lambda ; u_{0} \right) \right|\\
&\hspace{0.5cm} +\left| u \left( \frac{x}{\eps}+\lambda ; u_{0} \right)  - u \left( \frac{x-1}{\eps}+\phi_{u_{0}}(u_{1}) ; u_{0} \right) \right|\\
&\leq C  \left| a - u_{0} \right| +C \left| \lambda + \frac{1}{\eps} +\phi_{u_{0}}(u_{1}) \right|.
\end{align*}
The case $f(u_{0})<f(u_{1})$ follows similarly. If now $f(u_{0})=f(u_{1})$, following the previous argument, there exists a constant $C>0$ independent of $\eps \to 0$ such that
\[
\left| \int_{u_{0}^{\dag}}^{u_{0}} \hspace{-0.1cm} \frac{dz}{f(a)-f(z)} - \frac{1}{f'(u_{0})} \hspace{-0.1cm}  \int_{0}^{f(u_{0})} \frac{dy}{f(a)-y}  - \frac{1}{f'(u_{0}^{\dag})} \hspace{-0.1cm}  \int_{f(u_{0})}^{0} \frac{dy}{f(a)-y} \right| \leq C
\]
so that $a = u_{0} + \mathcal{O} \left( e^{- \frac{f'(u_{0}^{\dag}) f'(u_{0})}{f'(u_{0}^{\dag})-f'(u_{0})} \frac{1}{\eps}} \right)$. Then, we get after some computations that
\begin{align*}
\lambda = &- \frac{f'(u_{0}^{\dag})}{f'(u_{0}^{\dag})-f'(u_{0})} \frac{1}{\eps} + \frac{1}{f'(u_{0}^{\dag})-f'(u_{0})} \left( \int_{u_{0}^{\dag}}^{0} \frac{f'(u_{0}^{\dag})-f'(z)}{f(u_{0})-f(z)} dz + \int_{0}^{u_{0}} \frac{f'(u_{0})-f'(z)}{f(u_{0})-f(z)} dz\right)\\
&+ \frac{1}{f'(u_{0}^{\dag})-f'(u_{0})} c_{a}
\end{align*}
with
\[
c _{a}= \int_{u_{0}^{\dag}}^{0} \frac{f'(u_{0}^{\dag})-f'(z)}{f(u_{0}^{\dag})-f(z)} \frac{f(u_{0})-f(a)}{f(a)-f(z)} dz + \int_{0}^{u_{0}} \frac{f'(u_{0})-f'(z)}{f(u_{0})-f(z)} \frac{f(u_{0})-f(a)}{f(a)-f(z)} dz.
\]
It is easy to check that there exists a constant $C>0$ such that
\[
|c_{a}| \leq C (a-u_{0}) (1+|\ln(a-u_{0})|)
\]
and the result follows from the same way as the previous cases.
\end{proof}

\subsection{Expansive waves}
This subsection is devoted to the study of expansive waves, i.e. $u_{0}<u_{1}$. We will consider two different cases. Firstly, we will study the case where $u_{0}$ and $u_{1}$ agree in sign. In that case, we obtain, as $\eps \to 0$, a single boundary layer. Secondly, we will consider the situation where $u_{0}<0<u_{1}$. We will see in that case that a double boundary layer occurs as $\eps \to 0$.

\subsubsection{Single boundary layer at $x=1$}\label{singleBL1}
We start with the case $0 \leq u_{0}<u_{1}$. The flow is outgoing at $x=1$ and we expect a boundary layer at $x=1$. Influence by the work of Gisclon-Serre {\cite{GS94,G96} on the half-line, we consider the following expansion, for any $x \in [0,1]$,
\begin{equation}\label{WKB_1}
\hat u^{\eps}(x) = u_{0} + v \left( \frac{1-x}{\eps} \right) + w \left( \frac{1-x}{\eps} \right)
\end{equation}
where $v$ is a function defined on $\R^{+}$ that satisfies the following ODE
\begin{equation}\label{ODE_BL1}
v'' = -(f(u_{0}+v))' \text{   ,   } v(0)=u_{1}-u_{0} \text{  and  } v(+\infty)=0,
\end{equation}
and $w$ a function defined on $\left[0,\frac{1}{\eps}\right]$ supposed to be small and satisfying $w(0)=0$. We begin our analysis with a technical lemma about the solution of Problem \eqref{ODE_BL1}.

\bl\label{v_BL1}
Assume that $f$ satisfies (A0). Let $u_{0},u_{1} \in \R$ such that $0 \leq u_{0}<u_{1}$. Assume that $v$ solves \eqref{ODE_BL1}. Then $v$ is positive, $v'$ is negative. There exists a constant $C>0$ such that, for any $y \in \R^{+}$, if $u_{0}>0$ we have
\[
|v(y)| \leq C e^{-f'(u_{0})y},
\]
whereas if $u_{0}=0$ and $f$ satisfies (A1) we have
\[
|v(y)| \leq \frac{C}{1+y}.
\]
\el 

\begin{proof}
The proof is similar to the one of Lemma \ref{l.whole.line.shock}. We consider the equivalent problem by integrating from $+\infty$ to $y \geq 0$
\[
v' =f(u_{0})-f(u_{0}+v) \text{   ,   } v(0)=u_{1}-u_{0}.
\]
Introducing the map 
\[
\phi : y \in (0,u_{1}-u_{0}] \mapsto \int_{y}^{u_{1}-u_{0}} \frac{dz}{f(u_{0}+y)-f(u_{0})} \in \R_{+},
\]
we note that $\phi$ is decreasing and bijective. Then, $v(x) = \phi^{-1}(x)$. If $u_{0}>0$, the estimate follows with the same argument as in the proof of Lemma \ref{l.whole.line.shock} whereas if $u_{0}=0$ we use the fact that under assumption (A1) there exists a constant $\alpha>0$ such that $f(y) \geq \alpha y^2$ for $y>0$ small enough.
\end{proof}

We can now state the main result of this subsubsection.
\bt\label{thm_BL1}
Assume that $f$ satisfies (A0). Let $\eps \in (0,1]$, $u_{0},u_{1} \in \R$ such that $0 \leq u_{0}<u_{1}$. Let $\hat u^{\eps}$ be the unique solution of Problem \eqref{steady_scalar} and $v$ the unique solution of Problem \eqref{ODE_BL1}. Then, there exists a constant $C>0$, such that for any $x \in [0,1]$, if $u_{0}>0$ we have
\[
\left| \hat u^{\eps}(x) - u_{0} - v \left( \frac{1-x}{\eps} \right) \right| \leq C e^{-\frac{f'(u_{0})}{\eps}},
\]
whereas if $u_{0}=0$ and $f$ satisfies (A1) we have
\[
\left| \hat u^{\eps}(x) - v \left( \frac{1-x}{\eps} \right) \right| \leq C \eps.
\]
\et
Note that in the characteristic case, we lose the exponential decay.

\begin{proof}
We use the expansion \eqref{WKB_1} and notice that $w$ satisfies
\begin{equation*}
\left\{
\begin{array}{l}
w''=(f'(u_{0}+v)-f'(u_{0}+v+w))v'-f'(u_{0}+v+w)w' \text{  ,  on } \left[0,\frac{1}{\eps}\right],\\
w(0)=0 \text{  ,  } w(\frac{1}{\eps})=-v(\frac{1}{\eps}).
\end{array}
\right.
\end{equation*}
Since $f'$ is increasing and $v'<0$, the maximum principle gives that $-v(\frac{1}{\eps}) \leq w \leq 0$. The result follows.
\end{proof}

\subsubsection{Single boundary layer at $x=0$}\label{singleBL0}
We study the case $u_{0}<u_{1} \leq 0$. The flow is outgoing at $x=0$ and we expect a boundary layer at $x=0$. Here we consider the following expansion, for any $x \in [0,1]$,
\[
\hat u^{\eps}(x) = u_{1} + v \left( \frac{x}{\eps} \right) + w \left( \frac{x}{\eps} \right)
\]
where $v$ is a function defined on $\R^{+}$ that satisfies the following ODE
\begin{equation}\label{ODE_BL0}
v'' = (f(u_{1}+v))' \text{   ,   } v(0)=u_{0}-u_{1} \text{  and  } v(+\infty)=0,
\end{equation}
and $w$ a function defined on $\left[0,\frac{1}{\eps}\right]$ supposed to be small and satisfying $w(0)=0$. We have similar results as in the previous subsubsection. 
\bl\label{v_BL0}
Assume that $f$ satisfies (A0). Let $u_{0},u_{1} \in \R$ such that $u_{0}<u_{1} \leq 0$. Assume that $v$ solves \eqref{ODE_BL0}. Then $v$ is negative, $v'$ is positive. There exists a constant $C>0$ such that, for any $y \in \R^{+}$, if $u_{1}<0$ we have
\[
|v(y)| \leq C e^{f'(u_{1})y},
\]
whereas if $u_{1}=0$ and $f$ satisfies (A1) we have
\[
|v(y)| \leq \frac{C}{1+y}.
\]
\el 

\bt
Assume that $f$ satisfies (A0). Let $\eps \in (0,1]$, $u_{0},u_{1} \in \R$ such that $u_{0}<u_{1} \leq 0$. Let $\hat u^{\eps}$ be the unique solution of Problem \eqref{steady_scalar} and $v$ the unique solution of Problem \eqref{ODE_BL0}. Then, there exists a constant $C>0$, such that for any $x \in [0,1]$,  if $u_{1}<0$ we have
\[
\left| \hat u^{\eps}(x) - u_{1} - v \left( \frac{x}{\eps} \right) \right| \leq C e^{\frac{f'(u_{1})}{\eps}},
\]
whereas if $u_{1}=0$ and $f$ satisfies (A1) we have
\[
\left| \hat u^{\eps}(x) - v \left( \frac{x}{\eps} \right) \right| \leq C \eps.
\]
\et

\subsubsection{Double boundary layer}

We finally study the case where $u_{0}<0<u_{1}$. Here the two endpoints are outgoing. We expect then a boundary layer on both side: a double boundary layer. We first need a technical lemma on the unique solution of \eqref{steady_scalar}.

\bl 
Assume that $f$ satisfies (A0) and (A1). Let $u_{0},u_{1} \in \R$ such that $u_{0}<0<u_{1}$. Assume that $\hat u^{\eps}$ is the unique solution of \eqref{steady_scalar}. Then, the exists a constant $C>0$ such that
\[
\left| \hat u^{\eps}\left( \tfrac12 \right) \right| \leq C \eps.
\]
\el 

\begin{proof}
From Assumptions (A0)-(A1), there exists two constants $\alpha,\beta>0$ so that for any $y \in (u_{0},u_{1})$, 
\begin{equation}\label{control_f}
\alpha |y|^{2} \leq f(y) \leq \beta |y|^{2}.
\end{equation}
Then, we introduce $x_{\eps}$ the unique element of $(0,1)$ such that $\hat u^{\eps} (x_{\eps}) = 0$. Integrating \eqref{steady_scalar}, we get
\[
\eps \hat u^{\eps}_{x} = \eps \hat u^{\eps}_{x}(x_{\eps}) + f(\hat u^{\eps}).
\]
Note also that $\hat u^{\eps}_{x}(x_{\eps}) = \min \hat u^{\eps}_{x}$ so that $u_{1}-u_{0} \geq \hat u^{\eps}_{x}(x_{\eps})>0$. Using \eqref{control_f}, we easily obtain from the previous ODE that
\[
\left( \arctan \left( \sqrt{\frac{\beta}{\eps \hat u^{\eps}_{x}(x_{\eps})} } \hat u^{\eps} \right) \right)' \leq \sqrt{\frac{\beta \hat u^{\eps}_{x}(x_{\eps})}{\eps}} 
\]
and integrating from $0$ to $1$ and using that $\eps \hat u^{\eps}_{x}(x_{\eps})$ tends to $0$, we prove the existence of $m>0$ such that that $\hat u^{\eps}_{x}(x_{\eps}) \geq m \eps$. Then, we similarly obtain
\[
\left( \arctan \left( \sqrt{\frac{\alpha}{\eps^2 m} } \hat u^{\eps} \right) \right)' \geq \sqrt{ \alpha m} 
\]
and integrating from $0$ to $\frac12$ and  from $\frac12$ to $1$ we get the desired estimate.
\end{proof}
Then, we introduce the two boundary layers: let $v_{0}$ function defined on $\R^{+}$ that satisfies the ODE
\begin{equation}\label{ODE_BL0d}
v_{0}'' = (f(v_{0}))' \text{   ,   } v_{0}(0)=u_{0} \text{  and  } v_{0}(+\infty)=0,
\end{equation}
and let $v_{1}$ function defined on $\R^{+}$ that satisfies the ODE
\begin{equation}\label{ODE_BL1d}
v_{1}'' = -(f(v_{1}))' \text{   ,   } v_{1}(0)=u_{1} \text{  and  } v_{1}(+\infty)=0.
\end{equation}
We can now state the main result of this subsubsection.

\bt
Assume that $f$ satisfies (A0) and (A1). Let $\eps \in (0,1]$, $u_{0},u_{1} \in \R$ such that $u_{0}<0<u_{1}$. Let $\hat u^{\eps}$ be the unique solution of Problem \eqref{steady_scalar} and $v_{0}$ and $v_{1}$  the unique solution of respectively Problem \eqref{ODE_BL0d} and Problem \eqref{ODE_BL1d}. Then, there exists a constant $C>0$, for any $x \in [0,1]$,
\[
\left| \hat u^{\eps}(x) - v_{0} \left( \frac{x}{\eps}\right) - v_{1} \left( \frac{1-x}{\eps} \right) \right| \leq C \eps.
\]
\et

\begin{proof}
Recall that Lemmas \ref{v_BL1} and \ref{v_BL0} shows that for any $y>0$
\[
0< -v_{0}(y) \lesssim \frac{1}{1+y} \text{   ,   } 0< v_{1}(y) \lesssim \frac{1}{1+y} \text{ ,  } v_{0}'>0 \text{  ,  } v_{1}'<0.
\]
We first consider what happens on $[0,\frac12]$. Decomposing $\hat u^{\eps}$ on $[0,\frac12]$ as
 \[
\hat u^{\eps}(x) = v_{0} \left( \frac{x}{\eps} \right) + w \left( \frac{x}{\eps} \right)
\]
we get, as in Subsection \ref{singleBL0} that
\begin{equation*}
\left\{
\begin{array}{l}
w''=(f'(v_{0}+w)-f'(v_{0}))v_{0}'+f'(v_{0}+w)w' \text{  ,  on } \left[0,\frac{1}{2\eps}\right],\\
w(0)=0 \text{  ,  } w(\frac{1}{2\eps})=\hat u^{\eps}(\tfrac12)-v_{0}(\tfrac{1}{2\eps}).
\end{array}
\right.
\end{equation*}
Similarly to the proof of Theorem \ref{thm_BL1}, we get $0 \leq |w| \leq |\hat u^{\eps}(\tfrac12)-v_{0}(\tfrac{1}{2\eps})|$ from the maximum principle. Using the previous lemma, the desired control on $[0,\frac12]$ follows since $\left| v_{0} \left( \tfrac{1}{2\eps} \right) \right| \lesssim \eps$ and for any $x \in [0,\frac12]$, $\left| v_{1} \left( \tfrac{1-x}{\eps} \right) \right| \leq \left| v_{1} \left( \tfrac{1}{2\eps} \right) \right|  \lesssim \eps$. A similar proof can be done on $[\frac12,1]$.
\end{proof}

\section{Asymptotic limits II : General case}\label{s:asymptoticII}

\subsection{Small-viscosity/large interval asymptotics}\label{s:smallvisc}
In either the vanishing-viscosity limit, or the large-interval limit $[0, X]$, $X\to +\infty$
after rescaling back to the unit interval $[0,1]$, we are led to consider in place of \eqref{conservation_law}
\begin{equation}\label{vvisc}
	\partial_{t} f^0(U) + f(U)_{x} = \eps \left(B(U) U_{x} \right)_{x} \text{ , } 0<x<1,
\end{equation}
with $\eps=\frac{1}{X}$, $\eps\to 0^+$, and the steady profile equation is
\be\label{vsteady}
f(U)' = \eps \left(B(U) U' \right)' .
\ee
Formally setting $\eps=0$ in \eqref{vsteady}, we obtain $f(U)\equiv \const$, or $U\equiv \const$ on smooth 
portions, separated by standing shock and boundary layers.
This indicates a rich ``zoo'' of possible steady solution structures.
Some examples from the isentropic gas dynamics case are displayed in Figure \ref{steadysolisentropic} which corresponds to behaviors we observed in Section \ref{s:asymptoticI}.

\begin{figure}[htbp]
\centering
\includegraphics[scale=0.13]{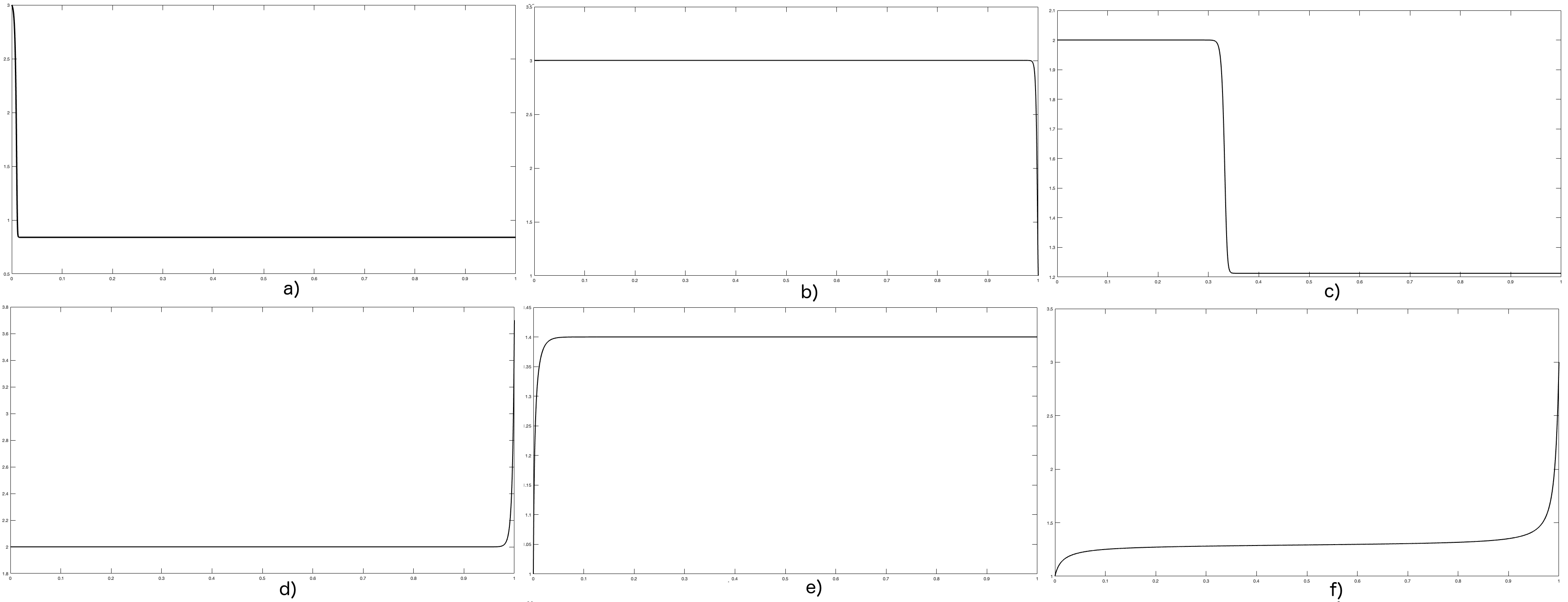}
\caption{Steady solutions of the Navier-Stokes isentropic equations \eqref{isogas} on $(0,1)$ with viscosity $\nu=0.01$ and $p(\rho)=\rho^{1.4}$. We only plot $u$.
Panels a) and b) depict left and right compressive boundary layers, c) an interior shock,
and d) and e) left and right expansive boundary layers, each connecting to nondegenerate
rest states of the steady profile ODE, hence exponentially convergent.  Panel f) depicts a double boundary layer (expansive)
consisting of left and right degenerate (hence non-exponentially convergent)
boundary layers meeting at a characteristic middle state.}
\label{steadysolisentropic}
\end{figure}

\subsubsection{Hyperbolic structure}\label{s:feasconfig}
It is readily deduced that the limiting configurations depicted in Figure \ref{steadysolisentropic} are in 
fact the only possibilities for the isentropic case.  For, $\rho u\equiv \const$ 
imposes $\rho, u>0$ throughout the limiting pattern, whence all states have either one or two positive characteristics
$\alpha=u\pm c$, where $c$ is sound speed. This in turn implies, by general results of
\cite{MaZ} that, as rest points of the scalar steady profile ODE, they are attractors or repellors, respectively.
A nontrivial nondegenerate boundary layer at the left endpoint $x=0$ must terminate at $x=0^+$ at a rest point, which must 
therefore be an attractor; at the right endpoint, $x=1^-$ a repellor.  Interior shocks must connect a repellor on the
left with a saddle on the right.  It follows that nontrivial boundary layers and interior shocks cannot coexist,
but occur only separately. Moreover, there can occur at most one nondegenerate boundary layer, either at the left or the
right endpoint.  The final possibility completing our zoo of possible configurations
is a double boundary layer configuration, for which the
end point must necessarilty be degenerate, corresponding to a ``sonic'', or ``characteristic'' point where $u=c$.
Shocks or boundary layers connecting to a nondegenerate rest point decay exponentially; those connecting to
a degenerate rest point decay algebraically. For further discussion of boundary layer structure for the compressible
Navier-Stokes equations, see, e.g., \cite{SZ,GMWZ}.

For the nonisentropic case, we again have $\rho, u>0$, imposing in this instance that state have either two
or three positive characteristics $\alpha= u-c, u, u+c$;
as rest points of the steady profile equation, these correspond to saddle points or repellors, respectively.
Similar analysis to the above yields again that left boundary layers and shocks cannot coincide; however, there
is the new possibility of patterns consisting of a left boundary layer plus a right boundary layer, or
an interior shock plus a right boundary layer, as nontrivial right boundary layers may 
connect to either repellors or saddles in the nonisentropic case. 

\subsubsection{Rigorous asymptotics}\label{s:rig}
A very interesting open problem is to carry out the zero-viscosity limit rigorously, in preparation 
for the more complicated dynamics of the 2d shock tube problem.
One might hope also to understand the spectra of such wave patterns 
as the approximate direct sum of the spectra of component layers, as would follow, for example, by the methods of
\cite{Z1,Z4} if the components layers remained appropriately spatially separated in the limit.
A first apparently nontrival step, of interest in its own right, is to show for given boundary data
existence and uniqueness of feasible limiting patterns as described in Section \ref{s:feasconfig}.

We illustrate this for the (surprisingly complicated) case of isentropic gas dynamics:
\be\label{isogas}
\begin{aligned}
&\rho_t + (\rho u)_x=0,\qquad (\rho u)_t + (\rho u^2 + p(\rho))_x= \nu u_{xx}; \qquad \rho>0 , \qquad p', p''>0.\\
&\rho(0)=\rho_{0} \text{  ,  } u(0)=u_{0} \text{  ,  } u(1)=u_{1}.
\end{aligned}
\ee

\begin{proposition}\label{iso_zoo}
	For isentropic gas dynamics \eqref{isogas}, there exists, for each boundary data $(\rho_{0},u_{0},u_{1})$, a unique limiting steady configuration 
	consisting of: (i) a single boundary layer at left or right endpoint; (ii) a single interior
	standing shock; or (iii) a double boundary-layer connecting from each boundary to a ``characteristic 
	point'' $u=c$, with $c=\sqrt{p'(\rho)}$ denoting sound speed.
	These determine a unique inviscid solution up to location of the interior shock.
	The unique viscous steady solution converges to an
	inviscid one with particular
	interior shock location as viscosity $\nu\to 0$
	at sharp rates $\nu^{1/p}$ in $L^p$, $1<p<\infty$ and $\nu\log\nu$ in $L^1$.
\end{proposition}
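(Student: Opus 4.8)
The plan is to reduce the system to a single scalar autonomous ODE and read off every assertion from the geometry of one convex function. Integrating the steady mass equation gives $\rho u\equiv m:=\rho(0)u(0)$, a fixed positive constant, so $\rho=m/u$; integrating the momentum equation once yields the scalar profile equation
\[
\nu u'=g(u;C),\qquad g(u;C):=G(u)-C,\quad G(u):=mu+p(m/u),
\]
where $C$ is the constant of integration (the analogue of $C_2$ in \eqref{ode_general}) and the boundary conditions become $u(0)=u_0$, $u(1)=u_1$. Since $p',p''>0$, $G$ is strictly convex with $G\to+\infty$ as $u\to0^+$ and $u\to\infty$ and a unique minimum at the sonic point $u_s$ solving $G'(u_s)=0$, i.e. $u_s=\sqrt{p'(m/u_s)}=c$; indeed $G'(u)=(\rho/u)(u^2-c^2)$, so rest points with $u<u_s$ are subsonic attractors of the profile flow and those with $u>u_s$ are supersonic repellors. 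For $C>C_s:=G(u_s)$ there are two rest points $u_-(C)<u_s<u_+(C)$, for $C=C_s$ a single degenerate one at $u_s$, and for $C<C_s$ none. Because a scalar autonomous flow cannot cross a zero of $g$, every profile is strictly monotone on $[0,1]$, and separating variables, $T(C):=\int_{u_0\wedge u_1}^{u_0\vee u_1} ds/|g(s;C)|=1/\nu$, exhibits $T$ as a continuous strictly monotone bijection onto $(0,\infty)$; this already gives existence and \emph{uniqueness} of the viscous solution for each $\nu$.

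First I would classify the limiting configurations. A monotone connection from $u_0$ to $u_1$ exists precisely when $g$ has fixed sign on $[u_0\wedge u_1,\,u_0\vee u_1]$, i.e. when $C$ lies below the minimum of $G$ there (for increasing $u$) or above its maximum (for decreasing $u$), and $T(C)\to\infty$ exactly as $C$ tends to that extreme value. Since $1/\nu\to\infty$, the selected constant $C(\nu)$ must converge to $C^*$ equal to this extreme value; by convexity the extremum is attained either at the interior point $u_s$ or at an endpoint, and this alternative is what distinguishes the configurations. Concretely: if $u$ is increasing, a left (expansive) boundary layer with interior state $u_-=u_1$ occurs when $u_1\le u_s$, a right one with interior state $u_+=u_0$ when $u_0\ge u_s$, and a degenerate \emph{double boundary layer} at $u_s$ when $u_0<u_s<u_1$. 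If $u$ is decreasing, a left (compressive) boundary layer occurs when $u_0\le u_s$ and a right one when $u_1\ge u_s$, while for $u_1<u_s<u_0$ the maximum of $G$ sits at whichever endpoint carries the larger value of $G$, giving a right boundary layer if $G(u_0)>G(u_1)$, a left one if $G(u_0)<G(u_1)$, and --- precisely when $G(u_0)=G(u_1)$, the Rankine--Hugoniot condition --- an \emph{interior shock} from $u_+=u_0$ to $u_-=u_1$. In every case $C^*$, hence the interior rest state(s), is uniquely determined, which yields existence and uniqueness of the limiting configuration, and of the inviscid solution up to the position of the shock in the Rankine--Hugoniot case.

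Finally I would establish convergence with the stated rates from the same quadrature. For a nondegenerate layer or shock the relevant rest point is a simple zero of $g$, so the profile reaches the interior state exponentially on the $O(\nu)$ scale; then $\|u^\nu-u^0\|_{L^p}^p\approx\int|\mathrm{jump}|^p e^{-p\lambda x/\nu}\,dx=O(\nu)$, giving $O(\nu^{1/p})$ in $L^p$ and $O(\nu)$ in $L^1$. The sharp rate is set by the characteristic (double boundary layer) case: near $u_s$ one has $g\approx a\big((u-u_s)^2+\delta^2\big)$ with $a=\tfrac12 G''(u_s)>0$ and $\delta^2=(C_s-C)/a$, and matching $T(C)=1/\nu$ forces $\delta=O(\nu)$ and $u^\nu-u_s\approx\delta\tan(\pi(x-\tfrac12))$; the slowly decaying tails give $\|u^\nu-u_s\|_{L^1}\approx\tfrac{2\delta}{\pi}|\log\delta|=O(\nu|\log\nu|)$ and $\|u^\nu-u_s\|_{L^p}^p=O(\delta)=O(\nu)$, i.e. the sharp $\nu^{1/p}$ and $\nu|\log\nu|$. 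The main obstacle is the shock case, where $g$ has simple zeros at \emph{both} endpoints as $C\downarrow C^{**}:=G(u_0)=G(u_1)$ and $T(C)$ diverges logarithmically at each; splitting the integral at $u_s$ and using $C-G(s)\approx(C-C^{**})+|G'(u_{0,1})|\,\mathrm{dist}$ near each endpoint shows the two partial times behave like $|G'(u_0)|^{-1}|\log(C-C^{**})|$ and $|G'(u_1)|^{-1}|\log(C-C^{**})|$, whence the shock localizes at $x^*=|G'(u_1)|/(G'(u_0)+|G'(u_1)|)$ and $x^*(\nu)\to x^*$. Pinning down this balance --- and verifying $C(\nu)\downarrow C^{**}$ exponentially, so the outer states are attained to exponential accuracy --- is the delicate step; by contrast the classification and the layer-rate estimates are routine convexity and quadrature computations.
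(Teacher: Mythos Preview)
Your approach is essentially the same as the paper's: both reduce the steady problem to a scalar autonomous ODE governed by a single convex function (you work in $u$ with $G(u)=mu+p(m/u)$, the paper in $\rho$ with $\psi(\rho)=m^2/\rho+p(\rho)$, and these coincide under $u=m/\rho$), classify the limiting configurations by the position of the data relative to the sonic point, pin down the interior shock location via the balance of the two exponential decay rates $|G'(u_0)|,|G'(u_1)|$ (the paper's $r_0,r_1$), and extract the $\nu|\log\nu|$ rate from the quadratic degeneracy at the characteristic point. Your explicit use of the monotone quadrature $C\mapsto T(C)=1/\nu$ to obtain existence and uniqueness of the viscous profile for each $\nu$, and the explicit $\delta\tan$ inner description in the double-layer case, are clean additions that the paper either defers to \cite{MelZ} or leaves implicit.
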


\begin{proof}
	We fix $(\rho_{0},u_{0})$. Referring to Section 2 in \cite{MelZ}, a steady solution $(\hat \rho ,\hat u)$ of \eqref{isogas} is given by 
	\be\label{isoprof}
	m\nu \hat \rho'= \hat \rho^2(b- \psi(\hat \rho)), \qquad \hat u= m/\hat \rho,
	\ee
	where $\nu$ is coefficient of viscosity;  $m= \rho_{0} u_{0}>0$;
	$\psi(\rho):= m^2/\rho + p(\rho)$ is convex, with limit $+\infty$ as $\rho\to 0$ and $\rho\to +\infty$; 
	and $b$ is a constant determined implicitly by $\hat \rho(1)=m/u_{1}$.

	Evidently, $\psi$ is minimized at the characteristic point $\rho_*$ where $m^2/\rho_{*}^2=p'(\rho_{*})$. 
	Note that by denoting $u_{*}=m/\rho_{*}$ we get $u_{*}=c$ by positivity of $u_{*}$ and $c$.
	For $b>\psi_*:=\psi(\rho_*)$, there exist nondegenerate rest points $\rho_-<\rho_*<\rho_+$, with $\hat \rho'>0$
	for $\rho_-<\hat \rho<\rho_+$ and $\hat \rho'<0$ for $\hat \rho<\rho_-$ and $\hat \rho>\rho_+$.
	For $b=\psi_*$, there is a single degenerate rest point at $\hat \rho=\rho_*$, with $\hat \rho'<0$ for
	all $\hat \rho\neq \rho_*$. Checking $\psi''(\rho_*)=2m^2/\rho_*^3+p''(\rho_*)>0$, hence to lowest
	order the flow near $\rho_*$ is of Riccati type, $\hat \rho'\sim - \nu^{-1}\hat \rho^2$.
	For $b<\psi_*$, there are no rest points and $\hat \rho'$ is uniformly negative.
	See Figure \ref{figphase} for a typical phase portrait.

   \begin{figure}[htbp]
   \centering
   \begin{tikzpicture}
 \draw[line width=0.5mm,color=black] plot [domain=0:4] (\x,0.3*\x^2); 
  \draw[line width=0.5mm,color=black] plot [domain=-3.5:0] (\x,-0.4*\x^2);
  \draw[line width=0.4mm,color=black][->] (-4,-1.5) -- (-4,5.5); 
  \draw[line width=0.4mm,color=black][->] (-5,-1) -- (5,-1); 
  
  \draw (4.1,4.3) node{\Large $\psi$}; 
  
  \draw[line width=0.4mm,color=black][-] [dashed] (-4,3) -- (4,3); 
  \draw (-4.2,3) node{\Large $b$}; 

  \draw[line width=0.4mm,color=black][-] [dashed] (3.162277660,3) -- (3.162277660,-1); 
  \draw (3.162277660,-1.3) node{\Large $\rho_{+}$}; 
  
  \draw[line width=0.4mm,color=black][-] [dashed] (-2.73861278753,3) -- (-2.73861278753,-1); 
  \draw (-2.73861278753,-1.3) node{\Large $\rho_{-}$}; 
  
   \draw[line width=0.4mm,color=black][-] [dashed] (0,0) -- (0,-1); 
  \draw (0,-1.3) node{\Large $\rho_{\ast}$}; 
  
  \draw[line width=0.4mm,color=black][->] (-2.9,-0.7) -- (-4,-0.7); 
   \draw[line width=0.4mm,color=black][->] (-2.6,-0.7) -- (3,-0.7); 
   \draw[line width=0.4mm,color=black][->] (5,-0.7) -- (3.3,-0.7); 
\end{tikzpicture}
  \caption{ Phase portrait of the ODE \eqref{isoprof}}
  \label{figphase} 
   \end{figure}
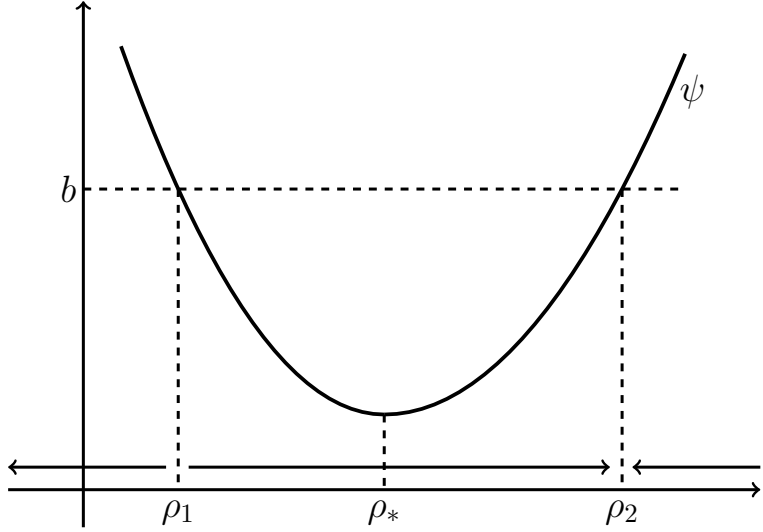

	{\bf (case $\rho(0)\geq \rho(1)$)}
	It follows that for any $\rho(0)>\rho(1)>\rho_*$, there exists a unique decreasing right viscous boundary layer 
	with value $\rho(1)$ at $x=1$ and converging at exponential rate $e^{-r_1 |x|/\nu}$ as
	$x\to -\infty$ to the lefthand state $\rho(0)$. Similarly, for any $\rho_*>\rho(0)>\rho(1)$, there
	is a unique decreasing left viscous boundary layer with value $\rho(0)$ at $x=0$ and
	converging as $e^{-r_0 |x|/\nu}$ as $x\to + \infty$ to $\rho(1)$.
	For $\rho(0)\geq \rho_*\geq \rho(1)$, there are unique decreasing {\it characteristic} 
	right and left viscous boundary layers connecting $\rho(1)$ and $\rho(0)$ to the characteristic point $\rho_*$, 
	with convergence at {\it algebraic rate} $|x/\nu|^{-1}$.  

	{\bf (case $\rho(0)\leq \rho(1)$)}
	Similarly, for $\rho(0)<\rho(1)<\rho_*$, there is a unique increasing and exponentially
	converging right viscous boundary layer from $\rho(1)<\rho_+(b)$ to $\rho(0)=\rho_-(b)$ for some $b>\psi_*$,
	while for $\rho_*< \rho(0)<\rho(1)$, there is a unique increasing and exponentially
	converging left viscous boundary layer from $ \rho(0)>\rho_-(b)$ to $\rho(1)=\rho_+(b)$ for some $b>\psi_*$.
	For $\rho(0)<\rho_*<\rho(1)$, finally, there exists either a unique
	increasing and exponentially converging right viscous boundary layer from $\rho(1)<\rho_+(b)$ 
	to $\rho(0)=\rho_-(b)$ for some $b>\psi_*$,
	a unique increasing and exponentially converging left viscous boundary layer from $ \rho(0)>\rho_-(b)$ 
	to $\rho(1)=\rho_+(b)$ for some $b>\psi_*$,
	or an exponentially converging stationary interior shock from $\rho(0)=\rho_-(b)$ to $\rho(1)=\rho_+(b)$
	for some $b>\psi_*$.

	Note, at the hyperbolic (inviscid, $\nu=0$) level, 
	that left and right characteristic boundary layers may be concatenated
	to form a ``double boundary layer'' with intermediate characteristic state $\rho_*$, connecting
	$\rho(0)>\rho_*$ to $\rho(1)<\rho_*$.  In all other cases, the data $\rho(0)$ and $\rho(1)$
	may be connected by a single element consisting of a left or right, increasing or decreasing boudary layer,
	or, in the limiting case, a single stationary interior shock.

	Combining these elements, we obtain a unique feasible inviscid limit, up to arbitrary positioning of the
	interior standing shock. 
	A refined analysis taking into account exponential decay rates $e^{-r_0|x|/\nu}$ and
	$e^{-r_1|x|/\nu}$ of the viscous shock as $x\to -\infty$ and $x\to +\infty$, respectively,
	combined with the linearization $[(\bar u^2-\bar c^2)\Delta \rho]=0$ 
	of Rankine-Hugoniot relation $[m^2/\rho+ p(\rho)]=0$, where $[\cdot]$ denotes jump across the shock
	and $\Delta \rho$ denotes change in $\rho$ due to truncation of the (infinite-extent) viscous shock
	at boundaries $x=0,1$, we obtain asymptotics 
	$r_0 x_s= r_1(1-x_s)+ O(\nu)$, or
	\be\label{shockloc}
	 x_s(\nu)= \frac{r_1}{r_0 +r_1}  + O(\nu)
	\ee
	as $\nu\to 0$ toward the limiting shock location $x_s(0)=r_0/(r_0+r_1)$.  Similarly,
	the single left- or right-boundary layer solutions must be adjusted slightly due to exponential
	tails, in order to reach the correct value at the finite endpoint of the interval.
	We omit the details.

	Putting these pieces back together, we obtain convergence of viscous to inviscid solution
	in $L^1$ at rate $O(\nu)$ for solutions containing shocks and noncharacteristic boundary layers.
	The analysis of double layer solutions formed by concatenation of characeristic left and right boundary
	layers is somewhat more involved.
	Noting that $\psi\sim (\rho-\rho_*)^2$ near $\rho_*$ in the boundary layer profile ODE,
	and splitting into regions where $(\rho-\rho_*)^2 \lesssim b$ and 
	$(\rho-\rho_*)^2 \gtrsim b$, we find after a brief calculation that $|\rho-\rho_*|$ varies 
	from $0$ to $\sqrt{b}\sim \nu$ on an $x$-interval of order-one length and afterward grows like $\nu/x$,
	giving a total $L^1$ convergence error $\sim \nu + \int_\nu^1 (\nu/x)dx \sim \nu \log \nu $.
	Other $L^p$ norms, $1<p<\infty$, go similarly.
\end{proof}

\subsubsection{Hyperbolic dynamics}\label{s:hypdesc}
A very interesting further question is the hyperbolic description of {\it dynamics} in the inviscid limit,
in particular whether one may extend the formal description of Gisclon--Serre \cite{GS94,G96,GMWZ10}
for the half-line $[0,+\infty)$, based on viscous profiles, to the case of a bounded interval. 
In this approach, one imposes on the interior of the set the usual hyperbolic description, with jump
conditions given by the requirement that there exist a corresponding viscous shock profie connecting the
endstates of the jump.  Analogously, one requires at the boundary that solutions lie in the ``reachable set''
of limiting states as $x\to +\infty$ of a viscous boundary layer profile
satisfying the viscous boundary conditions at $x=0$. For {\it noncharacteristic} boundary layers,
this description may be rigorously validated under very general circumstances \cite{G96,GMWZ10},
both in the sense that it provides a well-posed hyperbolic problem, and that the solution of this problem
is the limit of viscous solutions. 

In the present, bounded interval case, the appearance of characteristic boundary layers complicates matters.
For illustration, considering again the case of isentropic gas dynamics \eqref{isogas} with viscous
inflow/outflow boundary conditions specifying 
\be\label{vBC}
(\hat{\rho}(0), \hat{u}(0), \hat{u}(1))=(\rho_0, u_0, u_1); \qquad \rho_0, u_0, u_1 >0.
\ee

Then, the classification of boundary layers in the
proof of Proposition \ref{iso_zoo} shows that the left inviscid boundary conditions induced by ``reachability''
criterion of Gisclon-Serre are given for ``supersonic'' states $\rho(0) \leq \rho_*$ by the 
full Dirichlet conditions
\be\label{superlBC}
(\rho(0),  u(0))= (\rho_0, u_0), \qquad \rho_0\leq \rho_*,
\ee
and for ``subsonic'' states $u(0)\leq \sqrt{p'(\rho(0))}$ by 
\be\label{sublBC}
\rho(0) u(0)= m_0:= \rho_0 u_0, \qquad \rho(0) \geq \rho_0^\dagger,
\ee
where $\rho^\dagger$ denotes the conjugate point connected to 
$\rho$ by a standing viscous shock profile with $\rho u=m_0$.
Here, $\rho(0),u(0),u(1)$ refer to interior hyperbolic solutions.

Different from the small-amplitude case treated in \cite{GS94},
both of these conditions are on a restricted range depending on $\rho_0$, outside of which states 
are {\it disallowed}.
Note, if $\rho_0$ and $\rho(0)$ are nearby and far from $\rho_*$, then $\rho(0)<\rho_*$ implies $\rho_0<\rho_*$, 
necessarily.
Similarly, $\rho(0)$ subsonic implies $\rho_0>\rho_*$, hence $\rho_0^\dagger \leq \rho_*$ and so
$\rho(0) \geq \rho_* \geq \rho_0^\dagger$ always. Thus, the restrictions in 
\eqref{superlBC}(ii) and \eqref{sublBC}(ii)
are only for large-amplitude perturbations, so long as all states are bounded away from characteristic values.

Meanwhile, for subsonic states $u(1) \leq \sqrt{p'(\rho(1))}$ (which serve only as repellors in backward $x$ flow, so
admit only trivial, constant boundary-layer connections), 
the induced right inviscid boundary conditions are given, setting $m_0=\rho(1)u(1)$, by
\be\label{subrBC}
u(1)= u_1 \, (>0), 
\ee
in agreement with the number of incoming (hyperbolic) characteristic modes to the domain.
For supersonic states $u(1)\geq \sqrt{p'(\rho(1))}$, for which all characteristic modes leave
the domain, the state $(\rho(1), u(1))$ is an attractor in backward $x$ for the boundary layer profile ODE
with appropriate constant $b$, connecting to all states $\rho_1 \leq \rho(1)^\dagger$.
Thus, there are {\it no imposed right boundary conditions}, in agreement with
the number (zero) of incoming modes, other than the ``range'' condition
\be\label{superrBC}
\rho_1:=m_1/u_1 \leq \rho(1)^\dagger, \qquad m_1:=\rho(1)u(1)
\ee
on the set of allowable $u(1)$. This is always satisfied for $u(1)\leq u_1$, but for $u(1)>u_1$
imposes a strictly stronger upper bound on $\rho(1)$ than subsonicity.

Here, we have considered in our discussion of right boundary conditions 
only states $u(1)>0$; the treatment of negative values would involve also
``$2$-family'' boundary layers with $m<0$, hence additional complexity.

As all conditions are open ones,
for solutions lying near a {\it noncharacteristic} steady solution,
the operant boundary conditions are of the standard type considered in hyperbolic initial boundary value theory,
hence in this local setting {\it the formal description of Gisclon-Serre may indeed be extended sensibly to the
case of a bounded interval,} as follows also by the original observations of \cite{GS94,G96,GMWZ10}.
However, the range-restrictions on these boundary conditions are not
not consistent with standard noncharacteristic hyperbolic theory, but rather a new ``obstacle'' type 
boundary condition.
Thus, both near the characteristic double layer type solutions, for which \eqref{sublBC} may enter, or
in the gobal, large-amplitude setting where \eqref{superrBC} comes into play, 
it seems a very interesting open problem whether such a boundary condition
can determine a reasonable hyperbolic flow.
We emphasize for the case of double layer solutions that {\it even small perturbations of the background
steady state lead to consideration of nonstandard boundary condition \eqref{sublBC}}. 
This might be called ``transcharacteristic-type,'' bridging as it does between sub- and supersonic boundary
conditions, and seems one of the more interesting aspects of the investigations of this section.

More generally, the questions of global existence for large-amplitude data for the viscous problem on 
the interval, and convergence to a vanishing viscosity limit seem very interesting,
especially in the isentropic case where the entropy-based method of {\it compensated compactness} 
has so been successful on the whole-line problem \cite{CP10,D83,S86}.
One may ask in particular whether or not boundary entropy conditions as in \cite{DL88} might agree 
with the standard and nonstandard boundary conditions derived by viscous profile considerations above.

\medskip

{\bf The left boundary Riemann problem.}
Let us go a bit further in consideration of the transcharacteristic-type boundary conditions
\eqref{sublBC}--\eqref{superrBC},
and their role in existence/well-posedness of hyperbolic solutions near a constant characteristic
solution $(\bar \rho, \bar u)\equiv (\rho_*,u_*)$, $u_*=\sqrt{\rho_*}$.
Recall that linearized and nonlinear stability hinge upon solvability of the {\it boundary Riemann problems}
at left and right boundary of the interval, i.e., solvability on the half-line of the problem with constant initial
data together with the given boundary conditions.  For definiteness, let us first discuss the left boundary,
renoting the constant data to the right of the boundary as $(\rho_R, u_R)$. Then, the solution if it exists
consists of a left state $W_L=(\rho_L, u_L)$ next to the boundary and satisfying the boundary conditions, connected
by a (possibly trivial, or zero-strength) $1$-shock or -rarefaction to a middle state $(\rho_M, u_M)$, 
which is in turn connected by a $2$-shock or -rarefaction to $(\rho_R,u_R)$.

We now reverse the point of view of the previous discusion and fix boundary data $\rho_0, u_0$, asking in turn
what states $(\rho_L, u_L)$, $(\rho_M, u_M)$ and $(\rho_R, u_R)$ it may be connected to.
The set of connectible states $(\rho_L, u_L)$ is exactly the admissible set $\mathcal{E}$ defined in \cite{GS94}.
We define the set $\mathcal{E}'$ to be the ``composite'' admissible set of states $(\rho_M,u_M)$
reachable by a combination of boundary-layer and $1$-wave solutions.

(i) {\it Subsonic case ($\rho_0\geq \rho_*$). }
A key observation is to notice that for any subsonic states $(\rho_L, u_L)$
lying on the admissible boundary-layer curve $\rho u=m_0=\rho_0u_0$, the $1$-characteristic has strictly negative
speed leading out of the domain, hence there are no admissible $1$-waves in associated boundary Riemann patterns.
On the other hand, at the extreme, characteristic point $(\rho_L, u_L)=(\rho_*,u_*)$ bounding the admissible
set, $u_*=\sqrt{P'(\rho_*)}$, the $1$-characteristic speed is zero, and thus one may adjoin to this state 
a $1$-rarefaction wave moving to the right into the domain, since all characteristics in the Riemann fan will have
speed $\geq 0$. One cannot adjoin a $1$-shock however, as its speed would be strictly less than that of the left-most
$1$-characteristic speed, which would be zero.
Thus, the set of admissible middle states $(\rho_M,u_M)$ consists precisely of portion of the admissible curve
$\rho u=m_0$ with $\rho \geq \rho_*$, {\it concatenated with the $1$-rarefaction curve emanating from $\rho_*$,} 
also pointing in the direction of decreasing $\rho$.

\bl\label{concatlem}
For $P', P''>0$, and $\rho_0>\rho_*$, the composite admissibility set $\mathcal{E}'$ is a globally $C^1$ decreasing
graph of $u$ over $\rho\in \R^+$, 
tangent at $(\rho_*,u_*)$ to the $1$-Hugoniot curve through $(\rho_*,u_*)$, and transversal at every point to
the $2$-Hugoniot curve originating from that point.
\el

\begin{proof}
	It is a standard fact \cite{Sm} that the $1$-rarefaction curve through $(\rho_*,u_*)$
	is tangent to the $1$-Hugoniot curve through $(\rho_*,u_*)$, consisting of states $(\rho, u)$
	connected to $(\rho_*,u_*)$ by Rankine--Hugoniot conditions 
	\be\label{RH}
	s(\rho- \rho_*)= \rho u - \rho_*u_*,\quad
	s(\rho u - \rho_*u_*)= \frac12 (\rho u^2- \rho_* u_*^2 + P(\rho)- P(\rho_*),
	\ee
	where $s$ is the associated shock speed.
	For isentropic gas dynamics under the assumptions $P',P''>0$, the solution may be expressed \cite{Sm} as 
	global $C^1$ functions $u(\rho)$, $s(\rho)$,
	with $s(\rho_*)$ equal to the characteristic speed at $\rho_*$, or zero.
	Thus, in the vicinity of $\rho_*$, $s(\rho-\rho_*)= O(|\rho-\rho_*|^2)$, and so, rearranging
	\eqref{RH}(i), we have $\rho u = m_0 + O(|\rho-\rho_*|^2)$, or
	$$
	 u = \frac{m_0}{\rho} + O(|\rho-\rho_*|^2), 
	 $$
	 giving the asserted tangency at $\rho_*$ to the boundary admissibility curve $u=m_0/\rho$.

	 Transversality follows, similarly as for transversality of $1$- and $2$-Hugoniot curves in the study
	 of the initial value Riemann problem \cite{Sm} from the fact that both are graphs over $\rho$,
	 with $u$ strictly decreasing on the first and increasing for the second.
\end{proof}

(ii) {\it Supersonic case ($\rho_0\leq \rho_*$). }

\bl\label{concatlemii}
For $P', P''>0$, and $\rho_0\leq \rho_*$, the composite admissibility set $\mathcal{E}'$ is decreasing,
$C^1$ on $\rho \gtrless \rho_0^\dagger$, continuous at $\rho=\rho_0^\dagger$,
and transversal at every point to the $2$-Hugoniot curve originating from that point.
\el

\begin{proof}
Similarly as in the previous case, we have that $\mathcal{E}$ consists of the single supersonic state
	$(\rho_L,u_L)=(\rho_0, u_0)$ together with the subsonic states $\rho_L\geq \rho_1^\dagger$;
in particular, it is disconnected.
	However, $1$-shocks from $(\rho_0,u_0)$ to $(\rho,u)$ have positive speed, hence are admissible in
	a boundary Riemann pattern, for $\rho\leq \rho_0^\dagger$, with boundary $\rho_0^\dagger$
	representing the point where shock speed is exactly zero. But, this point joins continuously to $\mathcal{E}$,
	giving continuity.  Moving in the other direction along the $1$-Hugoniot curve, with $\rho\leq \rho_0$,
	gives right-moving rarefaction waves, also admissible in boundary rarefaction patterns.  Thus, the
	$1$-Hugoniot curve for $\rho\leq \rho^\dagger$ joined with $\mathcal{E}$ gives the described curve
	$\mathcal{E}'$.
	One may check that the two curves are {\it not} tangent at $\rho=\rho_0^\dagger$, however, as
	the shock curve satisfies at the characteristic point $s=0$ the derivative
	formula 
	$\dot m = \dot s(\rho-\rho_0)\neq 0$, whereas the admissibility curve satisfies $\dot m\equiv 0$,
	where $m=\rho u$.
\end{proof}

\bc\label{BCLcor}
Assuming $P',P''>0$, for any right state $(\rho_R,u_R)$, there is a unique middle state 
$(\rho_M,u_M)=\Phi(\rho_R,u_R)$
lying on the composite admissibility curve $\mathcal{E}'$, hence
the left boundary Riemann problem has a unique \emph{global} solution.
Moreover, $\Phi$ is piecewise $C^1$, with discontinuous derivative along a single curve.
\ec

\begin{proof}
	From the fact that the $2$-Hugoniot curve $u_2(\rho)$ through any $(\rho_R,u_R)$ 
	is strictly increasing for $\rho \in \R^+$,
	with limits $0$ and $+\infty$ as $\rho\to 0$ and $\rho \to +\infty$, and the fact that the graph of 
	$\mathcal{E}' $ is strictly decreasing, with limits $u\to +\infty$ and $u\to 0$ as $\rho \to 0$,
	$\rho \to +\infty$, we find that there is a unique intersection between the $2$-Hugoniot curve 
	and $\mathcal{E}'$. This gives existence and uniqueness; $C^1$ depedence then follows by the Inverse
	Function Theorem by $C^1$ regularity and transversality 
	of the graphs of $\mathcal{E}$ and the $2$-Hugoniot curve, everywhere except on the forward $2$-Hugoniot
	curve emanating from $(\rho_0^\dagger, m_0/\rho_0^\dagger)$.
\end{proof}

\medskip

{\bf The right boundary Riemann problem.}
We now study the right boundary problem, with data $u_1$. To match the previous discussion, we will treat 
$\rho_1$ as a fixed parameter, and define for each $\rho_1$ the admissible set $\mathcal{F}$ of
states $(\rho_R,u_R)$ connectible to $(\rho_1, u_1)$ by a  boundary layer. As in the previous case, we will
take all velocities $u$ to be positive. Thus, $2$-waves move to the right out of the domain and cannot
be part of any boundary Riemann pattern at $x=1$. We define the composite admissible set $\mathcal{F}'$
to be the set of states $(\rho_L,u_L)$ reachable by a (possibly trivial) 
leftgoing $1$-shock or -rarefaction from $\mathcal{F}$.

(i) {\it Subsonic case ($\rho_1 \geq \rho_*$). }
This case is analogous to left boundary case (ii).
For the right boundary layer problem, we consider backward flow in the boundary-layer ODE, for which 
subsonic equilibria are repellors, and supersonic equilibria attractors. Thus, nearby $(\rho_1,u_1)$,
the only boundary-layer connection is the trivial one, to itself, or $(\rho_R,u_R)=(\rho_1,u_1)$.
On the other hand, $(\rho_1,u_1)$ may be connected to any supersonic state $(\rho_R,u_R)$ with
$\rho_R\leq \rho_1^\dagger$, with the limiting point $\rho_R=\rho_1^\dagger$ corresponding to
a zero-speed $1$-shock. Thus, $\mathcal{F}$ consists of the disconnected set of points on the graph
$\rho u=m_1:=\rho_1u_1$ lying over $\rho_R=\rho_1$ and $\rho_R\leq \rho_1^\dagger$.

There are no preceding $1$-waves possible for the supersonic states $\rho_L\leq \rho_1^\dagger$, as these would be
right-moving out of the domain. The subsonic state $(\rho_1,u_1)$ however, admits a preceding $1$-rarefaction from
any point $(\rho_L,u_L)$ on the $1$-Hugoniot curve through $(\rho_1,u_1)$ with $\rho_L \geq \rho_1$, and
a preceding $1$-shock from any point $(\rho_L,u_L)$ on the $1$-Hugoniot curve through $(\rho_1,u_1)$ with 
$\rho_L\geq \rho_1^\dagger$, terminating at the point $\rho_L= \rho_1^\dagger$ corresponding to a zero-speed
$1$-shock.

\bl\label{Rconcatlemi}
For $P', P''>0$, $u_1>0$, and $\rho_1\geq \rho_*$, the composite admissibility set $\mathcal{F}'$ is decreasing,
$C^1$ on $\rho \gtrless \rho_1^\dagger$, continuous at $\rho_0^\dagger$ to the $1$-Hugoniot curve 
through $(\rho_0,u_0)$, and transversal at every point to the $2$-Hugoniot curve originating from that point.
\el

\begin{proof}
Essentially identical to the proof of Lemma \ref{concatlemii}.
\end{proof}

(ii) {\it Supersonic case ($\rho_1\leq \rho_*$). } 
In this case, which is analogous to left boundary case (i), 
$(\rho_1,u_1)$ may be connected by boundary layer to any supersonic state, i.e., to any
state $(\rho_R,u_R)$ on the curve $\rho u\equiv m_1$ with $\rho_R\leq \rho_*$, 
with the limiting point $\rho_R=\rho_*$ corresponding to a characteristic boundary layer.
The set $\mathcal{F}$ thus consists of all such supersonic states.
From strictly supersonic states, no preceding $1$-wave is possible, since it would be right-moving out
of the domain. However, the characteristic boundary point $\rho_R=\rho_*$ may be preceded by a $1$-rarefaction
from any point $\rho_L\geq \rho_*$.  The composite admissible set $\mathcal{F}'$ thus consists of the
concatenation of the boundary-layer curve up to $\rho=\rho_*$ with the $1$-rarefaction curve emanating
from $(\rho_*, m_1/\rho_*)$.

\bl\label{Rconcatlemii}
For $P', P''>0$, $u_1>0$, and $\rho_1\leq \rho_*$, the composite admissibility set $\mathcal{F}'$ is decreasing,
globally $C^1$, and tangent at $\rho=\rho_*$ to the $1$-Hugoniot curve through $(\rho_*, m_1/\rho_*)$.
\el

\begin{proof}
Essentially identical to the proof of Lemma \ref{concatlem}.
\end{proof}

\bc\label{BCRcor}
Assuming $P',P''>0$, $u_1>0$, any left state $(\rho_L,u_L)$, $u_L>0$, lies on
lying on the composite admissibility curve $\mathcal{F}'$ for a unique boundary value
$$
(\rho_1,u_1)= (\phi(\rho_L,u_L,u_1),u_1).
$$
Moreover, $\Phi$ is piecewise $C^1$ with discontinuous derivative along a 2D surface in $(\rho_L,u_L,u_1)$.
\ec

\begin{proof}
	By Galillean invariance, the $1$-Hugoniot curves through $(\rho_1,u_1)$ are translates of each other
	in $u$ and cover the $\rho$-$u$ plane in one-to-one fashion
	as $u_1$ is varied. Likewise, the curves $\rho u= m_1$ cover
	the plane in one-to-one fashion as $m_1$ is varied. The case (i) cuts these families along the line
	$\rho=\rho_1^\dagger$ and glues left and right halves together; moreover, the cutoff curve may be seen
	to intersect curves in each family precisely once. At the limiting case $\rho_1=\rho_*$, this cutoff is
	precisely at $\rho_*$.
	Case (ii) is a similar gluing of the same two families of curves, but this time cut at $\rho_*$,
	again with a single intersection of cutting curve with individual curves in each family.
	Thus, the two halves together cover the plane in one-to-one fashion, giving a unique solution of
	the right boundary Riemann solution.
	By the Implicit Function Theorem, moreover, this is $C^1$ except where $(\rho_1,u_1)$
	is subsonic, with $\rho_1u_1= m_1=\rho_L u_L$ and $\rho_1^\dagger = \rho_L$, tracing out a single
	curve with one boundary of discontinuity in the $(\rho_L, u_L)$ plane, with vertex at the characteristic
point $\rho_L u_L/u_1= \rho_*$.
Taking the union over $u_1$, we are done.
\end{proof}

\br\label{comrmk}
The constructions of Lemma \ref{concatlem}-\ref{Rconcatlemii} are 
reminiscent of composite wave constructions in initial value Riemann solutions for nonconvex conservation laws
\cite{Da,L}.
\er

{\bf Well-posedness near steady solutions.}
Reviewing Corollaries \ref{BCLcor} and \ref{BCRcor} we see that small perturbations of steady hyperbolic solutions
lead to boundary Riemann problems in the $C^1$ part of the solution operator. In particular, in the new case
of the double characteristic boundary-layer solution, we have $(\rho_0,u_0)$ subsonic and
$(\rho_1, u_1)$ supersonic, with middle state $(\hat \rho, \hat u)\equiv (\rho_*,u_*)$ characteristic, 
where $\rho u\equiv m_0= \rho_0u_0$ for all states. Thus, small perturbations of the interior solution
lead to boundary Riemann problems in the $C^1$ cases described in Lemmas \ref{concatlem} and \ref{Rconcatlemii}.

This yields well-posedness in the Sobolev sense of Kreiss and Majda, as considered in \cite{GS94} of small
perturbations of steady hyperbolic solutions of the inflow/outflow problem.
Behavior for global data remains a very interesting open problem, as it is also in more standard cases.
The effect of discontinuous derivatives in the boundary Riemann solver for small data solutions is another
very interesting open question.

\medskip
{\bf Entropy boundary conditions.} In \cite{DL88} there is derived for general conservation 
laws \eqref{conservation_law} the (left) entropy boundary inequality
\be\label{eineq}
q(U(0))- q(U_0)\leq d \eta(U_0)(U(0)-U_0),
\ee
for any entropy/entropy flux pair $\eta$ (not necessarily convex)/$q$ such that $\Re(d^2\eta(U) B(U))\geq 0$,
also called ``$B$-convex.''
A symmetric inequality holds for the right boundary.
As shown in \cite[Prop. 2]{GS94}, 
the Gisclon-Serre type boundary condition obtained from viscous boundary-layer criteria
necessarily satisfies \eqref{eineq}.
In \cite[Prop. 1.1]{DL88}, it is shown in the linear case that conditions \eqref{eineq} agree with a formulation 
in terms of Riemann problems that enforces unique solutions, hence the two concepts must agree.
However, in the general nonlinear case they clearly differ, as there does not necessarily exist any 
$B$-convex entropy.  For the case of isentropic gas dynamics it was conjectured in \cite{DL88} that they agree, but
to our knowledge this question has up to now neither been proved nor disproved.

Equivalence of the two types of boundary condition would be extremely interesting, as it would imply that {\it any} 
weak vanishing viscosity solution would satisfy Gisclon-Serre type boundary conditions, 
whereas in \cite[Prop. 1]{GS94} it is shown that smooth (noncharacteristic) Gisclon-Serre type solutions are vanishing viscosity limits of  {\it some} sequence of solutions.

\subsection{The standing shock limit}\label{s:standing}
A simple case in which the zero-viscosity limit can be completely carried out is that
of the ``standing-shock limit'' generalizing the study of \cite{Z1} in the case of the half-line.
This consists of the study of a stationary viscous $n$-shock $\hat U^\eps(x)=\bar U \left( \frac{1}{\eps} (x-\frac{1}{2}) \right)$ of \eqref{vvisc},
solving \eqref{vsteady} for all $\eps>0$, with respect to its ``own'' boundary conditions, i.e.
$$
U_0=\hat U^\eps(0),\quad U_1^{II}=\hat U^\eps(1).
$$
We consider this for the general class of system \eqref{conservation_law},\eqref{BC2} under assumptions (\textbf{H0})-(\textbf{H3}) plus the additional assumption used in \cite{Z1}:

\medskip
\medskip
\noindent (\textbf{H4}) the eigenvalues of $df(U_{0})$ and $df(U_{1})$ are nonzero.\footnote{By (\textbf{H3}) the eigenvalues of $df(U)$ are real and semi-simple since $df(U)$ is symmetric for the inner product associated to $S(U)$.}
\medskip
\medskip

Converting by $x\to \frac{x}{\eps}$ to the large-interval limit and
following the arguments of \cite{Z1} word for word, we find that, away from $\lambda=0$, the spectra
of the linearized operator about $\hat U^X(x) :=\hat U^\eps (\eps x)$ on $\Re \lambda \geq 0$ approaches, 
as $\eps = \frac{1}{X} \to 0^+$, the direct sum of the spectra of the viscous shock $\bar U$ as a solution on the whole line
plus the spectra of the constant boundary layers on the half-lines $(0,+\infty)$ and $(-\infty,1)$ 
determined by the values of $\hat U^\eps$ at $0$ and $1$ with the boundary conditions for
the steady problem at $x=0$ and $x=1$.
As the latter constant layers have been shown to be spectrally stable \cite{GMWZ}, this implies that
the spectra of $\hat U^X$ converges away from $\lambda=0$ to that of $\bar U$ as $X\to \infty$.
Rescaling, we find that, outside $B(0,c\eps^{-1})$, any $c>0$, 
the spectra of $\hat U^\eps$ are well-approximately by $\eps^{-1}$ times the spectra of $\bar U$.
We record this observation as the following proposition.

\begin{proposition}[Spectral decomposition]\label{sprop1}
For viscous $n$-shock solutions $\bar U$ of systems \eqref{conservation_law} satisfying (\textbf{H0})--(\textbf{H4}), 
the corresponding standing-shock family $\bar U^\eps$ contains no spectra $\Re \lambda \geq 0$ 
outside a ball $B(0,c\eps^{-1})$ for $\eps>0$ sufficiently small, for any choice of $c>0$, 
if and only if $\bar U$ is spectrally stable, i.e., has no spectra $\Re \lambda \geq 0$ with $\lambda \neq 0$.
In particular, if $\bar U$ is spectrally unstable, then $\hat U^\eps$ is spectrally unstable for $\eps$ sufficiently
small.
\end{proposition}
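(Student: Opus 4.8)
The plan is to reduce the claim to a statement about the large-interval limit by rescaling, and then invoke the Evans-function decomposition of \cite{Z1}. First I would rescale via $y = x/\eps$, which converts the stationary shock $\hat U^\eps(x) = \bar U(\eps^{-1}(x-\tfrac{1}{2}))$ on $[0,1]$ into the fixed profile $\hat U^X(y) = \bar U(y - \tfrac{X}{2})$ on the long interval $[0,X]$, $X = \eps^{-1}$, solving the unrescaled steady equation. Carrying the eigenvalue parameter through the rescaling of \eqref{vvisc} shows that $\lambda$ for the $\eps$-problem corresponds to $\tilde\lambda = \eps\lambda$ for the large-interval problem; hence spectrum of $\hat U^\eps$ in $\{|\lambda| \ge c\eps^{-1}\}$ corresponds exactly to spectrum of $\hat U^X$ in $\{|\tilde\lambda| \ge c\}$. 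Thus the proposition is equivalent to the assertion that, as $X \to +\infty$, $\hat U^X$ has no spectrum in $\{\Re\tilde\lambda \ge 0,\ |\tilde\lambda| \ge c\}$ if and only if $\bar U$ is spectrally stable.

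The core step is the spectral decomposition, following the arguments of \cite{Z1} essentially word for word. Since $\bar U$ converges exponentially to its endstates $U_\pm$, on the three relevant scales -- the shock layer near $y = X/2$ and the two endpoint layers near $y = 0$ and $y = X$ -- the coefficients of the rescaled first-order system \eqref{firstorder} are exponentially close to those of, respectively, the whole-line shock $\bar U$, the constant left boundary layer $U_-$ on $(0,+\infty)$, and the constant right boundary layer $U_+$ on $(-\infty,1)$. Using the gap and conjugation (tracking) lemmas of \cite{Z1}, the Evans function $D^X(\tilde\lambda)$ for the interval problem factors, uniformly on compact subsets of $\{\Re\tilde\lambda \ge 0\}\setminus\{0\}$, as
\be
D^X(\tilde\lambda) = D_{\bar U}(\tilde\lambda)\, D_0(\tilde\lambda)\, D_1(\tilde\lambda)\,\big(1 + o(1)\big), \qquad X \to \infty,
\ee
where $D_{\bar U}$ is the whole-line shock Evans function and $D_0, D_1$ are the Evans functions of the two constant boundary layers. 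The dimension counts of the stable/unstable subspaces at $U_\pm$ match (by the $n$-shock assumption together with (\textbf{H4})), so the three factors account for the full determinant with no spurious contribution.

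Next I would invoke stability of the endpoint layers: these are constant states satisfying the steady boundary conditions, hence spectrally stable by \cite{GMWZ} (equivalently by Theorem \ref{symmthm} applied to constant data), so $D_0, D_1 \neq 0$ on $\{\Re\tilde\lambda \ge 0\}$. Therefore the zeros of $D^X$ on $\{\Re\tilde\lambda \ge 0\}\setminus\{0\}$ are, for $X$ large, exactly perturbations of those of $D_{\bar U}$. If $\bar U$ is spectrally stable, $D_{\bar U}$ has no such zeros, and uniform convergence plus a compactness argument yields $X_0$ with $D^X$ nonvanishing on $\{\Re\tilde\lambda \ge 0,\ |\tilde\lambda| \ge c\}$ for $X \ge X_0$; rescaling gives one direction. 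Conversely, if $\bar U$ has a zero $\tilde\lambda_*$ with $\Re\tilde\lambda_* > 0$, then Rouch\'e applied to the factorization produces a nearby zero of $D^X$ for $X$ large, hence unstable spectrum of $\hat U^\eps$ near $\tilde\lambda_*/\eps$; this gives the reverse direction and the final ``in particular'' clause.

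The main obstacle is the uniform factorization up to the boundary of the region, i.e., controlling the decomposition as $\tilde\lambda \to 0$ and uniformly up to the imaginary axis. Near $\tilde\lambda = 0$ the stable and unstable subspaces of the endstates lose their spectral gap -- the essential spectrum of $\bar U$ touches the origin and the translational zero mode appears -- so the tracking estimates degenerate; this is precisely why the ball $B(0, c\eps^{-1})$ must be excised and why the statement concerns only spectrum outside it. Establishing the exponential decoupling with constants uniform in $\tilde\lambda$ on the relevant compact sets, and verifying that the matching dimension counts yield exactly the three advertised factors, is the technical crux, while the remaining rescaling bookkeeping and the Rouch\'e/continuity argument are routine.
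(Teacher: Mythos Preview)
Your proposal is correct and follows essentially the same route as the paper: rescale to the large-interval problem, invoke the decomposition arguments of \cite{Z1} to factor the spectrum into that of the whole-line shock $\bar U$ plus the two constant boundary layers, use \cite{GMWZ} for stability of the latter, and rescale back. The paper presents this only as a brief paragraph preceding the proposition (``following the arguments of \cite{Z1} word for word''), so your added detail on the Evans-function factorization, the Rouch\'e argument for the converse, and the degeneration near $\tilde\lambda=0$ simply spells out what the paper leaves implicit.
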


Proposition \ref{sprop1} gives no information about the corresponding stability index and uniqueness or nonuniqueness.
However, this is provided definitively by the following result.

\begin{proposition}[Nonvanishing of the stability index]\label{sprop2}
For viscous $n$-shock solutions $\bar U$ of systems \eqref{conservation_law} satisfying (\textbf{H0})--(\textbf{H4}), 
the Evans function $D^\eps$ associated with the corresponding standing-shock family $\bar U^\eps$ satisfies 
$D^\eps(0)\neq 0$ for $\eps>0$ sufficiently small.
\end{proposition}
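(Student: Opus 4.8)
The plan is to reduce the claim to a transversality statement for the viscous shock connection via the zero-frequency identification of Lemma \ref{ZSlem}, and then to evaluate the resulting quantity in the large-interval limit by a gap-lemma / exponential-dichotomy factorization as in \cite{Z1}. By Lemma \ref{ZSlem} we have $D^\eps(0)=\det B_{II,II}\,\det d\Phi^\eps(C_2)$, so since $\det B_{II,II}\neq 0$ it suffices to show that the standing-shock profile is a transverse solution of the truncated profile boundary value problem; equivalently, that the linearized steady operator about $\bar U^\eps$, subject to the homogeneous boundary conditions $V(0)=0$, $V_{II}(1)=0$, has trivial kernel for all sufficiently small $\eps$.

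First I would rescale by $x\mapsto x/\eps$ to the large-interval picture, on which $\bar U^\eps$ becomes the fixed whole-line profile $\bar U$ viewed on $[-\tfrac{1}{2\eps},\tfrac{1}{2\eps}]$, with endstates $U_\pm=\bar U(\pm\infty)$. Assumption (\textbf{H4}) guarantees that $U_\pm$ are hyperbolic rest points of the reduced ($U_{II}$) profile ODE with a spectral gap, so that on each half-interval the solution operator of the $\lambda=0$ variational equation admits exponential dichotomies. Using tracking / gap-lemma estimates I would then split $D^\eps(0)$, up to a nonvanishing normalizing factor, into three asymptotically decoupled contributions: a left boundary-layer determinant associated with the unstable subspace at $U_-$ near $x=0$, a right boundary-layer determinant associated with the stable subspace at $U_+$ near $x=1$, and a middle contribution carried by the shock transition near $x=1/2$. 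The two boundary-layer factors converge to the zero-frequency Evans determinants of the constant layers at $U_\pm$, which are nonzero by the spectral stability and nondegeneracy of constant noncharacteristic layers established in \cite{GMWZ}.

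The main obstacle is the middle factor, together with the fact that translation invariance of the whole-line shock forces the whole-line Evans function to vanish at $\lambda=0$; the whole point is that truncation to a bounded interval breaks this invariance and replaces the vanishing whole-line value by the nonzero transversality coefficient of the connection. Concretely, the translation mode $\bar U'$ lies in the kernel of the whole-line linearized profile operator and decays like $e^{-\theta|x|/\eps}$ at both ends, so it \emph{nearly} satisfies the truncated homogeneous boundary conditions; I must show it cannot be corrected to an exact kernel element. Dimension counting for a Lax $n$-shock shows that the unstable manifold of $U_-$ and the stable manifold of $U_+$ meet transversally in the one-dimensional span of $\bar U'$, so the associated transversality coefficient $\gamma$ is nonzero \cite{GZ,MaZ}. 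Tracking the matching of the left and right solution spaces across the transition then yields that the middle factor converges to $\gamma$, not to the whole-line value $0$: the exponentially small boundary residuals of $\bar U'$ enter only through the normalization and do not produce a genuine kernel element.

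Combining the three nonzero limits shows that the normalized determinant $D^\eps(0)$ has a nonzero limit as $\eps\to 0$, hence $D^\eps(0)\neq 0$ for $\eps$ small. I would finally emphasize that, in contrast to Proposition \ref{sprop1}, this argument uses only transversality of the connection and nondegeneracy of the endstate layers, and therefore holds independently of whether $\bar U$ is spectrally stable—consistent with $D(0)$ being determined by low-frequency, topological data alone.
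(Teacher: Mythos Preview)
Your proposal is plausible and in the right spirit, but it takes a genuinely different route from the paper's proof, and the most delicate step in your plan---the one you flag as the ``main obstacle''---is precisely the step the paper's argument is designed to avoid.

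The paper does \emph{not} decompose $D^\eps(0)$ into left-layer, shock, and right-layer factors. Instead it rewrites the $\lambda=0$ eigenvalue system in the ``flux'' variables $(U_{II},F)$ with $F:=B(\hat U)U'-AU$, for which the second equation collapses to $F'=\lambda A^0 U=0$, i.e.\ $F$ is \emph{constant} at $\lambda=0$. The Evans determinant is then recast as a Wronskian in $(U_{II},F)$; by Abel's theorem it may be evaluated at any $x\in[0,1]$. Choosing $x=c$ just to the right of the left boundary and passing to the large-interval limit, the $(n-r)$ solutions shot from $x=0$ converge (after normalization) to the subspace of whole-line solutions decaying at $-\infty$, which is characterized exactly by $F\equiv 0$ with full rank in $U_{II}$, while the $n$ solutions shot from $x=1$ carry $F\equiv I_n$. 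The Wronskian is therefore block-triangular with nonzero diagonal blocks, and $D^\eps(0)\neq 0$ follows directly. No three-way matching is needed, and in particular the translation mode never has to be isolated and tracked.

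By contrast, your argument hinges on showing that the ``middle factor'' limits to the transversality coefficient $\gamma$ rather than to the whole-line Evans value $0$. That can be made to work, but it is exactly the step where naive product/decomposition formulae (cf.\ \cite{Z1,Z4}) tend to hand you the vanishing whole-line value, so you would have to set up the factorization carefully---choosing bases so that the translation mode is absorbed into the normalization rather than the determinant---before the claimed limit becomes rigorous. You also invoke transversality of the Lax $n$-shock connection as an input; note that dimension counting alone does not give transversality, so this is an additional (standard, but genuine) hypothesis that the paper's flux-variable argument does not need to isolate explicitly. In short: your route is workable but heavier, while the paper's flux-variable/Abel trick converts the problem into a single block-triangular determinant and bypasses the matching altogether.
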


\begin{proof}
We only sketch the proof, which belongs more to the circle of ideas in \cite{Z1} than those of the present paper.
We first write the eigenvalue system in ``flux'' variables $(u_{II}, F)$ as
\ba\label{fluxeval}
U_{II}'&= B_{22}(U)^{-1} (F_{II}+A_{11}U_{I}), \\
F'&=\lambda A^0 U,
\ea
where $F:=B(\hat U)U'-A U$ and $U_I= A_{11}^{-1}(F_I + A_{12}U_{II})$.
This yields for $\lambda=0$ in the second equation the simple dynamics $F\equiv \const$.

Next, we observe that the Evans function may be written equivalently as
$$
D^\eps(\lambda)=
\det \bp U_{II}^1 & \dots & U_{II}^r & 0 \\
F^1& \dots & F^r & I_n\ep|_{x=1},
$$
where $\bp U_{II}^j \\F^j\ep$, $j=1, \dots, r$ denote the solutions of \eqref{fluxeval} with initial conditions
\be\label{fluxini}
\bp U_{II}^1 & \dots & U_{II}^r  \\
F^1& \dots & F^r \ep(0)=
\bp 0 \\ B(0)\bp 0 \\ I_r\ep \ep
\ee
at $x=0$.  In turn, we may view this as a Wronskian
\be\label{fullevans}
D^\eps(\lambda)=
\det \bp U_{II}^1 & \dots & U_{II}^r & U_{II}^{r+1} & \dots & U_{II}^{n+r}  \\
F^1& \dots & F^r & F^{r+1} & \dots & F^{n+r} \ep|_{x=1},
\ee
where $\bp U_{II}^j \\F^j\ep$, $j=r+1, \dots,n+ r$ denote the solutions of \eqref{fluxeval} with initial conditions
\be\label{auxini}
\bp U_{II}^{r+1} & \dots & U_{II}^{r+n}  \\
F^{r+1}& \dots & F^{r+n} \ep(1)=
\bp 0 \\ I_n \ep
\ee
at $x=1$.

By Abel's theorem, vanishing or nonvanishing of the Wronskian \eqref{fullevans} at $x=1$ is determined by
vanishing or nonvanishing at any $x\in [0,1]$.
By the analysis of \cite{Z1}, we find that, at $x=c$ for any $c>0$ sufficiently small, the solutions
$\bp U_{II}^j \\F^j\ep$, $j=1, \dots, r$ originating from $x=0$ converge exponentially in $X:=1/\eps$
to the limiting subspace of solutions of \eqref{fluxeval} on the whole line decaying at $x=-\infty$,
which may be identified by the property $F\equiv 0$, hence also $\det (U_{II}^1, \dots, U_{II}^r)\neq 0$.
Recalling by the simple dynamics for $\lambda=0$ that 
$$
(F^{r+1}, \dots, F^{r+n})\equiv 
\bp  B(0)\bp 0 \\ I_r\ep \ep,
$$
we find that the Wronskian at $x=c$ converges exponentially in $X=\eps^{-1}$ to 
$$
\det (U_{II}^1, \dots, U_{II}^r)|_{x=c}\neq 0,
$$
hence $D^\eps(0)\neq 0$ for $\eps>0$ sufficiently small.
For further details, see \cite{Z1}.
\end{proof}

\br\label{standrmk}
The result of Proposition \ref{sprop2}, though proved by similar techniques, stands in striking contrast to
the results of \cite{Z1,SZ} in the half-line case, where the stability index was seen to change sign as parameters were
varied for (full) polytropic gas dynamics, despite stability of the underlying whole-line shock profile.
The standing-shock construction can nonetheless be useful in seeking instability/bifurcation,
however, through bifurcation from unstable to stable background shock; this strategy is used successfully,
e.g., in \cite{paper1}.
\er

\subsection{Large-viscosity/small interval asymptotics}\label{s:large_visc}
It seems interesting to ask also what are the asymptotics of solutions
as viscosity goes to infinity instead of zero, or,  equivalently, interval length goes to zero with viscosity fixed. Before studying the general case (under reasonable assumptions) we provides two examples that illustrate what can happen for such an asymptotic.

\subsubsection{The scalar case}\label{s:large_scalar}

We consider here the scalar case as in Section \ref{s:asymptoticI}. For a $\mathcal{C}^{1}$ function $f$, $\nu \geq 1$ and $u_{0},u_{1} \in \R$, we consider the unique solution $u^{\nu}$ (as proved in Proposition \ref{existence_steady}) of
\begin{equation}\label{steady_scalar_2}
\left\{
\begin{array}{l}
\nu \hat u^{\nu}_{xx} =(f(\hat u^{\nu}))_{x} \text{  ,   } 0<x<1,\\
\hat u^{\nu}(0)=u_{0} \text{  ,  } \hat u^{\nu}(1)=u_{1}.
\end{array}
\right.
\end{equation}
The following proposition provides the behavior of $\hat u^{\nu}$ as $\nu \to +\infty$.

\bpr
Let $\nu \geq 1$ and $u_{0},u_{1} \in \R$ and assume that $f$ is $\mathcal{C}^{1}(\R)$. Consider $\hat u^{\nu}$ the unique solution of \eqref{steady_scalar_2}. There exists a constant $C>0$ depending only on $f$, $u_{0}$ and $u_{1}$ such that for any $x \in [0,1]$,
\[
\left|\hat u^{\nu}(x) - \left( (u_{1}-u_{0})x + u_{0} \right) \right| \leq \frac{C}{\nu}.
\]
\epr

\begin{proof}
Integrating the equation from $0$ to $x \in [0,1]$, there exists a constant $C_{1}>0$ depending only on $f$, $u_{0}$ and $u_{1}$ so that
\[
|\hat u_{x}^{\nu}(x)-u_{x}^{\nu}(0)| \leq \frac{C_{1}}{\nu}.
\]
Since $\min(u_{0},u_{1}) \leq u^{\nu} \leq \max(u_{0},u_{1})$, $\hat u_{x}^{\nu}$ is therefore necessarily uniformly bounded with respect to $\nu \geq 1$. Thus $\nu \hat u_{xx}^{\nu}$ is uniformly bounded with respect to $\nu \geq 1$. Integrating two times, we obtain that the quantity $\nu (u_{1}-u_{0}- \hat u_{x}^{\nu}(0))$ is uniformly bounded with respect to $\nu \geq 1$. The desired estimate follows.
\end{proof}

\subsubsection{Full gas dynamics}\label{s:large_full}
For full gas dynamics \cite[Eq. (2.6)]{paper1}, \eqref{steady_conservation_law} becomes
\ba\label{prof}
\frac{\alpha}{u_0} u' &= c_1 + u + \Gamma \frac{e}{u},
\qquad
\frac{\nu}{u_0}e'= c_2 -c_1 u - \frac{1}{2} u^2 + e,
\ea
where $u>0$ is fluid velocity, $e>0$ specific internal energy, and
\be\label{link_c_derivative_at_0}
c_{1}=\frac{\alpha}{u_{0}} u'(0) - u_{0} - \Gamma \frac{e_{0}}{u_{0}} \text{ , } \quad
c_{2}=\frac{\nu}{u_{0}} e'(0) + \alpha u'(0) - e_{0} - \frac{1}{2} u_{0}^{2} - \Gamma e_{0}.
\ee
(Here, we have taken without loss of generality $\rho_0, u_0=1$, so that by the steady mass conservation
equation, $\rho u\equiv 1$, where $\rho$ is fluid density \cite{paper1}.)

Formally taking $\alpha$, $\nu$ to infinity, and dropping $O(1)$ terms, we obtain limiting equations
$\frac{\alpha}{u_0} u' = c_1$, $\frac{\nu}{u_0}e'= c_2 -c_1 u$ 
with
$c_{1}=\frac{\alpha}{u_{0}} u'(0)$ , $c_{2}=\frac{\nu}{u_{0}} e'(0)$, or
\ba\label{fprof}
\bar u' = u'(0), \qquad \frac{1}{u_0}\bar e'= \bar e'(0)  -\frac{\alpha}{\nu}\bar u'(0)  \bar u, 
\ea
giving exact solution
\be\label{lexact}
\bar u(x)=  u_0 + x(u_1-u_0), \;
\bar e(x)=x \Big( e_1- \frac{\alpha}{\nu}\frac{ u_1^2  - u_0^2}{2}\Big)
+ \frac{\alpha}{\nu}(u_1-u_0)\big( u_0 x + (u_1-u_0)x^2/2\big).
\ee

For $\eps>0$, set $E_{\eps} = \{(u,e) \in \mathbb{R}^2 \text{ , } \eps < u , e < \frac{1}{\eps} \}$.

\begin{proposition}\label{Lgas}
	For any fixed $\eps>0$ and profile \eqref{lexact} contained in $E_\eps$, 
	holding $r =\nu\alpha>0$ fixed, and taking $\alpha$, $\nu$ to infinity,
	there is for sufficiently large $(\alpha, \nu)$ a unique steady profile of \eqref{prof}
	lying in $E_\eps$, converging to the formal limit \eqref{lexact}
	in $H^1[0,1]$ at rate $O(\alpha^{-1})$.
\end{proposition}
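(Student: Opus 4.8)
The plan is to recast \eqref{prof} as a two-parameter shooting problem and treat it as a \emph{regular} perturbation of the explicitly solvable limit in the small parameter $\delta := \alpha^{-1}$, noting that the ratio $\alpha/\nu$ (which appears in \eqref{lexact}) stays bounded and $\nu^{-1}=O(\delta)$ throughout the limit considered. The integration constants $c_1,c_2$ are of size $O(\alpha)$ along the relevant profiles, so they are poor coordinates; instead I would use the rescaled shooting parameters $a := u'(0)$ and $b := e'(0)$, which remain $O(1)$ and are related to $(c_1,c_2)$ by the affine invertible change of variables \eqref{link_c_derivative_at_0}. Substituting \eqref{link_c_derivative_at_0} into \eqref{prof} and absorbing the $O(\alpha)$ pieces, the system takes the form
\begin{equation*}
u' = a + \delta F_1(u,e), \qquad e' = b + \tfrac{\alpha}{\nu}\,a\,(u_0-u) + \delta F_2(u,e), \qquad (u,e)(0)=(u_0,e_0),
\end{equation*}
where $F_1,F_2$ are smooth and bounded on $\overline{E_\eps}$ (the constraint $u\ge\eps$ keeps $\Gamma e/u$ and its derivatives controlled). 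At $\delta=0$ this integrates explicitly to a profile of exactly the form \eqref{lexact}, so the formal limit is recovered as the $\delta=0$ solution.

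Next I would analyze the limiting shooting map. Define $\mathcal S_\delta(a,b) := (u,e)(1;a,b,\delta)$; by smooth dependence of ODE solutions on parameters and initial data, $\mathcal S_\delta$ is $C^1$ jointly in $(a,b,\delta)$ so long as the trajectory remains in $E_\eps$. At $\delta=0$ the explicit solution gives $\mathcal S_0(a,b)=\big(u_0+a,\;e_0+b-\tfrac{\alpha}{2\nu}a^2\big)$, so the limit problem $\mathcal S_0(a,b)=(u_1,e_1)$ has the unique solution $(a_0,b_0)$ with $a_0=u_1-u_0$, and the Jacobian $D_{(a,b)}\mathcal S_0$ is lower triangular with unit diagonal, hence invertible. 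The implicit function theorem then yields, for all sufficiently small $\delta$, a unique $(a(\delta),b(\delta))=(a_0,b_0)+O(\delta)$ near $(a_0,b_0)$ solving $\mathcal S_\delta(a,b)=(u_1,e_1)$; since the hypothesis places \eqref{lexact} in the interior of $E_\eps$, for $\delta$ small the corresponding trajectory stays in $E_\eps$ by continuity, giving existence.

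To obtain uniqueness within the \emph{whole} set $E_\eps$ rather than merely near $(a_0,b_0)$, I would prove an a priori confinement estimate: any $E_\eps$-profile meeting the boundary conditions has $(a,b)$ within $O(\delta)$ of $(a_0,b_0)$. Indeed, integrating the $u$-equation over $[0,1]$ and using $|F_1|\le M_\eps$ on $E_\eps$ gives $a = (u_1-u_0) - \delta\int_0^1 F_1 = a_0 + O(\delta)$; feeding $u(x)=u_0+a x + O(\delta)$ back into the $e$-equation and integrating likewise pins $b=b_0+O(\delta)$. Thus every admissible profile lies in the neighborhood on which the implicit function theorem already asserts uniqueness, upgrading local to global uniqueness in $E_\eps$. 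Finally the convergence rate is read off directly: with $(a,b)=(a_0,b_0)+O(\delta)$, subtracting the $\delta=0$ equations and integrating shows $\|u-\bar u\|_{H^1}+\|e-\bar e\|_{H^1}=O(\delta)=O(\alpha^{-1})$.

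I expect the main obstacle to be the global (as opposed to local) uniqueness in $E_\eps$: the implicit function theorem only controls a neighborhood of the formal root, so the crux is the a priori integral bound confining \emph{all} $E_\eps$-admissible shooting parameters to an $O(\delta)$-ball around $(a_0,b_0)$. The remaining ingredients — recognizing the correct $O(1)$ shooting coordinates so that the problem is a regular rather than singular perturbation, verifying smooth parameter dependence, and checking that the trajectory does not exit $E_\eps$ — are routine once this structure is in place.
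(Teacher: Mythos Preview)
Your argument is correct, but it takes a different route from the paper's proof. The paper rewrites \eqref{prof} in second-order divergence form \eqref{ellipt}, sets $(w,z)=(u-\bar u,\,e-\bar e)$, derives the elliptic perturbation system \eqref{wz} with homogeneous Dirichlet data, and closes by a Banach contraction argument for $(w,z)$ in a small $H^1[0,1]$-ball, with contraction constant $O(\alpha^{-1})$. You instead stay at the first-order ODE level, choose $O(1)$ shooting coordinates $(a,b)=(u'(0),e'(0))$, exhibit the problem as a regular perturbation in $\delta=\alpha^{-1}$, and invoke the implicit function theorem on the endpoint map, supplementing it with the integral a priori bound to upgrade local to global uniqueness in $E_\eps$. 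Your approach is more elementary and makes the role of the formal limit \eqref{lexact} as the $\delta=0$ solution of the shooting problem completely explicit; the paper's elliptic formulation, on the other hand, is what generalizes directly to the abstract setting of Section~\ref{s:large_gen} (cf.\ \eqref{eLode_pert}), where no scalar shooting structure is available. Both proofs tacitly use that $\alpha/\nu$ remains bounded in the limit so that the coupling term $\tfrac{\alpha}{\nu}a(u_0-u)$ in the $e$-equation stays $O(1)$; you note this correctly.
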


\begin{proof}
	Put back into second-order form, \eqref{prof} becomes (cf. \cite[Eq. (2.5)]{paper1})
\ba\label{ellipt}
	\Big(\frac{\alpha}{u_0} u'\Big)' &= (u + \Gamma \frac{e}{u})',
\qquad
	\Big(\frac{\nu}{u_0}e' + \frac{\alpha}{u_0}uu'   \Big)'=  (\Gamma e + e +  \frac{1}{2} u^2)',
\ea

	Writing $u=\bar u+ w$, $e=\bar e+z$, we have, substituting in \eqref{ellipt}, 
	substracting the corresponding equation for $\bar u$, dividing by $\alpha$, the elliptic perturbation
	equations
	\be\label{wz}
	(\frac{1}{u_0} w')' = (O(\alpha^{-1}))',
\qquad
	(\frac{r}{\alpha u_0}e'+ \frac{1}{u_{0}}(\bar u w' + w \bar u' + w w')'=  +  (O(\alpha^{-1}))',
\ee
	with homogeneous data $(w,z)(0)=(0,0)$, $(w,z)(1)=(0,0)$, where estimate $O(\alpha^{-1})$
	(by Sobolev embedding) remains valid so long as $\|w\|_{H^1[0,1]}$ is sufficiently small.

	With this structure, it is straightforward to carry out a contraction mapping argument for 
	$w$ in a sufficiently small ball $B$ in $H^1[0,1]$, considering the right-hand side as input and the
	solution of the block-triangular operator on the left-hand side as image, and showing by energy
	estimates that the resulting operator is contractive on $B$ with contraction constant $O(\alpha^{-1})<1$,
	giving existence and uniqueness by the Banach fixed point theorem of a solution in $B$, i.e., 
	small in $H^1[0,1]$.  Thus, the righthand side of \eqref{wz} is $O(\alpha^{-1})$, and,
	solving, we have $\|(w,z)\|_{H^1[0,1]}=O(\alpha^{-1})$ as well.
	Recalling that $(w,z)$ by definition is the difference between exact and limiting solutions, we are done.
\end{proof}

\br\label{isormk}
The large-viscosity analysis for isentropic gas dynamis is similar but much simpler,
yielding $\bar u$ as the line interpolating between $u(0)$ and $u(1)$.
This is to be compared with the rich small-viscosity behavior depicted in Figure \ref{steadysolisentropic}.
\er

\subsubsection{General case}\label{s:large_gen}
Now, let us consider the general case, under assumption \eqref{solve} and, possibly after a coordinate change,
$B_{22}$ symmetric positive definite: this is possible for example for systems with convex entropy \cite{KSh}.
Adding a variable viscosity $\nu$ in \eqref{conservation_law} gives
	$$
	\partial_{t}f^0(U) + f(U)_{x} = \nu \left(B(U) U_{x} \right)_{x},
	$$
	leading to the steady problem
\be\label{Lode_general}
\nu B_{22}(U) U_{II}' = f_{II}(U) - f_{II}(U_{0}) + \nu \tilde C_2 \text{ , } \quad 
U_{II}(0)=U_{0II} \text{  ,  } f_{I}(U)=f_{I}(U_{0}),
\ee
where $\tilde C_2= B_{22}(U_{0}) C_{2}= B_{22}U_{II}'(0)$, and functions of $U$ are viewed as functions
on $U_{II}$ alone, through the dependence of $U_I$ on $U_{II}$ imposed by relation \eqref{solve}.

As in the previous case, we expect that solutions will converge to the solution of the formal limiting equations
obtained by multiplying by $\nu^{-1}B_{22}(\bar U_{II})^{-1}$ and taking $\nu\to \infty$, or
\be\label{Lformal}
 \bar U_{II}' =  B_{22}(\bar U_{II})^{-1} \tilde C_2 \text{ , }  \quad
 \bar U_{II}(0)=U_{0II} \text{  ,  } f_{I}(\bar U)=f_{I}(U_{0}).
\ee
A first question, partially answered in the next lemma, is whether \eqref{Lformal} admits a solution.

\begin{lemma}\label{formexist}
	Let $0<\beta_0 < B_{22}^{-1} < \beta_1$ on a subset $\tilde{ \mathcal{U}}$ of the domain of definition $\mathcal{U}$ of \eqref{Lode_general},
	and set $\theta= \cos^{-1}(\beta_0/\beta_1)$.
	If the truncated cone $\TT$ of angle $\theta$ based at $U_{0II}$ and centered about the segment
	from $U_{0II}$ to $U_{0II} +(\beta_1/\beta_0)(U_{1II}-U_{0II})$
	is contained in $\tilde{\mathcal{U}}$, then (i) any steady solution $\hat U$
	valued in $\tilde{\mathcal{U}}$ of the formal limiting equation \eqref{Lformal} lies also in $\TT$,
	and (ii) there is at least one steady solution $\hat U$ lying in $\TT$, hence valued in 
	$\tilde{\mathcal{U}}$.
\end{lemma}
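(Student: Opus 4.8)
The plan is to recast the two-point problem \eqref{Lformal} as a fixed-point problem for the shooting parameter $\tilde C_2$, exploiting the single geometric consequence of the spectral bounds $\beta_0 I\le B_{22}^{-1}\le \beta_1 I$: since $B_{22}$ (hence $B_{22}^{-1}$) is symmetric, for any symmetric $N$ with $\beta_0 I\le N\le\beta_1 I$ and any $v\neq0$ one has $\langle Nv,v\rangle\ge\beta_0|v|^2$ and $|Nv|\le\beta_1|v|$, so $\cos\angle(Nv,v)\ge\beta_0/\beta_1=\cos\theta$. Thus $N$ distorts directions by at most the aperture angle $\theta$. Applying this to the velocity field $w:=B_{22}(\bar U_{II})^{-1}\tilde C_2$ of \eqref{Lformal}, every $w(x)$ lies within angle $\theta$ of the fixed vector $\tilde C_2$. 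A solid cone of half-angle $\theta<\pi/2$ is a convex cone (closed under addition and positive scaling, since $\langle v_1+v_2,\hat c\rangle\ge(|v_1|+|v_2|)\cos\theta\ge|v_1+v_2|\cos\theta$), so the displacement $\bar U_{II}(x)-U_{0II}=\int_0^x w\,dx'$, being a limit of nonnegative combinations of vectors in this cone, again lies within angle $\theta$ of $\tilde C_2$, while its length is controlled by $\int_0^1|w|\,dx'\le\beta_1|\tilde C_2|$.

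For (i), let $\hat U$ be any solution of \eqref{Lformal} valued in $\tilde{\mathcal{U}}$ with parameter $\tilde C_2$, and set $d:=U_{1II}-U_{0II}$. Integrating the profile equation over $[0,1]$ gives $d=M\tilde C_2$, where $M:=\int_0^1 B_{22}(\hat U_{II})^{-1}\,dx$ is, as an average of symmetric matrices with spectrum in $[\beta_0,\beta_1]$, itself symmetric with $\beta_0 I\le M\le\beta_1 I$. Hence $\tilde C_2=M^{-1}d$ obeys $|\tilde C_2|\le\beta_0^{-1}|d|$ and lies within angle $\theta$ of $d$ (again $\cos\angle(M^{-1}d,d)\ge\beta_0/\beta_1$). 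Feeding this into the confinement estimate of the first paragraph, $\hat U_{II}(x)-U_{0II}$ lies within angle $\theta$ of $\tilde C_2$, which itself lies within angle $\theta$ of $d$, so the trajectory stays inside the cone about the segment through $d$; and its projection onto $\widehat d$ never exceeds $\beta_1|\tilde C_2|\le(\beta_1/\beta_0)|d|$, i.e.\ it never passes the truncation point $U_{0II}+(\beta_1/\beta_0)d$. Thus $\hat U_{II}$ lies in $\TT$.

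For (ii), I would solve the endpoint condition by a fixed-point argument. The shooting map sends $\tilde C_2$ to $\bar U_{II}(1;\tilde C_2)-U_{0II}=M(\tilde C_2)\tilde C_2$ with $M(\tilde C_2)=\int_0^1 B_{22}(\bar U_{II}(x;\tilde C_2))^{-1}dx$, so $\bar U_{II}(1)=U_{1II}$ is equivalent to $\tilde C_2=T(\tilde C_2):=M(\tilde C_2)^{-1}d$. Consider the set $\mathcal{K}:=\{Nd:\beta_1^{-1}I\le N\le\beta_0^{-1}I\}$, which is convex and compact, being the image of a convex matrix interval under the linear map $N\mapsto Nd$. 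Because $M(\tilde C_2)$ is symmetric with spectrum in $[\beta_0,\beta_1]$, we have $\beta_1^{-1}I\le M(\tilde C_2)^{-1}\le\beta_0^{-1}I$, so $T$ sends every admissible $\tilde C_2$ into $\mathcal{K}$; in particular $T:\mathcal{K}\to\mathcal{K}$. By the confinement estimate, for $\tilde C_2\in\mathcal{K}$ (hence $|\tilde C_2|\le\beta_0^{-1}|d|$ and within angle $\theta$ of $d$) the trajectory remains in the truncated cone about $d$ of the stated aperture and length, hence in $\TT\subseteq\tilde{\mathcal{U}}$ by hypothesis; therefore the initial-value solution exists on all of $[0,1]$, $M(\tilde C_2)$ is well defined, and $T$ is continuous on $\mathcal{K}$ by continuous dependence of ODE solutions on parameters and smoothness of $B_{22}$. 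Brouwer's fixed-point theorem then produces $\tilde C_2^*\in\mathcal{K}$ with $T(\tilde C_2^*)=\tilde C_2^*$, i.e.\ a solution of \eqref{Lformal} attaining $\bar U_{II}(1)=U_{1II}$, which lies in $\TT$ by part (i).

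The main obstacle is exactly the well-definedness of $T$ on all of $\mathcal{K}$: with no a priori control, the solution of \eqref{Lformal} could leave $\tilde{\mathcal{U}}$ (or cease to exist) before $x=1$, making $M(\tilde C_2)$, and hence the shooting map, undefined. This is precisely what the hypothesis $\TT\subseteq\tilde{\mathcal{U}}$ together with the aperture/confinement estimate overcomes, by trapping every relevant trajectory inside $\TT$ for all $x\in[0,1]$. The remaining ingredients---convexity and compactness of $\mathcal{K}$, and continuity of $T$---are routine, the latter following from confinement keeping the solutions in a fixed compact subset of $\tilde{\mathcal{U}}$ where $B_{22}^{-1}$ is uniformly bounded and smooth.
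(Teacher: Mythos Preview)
Your argument for part (i) closely parallels the paper's: both integrate the profile equation to write the displacement as an averaged matrix times $\tilde C_2$, invoke the spectral bounds $\beta_0\le B_{22}^{-1}\le\beta_1$ to control angle and length, and then back-substitute $\tilde C_2=M^{-1}d$ to relate $\tilde C_2$ to $d=U_{1II}-U_{0II}$. For part (ii) you take a genuinely different route. The paper varies $\tilde C_2$ over a narrower truncated cone $\TT_2$ of angle $\theta$ about $d$, uses the confinement estimate to keep the resulting trajectories inside $\TT$, and then argues that the Brouwer degree of the shooting map $\tilde C_2\mapsto U_{II}(1)-U_{0II}$ about the target $d$ is $+1$. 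You instead recast the endpoint condition as a fixed point of $T(\tilde C_2)=M(\tilde C_2)^{-1}d$ on the convex compact set $\mathcal{K}=\{Nd:\beta_1^{-1}I\le N\le\beta_0^{-1}I\}$ and apply Brouwer's fixed-point theorem directly. Your formulation is a bit more self-contained---no winding/degree computation is needed, and $\mathcal{K}$ is a natural convex domain tailored to the spectral bounds---while the paper's degree argument would more readily extend to counting or index statements. Both rest on the same confinement estimate to guarantee the shooting map is well defined on the relevant parameter set.

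One caveat, shared with the paper's own proof: the confinement step shows the trajectory lies within angle $\theta$ of $\tilde C_2$, and separately that $\tilde C_2$ lies within angle $\theta$ of $d$; composing gives only a $2\theta$ cone about $d$, not $\theta$. So strictly speaking neither argument pins the trajectory inside a cone of half-angle $\theta$ about $d$ as the statement of $\TT$ suggests. This is a minor geometric imprecision in the lemma (arguably the aperture of $\TT$ should be $2\theta$), not a defect particular to your approach.
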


\begin{proof}
	By $\beta_0<B_{22}^{-1}<\beta_1$, for any vector $V$ the	
angle $\theta$ between $V$ and $B_{22}^{-1}V$ satisfies
$$\cos\theta =\frac{ \langle V, B_{22}^{-1} V\rangle}{ |V||B_{22}^{-1}V| }\geq \frac {\beta_0}{\beta_1},$$
	with length of $B_{22}^{-1} V$ lying between $\beta_0 |V|$ and $\beta_1 |V|$.

	Likewise, so long as $U_{II}$ remains in $\tilde{ \mathcal{U}}$, for $0\leq x\leq 1$
	the vector 
	$$
	U_{II}(x)-U_{II}(0)=\Big(\int_0^x B_{22}^{-1}(U_{II}(y))dy\Big) \tilde C_2= B_{ave} (x \tilde C_2),
	$$
	by the corresponding bounds $\beta_0<B_{ave}<\beta_1$,
	lies within the same angle $\theta$ of $\tilde C_2$, with length between $x\beta_0 |\tilde C_2|$
	and $x\beta_1 |\tilde C_2|$.
	If $U_{II}$ is a solution of \eqref{Lode_general}, i.e., $U_{II}(0)=U_{0II}$ and $U_{II}(1)=U_{122}$,
	then this implies that $\tilde C_2$ is within angle $\theta$ of $U_{1II}-U_{0II}$, and thus $U_{II}(x)$,
	lying within angle $\theta$ of $\tilde C_2$, lies in the truncated cone $\TT$.

	On the other hand, for $\tilde C_2$ lying within the narrower truncated cone $\TT_2$ of angle $\theta$ 
	around $V=U_{1II}-U_{0II}$, and length between $|V|/\beta_1$ and $|V|/\beta_2$, we have by the same 
	estimates that $U_{II}$ stays within the interior of $\TT$ for $0\leq x\leq 1$,
	hence $\Psi(\tilde C_2)=U_{II}(1)-U_{0II}$ is well-defined.
	Varying $C_2$ around the boundary of $\TT_2$, we find by the same angle estimate that the 
	degree of $\Psi$ about $0$ is $+1$, giving existence of at least one steady profile in $\TT$.
\end{proof}

\br
By a constant change of coordinates, we may take
$$
B_{22}(U_{II})\to \tilde B_{22}(U_{II})= 
	B_{22}(U_{0II})^{-1/2} B_{22}(U_{II}) B_{22}(U_{0II})^{-1/2},
$$
in particular $\tilde B_{22}(U_{0II})=\Id$, to improve the condition number $\beta_1/\beta_0$.
For $B_{22}\equiv \const$, as for gas dynamics, this yields $\tilde B_{22}\equiv \Id$, 
giving an exact solution $U_{II}(1)=\tilde B_{22}^{-1}\tilde C_2$.
In general, we do not see that there is necessarily a solution of \eqref{Lode_general} lying
in $\tilde {\mathcal{U}}$, even ``nice'' domains arising in applications,
 nor that solutions of \eqref{Lode_general} must necessarily be unique.
\er

Mimicking our treatment of the gas dynamical case, we may write \eqref{Lode_general} in elliptic form 
\be\label{eLode_general}
(\nu B_{22}(U) U_{II}')' = (f_{II}(U))' \text{ , } \quad 
U_{II}(0)=U_{0II} \text{  ,  } f_{I}(U)=f_{I}(U_{0}),
\ee
then, defining $W=\hat U_{II}- \bar U_{II}$ to be the difference between exact and limiting solutions,
rewrite as an elliptic perturbation system
\be\label{eLode_pert}
( B_{22}(\bar U) W'
+ (dB_{22}(\bar U)W) \bar U_{II}')' = 
 ((dB_{22}(\bar U)W) W')'+ (O(\nu^{-1}) )',
\ee
with Dirichlet boundary conditions $W(0)=W(1)=0$.
If, as in the gas-dynamical case, the operator on the lefthand side is uniformly elliptic, then
obtain an existence/convergence result as in Proposition \ref{Lgas}.
by the same sort of contraction mapping argument used there.
However, we do not see in general why this should be true, {\it except for small data 
$|\bar U'|\ll 1$.}


\scriptsize
\bibliographystyle{alpha}
\bibliography{biblio}

@article{paper1,
	author = {Barker, B. and Melinand, B. and Zumbrun, Z.},
	date-added = {2021-11-30 22:13:02 -0700},
	date-modified = {2021-11-30 22:14:07 -0700},
	journal = {Arxiv: 1911.06691},
	title = { Existence and stability of steady noncharacteristic solutions on an interval of full compressible Navier--Stokes equations},
	year = {2021}}

@article{M1,
	author = {M{\'e}tivier, G.},
	date-added = {2021-11-22 21:36:15 -0700},
	date-modified = {2021-11-22 21:37:11 -0700},
	journal = {Journ{\'e}es {\'E}quations aux d{\'e}riv{\'e}es partielles},
	pages = {1--82},
	title = {Lecture notes : Stability of Noncharacteristic Viscous Boundary Layers},
	year = {2006}}

@article{CP10,
	author = {Chen, G.-Q. G. and Perepelitsa, M.},
	date-added = {2021-11-22 21:31:48 -0700},
	date-modified = {2021-11-25 22:31:06 -0700},
	doi = {10.1002/cpa.20332},
	journal = {Communications on Pure and Applied Mathematics},
	month = aug,
	number = {11},
	pages = {1469--1504},
	publisher = {Wiley},
	title = {Vanishing viscosity limit of the {N}avier-{S}tokes equations to the euler equations for compressible fluid flow},
	url = {https://doi.org/10.1002/cpa.20332},
	volume = {63},
	year = {2010},
	Bdsk-Url-1 = {https://doi.org/10.1002/cpa.20332}}

@article{S86,
	author = {D. Serre},
	date-added = {2021-11-22 21:29:31 -0700},
	date-modified = {2021-11-22 21:30:26 -0700},
	journal = {J. Math. Pures Appl},
	number = {4},
	pages = {423-468},
	title = {La compacit\'e par compensation pour les syst\`emes hyperboliques non lin\'eaires de deux \'equation},
	volume = {65},
	year = {1986}}

@article{D83,
	author = {DiPerna, R.J.},
	date-added = {2021-11-22 21:28:45 -0700},
	date-modified = {2021-11-22 21:28:52 -0700},
	doi = {10.1007/bf01206047},
	journal = {Communications in Mathematical Physics},
	month = mar,
	number = {1},
	pages = {1--30},
	publisher = {Springer Science and Business Media {LLC}},
	title = {Convergence of the viscosity method for isentropic gas dynamics},
	url = {https://doi.org/10.1007/bf01206047},
	volume = {91},
	year = {1983},
	Bdsk-Url-1 = {https://doi.org/10.1007/bf01206047}}

@article{DL88,
	author = {Dubois, F. and Le Floch, P.},
	date-added = {2021-11-22 21:27:16 -0700},
	date-modified = {2021-11-22 21:27:26 -0700},
	doi = {10.1016/0022-0396(88)90040-x},
	journal = {Journal of Differential Equations},
	month = jan,
	number = {1},
	pages = {93--122},
	publisher = {Elsevier {BV}},
	title = {Boundary conditions for nonlinear hyperbolic systems of conservation laws},
	url = {https://doi.org/10.1016/0022-0396(88)90040-x},
	volume = {71},
	year = {1988},
	Bdsk-Url-1 = {https://doi.org/10.1016/0022-0396(88)90040-x}}

@article{G96,
	author = {Gisclon, M.},
	date-added = {2021-11-22 21:23:38 -0700},
	date-modified = {2021-11-22 21:24:50 -0700},
	journal = {J. Math. Pures Appl.},
	month = {September},
	number = {5},
	pages = {485--508},
	title = {\'Etude des conditions aux limites pour un syst\`eme strictement hyperbolique, via l'approximation parabolique. (French) [Study of boundary conditions for a strictly hyperbolic system via parabolic approximation]},
	volume = {75},
	year = {1996}}

@article{GS94,
	author = {Gisclon, M. and Serre, D.},
	date-added = {2021-11-22 21:18:54 -0700},
	date-modified = {2021-11-22 21:21:52 -0700},
	journal = {C. R. Acad. Sci. Paris S\'er. I Math.},
	number = {4},
	pages = {377--382},
	school = {Lyon 1},
	title = {Etude des conditions aux limites pour des syst{\`e}mes strictement hyperboliques, via l'approximation parabolique (French) [Study of boundary conditions for a strictly hyperbolic system via parabolic approximation]},
	url = {http://www.theses.fr/1994LYO10294},
	volume = {319},
	year = {1994},
	Bdsk-Url-1 = {http://www.theses.fr/1994LYO10294}}

@article{GMWZ10,
	author = {Gu{\`{e}}s, O. and M{\'{e}}tivier, G. and Williams M. and Zumbrun, K.},
	date-added = {2021-11-22 21:16:49 -0700},
	date-modified = {2021-11-22 21:17:17 -0700},
	doi = {10.1007/s00205-009-0277-y},
	journal = {Archive for Rational Mechanics and Analysis},
	month = nov,
	number = {1},
	pages = {1--87},
	publisher = {Springer Science and Business Media {LLC}},
	title = {Existence and Stability of Noncharacteristic Boundary Layers for the Compressible Navier{\textendash}Stokes and Viscous {MHD} Equations},
	url = {https://doi.org/10.1007/s00205-009-0277-y},
	volume = {197},
	year = {2009},
	Bdsk-Url-1 = {https://doi.org/10.1007/s00205-009-0277-y}}

@article{Brouwer1911,
	author = {Brouwer, L. E. J. },
	date-added = {2021-10-11 16:56:39 -0600},
	date-modified = {2021-11-30 22:22:33 -0700},
	doi = {10.1007/bf01456931},
	journal = {Mathematische Annalen},
	month = mar,
	number = {1},
	pages = {97--115},
	publisher = {Springer Science and Business Media {LLC}},
	title = {\"{U}ber {A}bbildung von {M}annigfaltigkeiten},
	url = {https://doi.org/10.1007/bf01456931},
	volume = {71},
	year = {1911},
	Bdsk-Url-1 = {https://doi.org/10.1007/bf01456931}}

@article{AGJ,
	author = {Alexander, J. and Gardner, R. and Jones, C.},
	fjournal = {Journal f\"{u}r die Reine und Angewandte Mathematik. [Crelle's Journal]},
	issn = {0075-4102},
	journal = {J. Reine Angew. Math.},
	mrclass = {58D25 (35B35 35K57 58C40)},
	mrnumber = {1068805},
	mrreviewer = {Norman Dancer},
	pages = {167--212},
	title = {A topological invariant arising in the stability analysis of travelling waves},
	volume = {410},
	year = {1990}}

@article{GMWZ,
	author = {Gu\`es, O. and M\'{e}tivier, G. and Williams, M. and Zumbrun, K.},
	doi = {10.1007/s00205-004-0342-5},
	fjournal = {Archive for Rational Mechanics and Analysis},
	issn = {0003-9527},
	journal = {Arch. Ration. Mech. Anal.},
	mrclass = {35L67 (35B35 35L65 76L05)},
	mrnumber = {2118476},
	mrreviewer = {Denis Serre},
	number = {2},
	pages = {151--244},
	title = {Existence and stability of multidimensional shock fronts in the vanishing viscosity limit},
	url = {https://doi.org/10.1007/s00205-004-0342-5},
	volume = {175},
	year = {2005},
	Bdsk-Url-1 = {https://doi.org/10.1007/s00205-004-0342-5}}

@article{GZ,
	author = {Gardner, R. A. and Zumbrun, K.},
	doi = {10.1002/(SICI)1097-0312(199807)51:7<797::AID-CPA3>3.0.CO;2-1},
	fjournal = {Communications on Pure and Applied Mathematics},
	issn = {0010-3640},
	journal = {Comm. Pure Appl. Math.},
	mrclass = {35L67 (34C99 35B35 35L65 58F39)},
	mrnumber = {1617251},
	mrreviewer = {Roberto Natalini},
	number = {7},
	pages = {797--855},
	title = {The gap lemma and geometric criteria for instability of viscous shock profiles},
	url = {https://doi.org/10.1002/(SICI)1097-0312(199807)51:7<797::AID-CPA3>3.0.CO;2-1},
	volume = {51},
	year = {1998},
	Bdsk-Url-1 = {https://doi.org/10.1002/(SICI)1097-0312(199807)51:7%3C797::AID-CPA3%3E3.0.CO;2-1}}

@article{Kaw_thesis,
	author = {Kawashima, S.},
	journal = {Thesis, Kyoto University},
	title = {Systems of a hyperbolic--parabolic composite type, with applications to the equations of magnetohydrodynamics},
	year = {1983}}

@article{KSh,
	author = {Kawashima, D. and Shizuta, Y.},
	doi = {10.2748/tmj/1178227986},
	fjournal = {The Tohoku Mathematical Journal. Second Series},
	issn = {0040-8735},
	journal = {Tohoku Math. J. (2)},
	mrclass = {35L65 (35K99 35Q20)},
	mrnumber = {957056},
	mrreviewer = {Maria Elena Schonbek},
	number = {3},
	pages = {449--464},
	title = {On the normal form of the symmetric hyperbolic-parabolic systems associated with the conservation laws},
	url = {https://doi.org/10.2748/tmj/1178227986},
	volume = {40},
	year = {1988},
	Bdsk-Url-1 = {https://doi.org/10.2748/tmj/1178227986}}

@article{KK,
	author = {Kweon, J. R. and Kellogg, R. B.},
	doi = {10.1137/S0036141095284254},
	fjournal = {SIAM Journal on Mathematical Analysis},
	issn = {0036-1410},
	journal = {SIAM J. Math. Anal.},
	mrclass = {35Q30 (76N10)},
	mrnumber = {1427729},
	number = {1},
	pages = {94--108},
	title = {Compressible {N}avier-{S}tokes equations in a bounded domain with inflow boundary condition},
	url = {https://doi.org/10.1137/S0036141095284254},
	volume = {28},
	year = {1997},
	Bdsk-Url-1 = {https://doi.org/10.1137/S0036141095284254}}

@book{La,
	author = {Lax, P. D.},
	mrclass = {35L65},
	mrnumber = {0350216},
	mrreviewer = {J. Smoller},
	note = {Conference Board of the Mathematical Sciences Regional Conference Series in Applied Mathematics, No. 11},
	pages = {v+48},
	publisher = {Society for Industrial and Applied Mathematics, Philadelphia, Pa.},
	title = {Hyperbolic systems of conservation laws and the mathematical theory of shock waves},
	year = {1973}}

@article{MaZ,
	author = {Mascia, C. and Zumbrun, K.},
	doi = {10.1007/s00205-003-0258-5},
	fjournal = {Archive for Rational Mechanics and Analysis},
	issn = {0003-9527},
	journal = {Arch. Ration. Mech. Anal.},
	mrclass = {35L60 (35A08 35B45 35L65 35L67)},
	mrnumber = {2004135},
	mrreviewer = {Roberto Natalini},
	number = {3},
	pages = {177--263},
	title = {Pointwise {G}reen function bounds for shock profiles of systems with real viscosity},
	url = {https://doi.org/10.1007/s00205-003-0258-5},
	volume = {169},
	year = {2003},
	Bdsk-Url-1 = {https://doi.org/10.1007/s00205-003-0258-5}}

@article{MelZ,
	author = {B. Melinand and K. Zumbrun},
	doi = {https://doi.org/10.1016/j.physd.2019.01.006},
	issn = {0167-2789},
	journal = {Physica D: Nonlinear Phenomena},
	keywords = {Stability of steady waves, Bounded domains, Isentropic gas, Evans function},
	pages = {16 - 25},
	title = {Existence and stability of steady compressible {N}avier-{S}tokes solutions on a finite interval with noncharacteristic boundary conditions},
	url = {http://www.sciencedirect.com/science/article/pii/S0167278917305912},
	volume = {394},
	year = {2019},
	Bdsk-Url-1 = {http://www.sciencedirect.com/science/article/pii/S0167278917305912},
	Bdsk-Url-2 = {https://doi.org/10.1016/j.physd.2019.01.006}}

@book{M2,
	author = {M{\'e}tivier, G.},
	doi = {10.1007/978-0-8176-8214-9},
	isbn = {0-8176-3390-1},
	mrclass = {35-02 (35B25 35K65 35L60 35L67)},
	mrnumber = {2151414},
	mrreviewer = {Fr\'{e}d\'{e}ric Rousset},
	note = {Theory, stability analysis, and applications},
	pages = {xxii+194},
	publisher = {Birkh\"{a}user Boston, Inc., Boston, MA},
	title = {Small viscosity and boundary layer methods},
	series = {Modeling and Simulation in Science, Engineering and Technology},
	url = {https://doi.org/10.1007/978-0-8176-8214-9},
	year = {2004},
	Bdsk-Url-1 = {https://doi.org/10.1007/978-0-8176-8214-9}}

@article{R,
	author = {Rousset, F.},
	fjournal = {Asymptotic Analysis},
	issn = {0921-7134},
	journal = {Asymptot. Anal.},
	mrclass = {35L65 (35B35 76D10 76E05)},
	mrnumber = {1844545},
	number = {3-4},
	pages = {285--306},
	title = {Inviscid boundary conditions and stability of viscous boundary layers},
	volume = {26},
	year = {2001}}

@article{SZ,
	author = {Serre, D. and Zumbrun, K.},
	doi = {10.1007/s002200100486},
	fjournal = {Communications in Mathematical Physics},
	issn = {0010-3616},
	journal = {Comm. Math. Phys.},
	mrclass = {35B35 (35B25 35L65 76N20)},
	mrnumber = {1845324},
	number = {2},
	pages = {267--292},
	title = {Boundary layer stability in real vanishing viscosity limit},
	url = {https://doi.org/10.1007/s002200100486},
	volume = {221},
	year = {2001},
	Bdsk-Url-1 = {https://doi.org/10.1007/s002200100486}}

@book{Sm,
	author = {Smoller, J.},
	doi = {10.1007/978-1-4612-0873-0},
	edition = {Second},
	isbn = {0-387-94259-9},
	mrclass = {35-02 (35K57 35L67 58E07)},
	mrnumber = {1301779},
	pages = {xxiv+632},
	publisher = {Springer-Verlag, New York},
	series = {Grundlehren der Mathematischen Wissenschaften [Fundamental Principles of Mathematical Sciences]},
	title = {Shock waves and reaction-diffusion equations},
	url = {https://doi.org/10.1007/978-1-4612-0873-0},
	volume = {258},
	year = {1994},
	Bdsk-Url-1 = {https://doi.org/10.1007/978-1-4612-0873-0}}

@article{Z1,
	author = {Zumbrun, K.},
	doi = {10.1090/S0002-9947-2010-05213-4},
	fjournal = {Transactions of the American Mathematical Society},
	issn = {0002-9947},
	journal = {Trans. Amer. Math. Soc.},
	mrclass = {35Q35 (35B35 35L65 76L05 76N15)},
	mrnumber = {2678980},
	mrreviewer = {Rapha\"{e}l Danchin},
	number = {12},
	pages = {6397--6424},
	title = {Stability of noncharacteristic boundary layers in the standing-shock limit},
	url = {https://doi.org/10.1090/S0002-9947-2010-05213-4},
	volume = {362},
	year = {2010},
	Bdsk-Url-1 = {https://doi.org/10.1090/S0002-9947-2010-05213-4}}

@incollection{Z3,
	author = {Zumbrun, K.},
	booktitle = {Handbook of mathematical fluid dynamics. {V}ol. {III}},
	mrclass = {35Q35 (35-02 35B35 35L67 35Q30 76L05 76N10)},
	mrnumber = {2099037},
	mrreviewer = {Rapha\"{e}l Danchin},
	note = {With an appendix by Helge Kristian Jenssen and Gregory Lyng},
	pages = {311--533},
	publisher = {North-Holland, Amsterdam},
	title = {Stability of large-amplitude shock waves of compressible {N}avier-{S}tokes equations},
	year = {2004}}

@article{Z4,
	author = {Zumbrun, K.},
	doi = {10.1007/s00205-010-0342-6},
	fjournal = {Archive for Rational Mechanics and Analysis},
	issn = {0003-9527},
	journal = {Arch. Ration. Mech. Anal.},
	mrclass = {35K40 (35B35 35K57 76L05 76V05 80A25)},
	mrnumber = {2781588},
	mrreviewer = {Adrian Muntean},
	number = {1},
	pages = {141--182},
	title = {Stability of detonation profiles in the {ZND} limit},
	url = {https://doi.org/10.1007/s00205-010-0342-6},
	volume = {200},
	year = {2011},
	Bdsk-Url-1 = {https://doi.org/10.1007/s00205-010-0342-6}}

@article{ZS,
	author = {Zumbrun, K. and Serre, D.},
	doi = {10.1512/iumj.1999.48.1765},
	fjournal = {Indiana University Mathematics Journal},
	issn = {0022-2518},
	journal = {Indiana Univ. Math. J.},
	mrclass = {35L67 (35B35 35L50 35L65 76L05)},
	mrnumber = {1736972},
	mrreviewer = {Fumioki Asakura},
	number = {3},
	pages = {937--992},
	title = {Viscous and inviscid stability of multidimensional planar shock fronts},
	url = {https://doi.org/10.1512/iumj.1999.48.1765},
	volume = {48},
	year = {1999},
	Bdsk-Url-1 = {https://doi.org/10.1512/iumj.1999.48.1765}}

@book{Hi1976,
	author = {Hirsch, M. W.},
	isbn = {0-387-90148-5},
	mrclass = {57-01 (58-01)},
	mrnumber = {1336822},
	note = {Corrected reprint of the 1976 original},
	pages = {x+222},
	publisher = {Springer-Verlag, New York},
	series = {Graduate Texts in Mathematics},
	title = {Differential topology},
	volume = {33},
	year = {1994}}

@book{Mil1965,
	author = {Milnor, J.W.},
	mrclass = {57.10},
	mrnumber = {0226651},
	mrreviewer = {W. S. Piper},
	note = {Based on notes by David W. Weaver},
	pages = {ix+65},
	publisher = {The University Press of Virginia, Charlottesville, Va.},
	title = {Topology from the differentiable viewpoint},
	year = {1965}}

@book{Pra2006,
	author = {Prasolov, V. V.},
	doi = {10.1090/gsm/074},
	isbn = {0-8218-3809-1},
	mrclass = {57-01},
	mrnumber = {2233951},
	mrreviewer = {Nikolai N. Saveliev},
	note = {Translated from the 2004 Russian original by Olga Sipacheva},
	pages = {xii+331},
	publisher = {American Mathematical Society, Providence, RI},
	series = {Graduate Studies in Mathematics},
	title = {Elements of combinatorial and differential topology},
	volume = {74},
	year = {2006},
	Bdsk-Url-1 = {https://doi.org/10.1090/gsm/074}}

@book{DincaMawhin2021,
	author = {Dinca, G. and Mawhin, J.},
	doi = {10.1007/978-3-030-63230-4},
	isbn = {978-3-030-63230-4},
	pages = {XIX, 447},
	publisher = {Birkh{\"a}user Basel},
	series = {Progress in Nonlinear Differential Equations and Their Applications},
	title = {Brouwer degree, the Core of Nonlinear Analysis},
	volume = {95},
	year = {2021},
	Bdsk-Url-1 = {https://doi.org/10.1007/978-3-030-63230-4}}

@article {Da,
    AUTHOR = {Dafermos, C. M.},
     TITLE = {Regularity and large time behaviour of solutions of a
              conservation law without convexity},
   JOURNAL = {Proc. Roy. Soc. Edinburgh Sect. A},
  FJOURNAL = {Proceedings of the Royal Society of Edinburgh. Section A.
              Mathematics},
    VOLUME = {99},
      YEAR = {1985},
    NUMBER = {3-4},
     PAGES = {201--239},
      ISSN = {0308-2105},
   MRCLASS = {35L65 (35B99)},
  MRNUMBER = {785530},
MRREVIEWER = {Reza Malek-Madani},
       DOI = {10.1017/S0308210500014256},
       URL = {https://doi-org-s.proxy.bu.dauphine.fr/10.1017/S0308210500014256},
}

@article {L,
    AUTHOR = {Liu, Tai Ping},
     TITLE = {Admissible solutions of hyperbolic conservation laws},
   JOURNAL = {Mem. Amer. Math. Soc.},
  FJOURNAL = {Memoirs of the American Mathematical Society},
    VOLUME = {30},
      YEAR = {1981},
    NUMBER = {240},
     PAGES = {iv+78},
      ISSN = {0065-9266},
   MRCLASS = {35L65 (76N15)},
  MRNUMBER = {603391},
MRREVIEWER = {A. Elcrat},
       DOI = {10.1090/memo/0240},
       URL = {https://doi-org-s.proxy.bu.dauphine.fr/10.1090/memo/0240},
}

@article {kreiss_burger_interval,
    AUTHOR = {Kreiss, G. and Kreiss, H.-O.},
     TITLE = {Convergence to steady state of solutions of {B}urgers'
              equation},
   JOURNAL = {Appl. Numer. Math.},
  FJOURNAL = {Applied Numerical Mathematics. An IMACS Journal},
    VOLUME = {2},
      YEAR = {1986},
    NUMBER = {3-5},
     PAGES = {161--179},
      ISSN = {0168-9274},
   MRCLASS = {65M10 (65N10)},
  MRNUMBER = {863984},
       DOI = {10.1016/0168-9274(86)90026-7},
       URL = {https://doi.org/10.1016/0168-9274(86)90026-7},
}
\normalsize
\end{document}